\documentclass[12pt,reqno, oneside]{amsart}

\usepackage[utf8]{inputenc}
\usepackage[normalem]{ulem}
\usepackage{textcomp}
\usepackage[english]{babel}
\usepackage{amsmath, amsfonts, amssymb,amsthm}
\usepackage{bbold}
\usepackage[dvipsnames]{xcolor}
\usepackage{upgreek}
\usepackage{float}
\usepackage{changepage}
\usepackage{array}
\usepackage{enumerate}
\usepackage{youngtab}
\usepackage{hyperref}
\usepackage{enumitem}

\usepackage{verbatim}
\usepackage{tikz-cd}
\usepackage{tikz}
\usepackage{tikz-cd}
\usetikzlibrary{decorations.shapes}
\usepackage{fancyhdr}
\usepackage{caption}

\usepackage{mathtools}
\mathtoolsset{showonlyrefs}

\usepackage{tikz}
\usetikzlibrary{ decorations.markings}
\usetikzlibrary{calc, matrix, arrows,decorations.pathmorphing, positioning}
\numberwithin{equation}{section}

\newtheorem{theorem}{Theorem}[section]
\newtheorem{lemma}[theorem]{Lemma}
\newtheorem{prop}[theorem]{Proposition}

\newtheorem{cor}[theorem]{Corollary}
\newtheorem{defi}[theorem]{Definition}
\newtheorem{remark}[theorem]{Remark}
\newtheorem{example}[theorem]{Example}

\newcommand{\C}{\mathbb{C}}
\newcommand{\Z}{\mathbb{Z}}
\newcommand{\Zp}{\mathbb{Z}_{\geq 0}}

\newcommand{\td}{\tilde}
\newcommand{\ci}{\circ}

\newcommand{\sL}{\mathfrak{sl}_2}
\newcommand{\sLh}{\widehat{\mathfrak{sl}}_2}

\newcommand{\Ker}{\mathrm{Ker}}
\newcommand{\h}{\mathfrak{h}}

\newcommand{\bs}{\boldsymbol}

\newcommand{\Wr}{\mathop{\mathrm{Wr}}}

\newcommand{\pd}{\partial}

\newcommand{\al}{\alpha}

\newcommand{\la}{\lambda}

\newcommand{\be}{\begin{equation}}
\newcommand{\ee}{\end{equation}}
\newcommand{\benonum}{\begin{equation*}}
\newcommand{\eenonum}{\end{equation*}}
\newcommand{\bsme}{\begin{subequations}}
\newcommand{\ese}{\end{subequations}}
\newcommand{\ba}{\begin{aligned}}
\newcommand{\ea}{\end{aligned}}
\newcommand{\beanonum}{\begin{align*}}
\newcommand{\eeanonum}{\end{align*}}

\DeclareMathSymbol{\shortminus}{\mathbin}{AMSa}{"39}

\newlength\mylen
\makeatother
\title{Monodromy free Schr\"odinger operators and  
affine $\sL$ master functions.}

\begin{document}
\author{Andrei Grigorev and Evgeny Mukhin} 
\address{EM: Department of Mathematical Sciences,
Indiana University Indianapolis,
402 N. Blackford St., LD 270, 
Indianapolis, IN 46202, USA;}
\email{emukhin@iu.edu} 

\address{AG: Department of Mathematical Sciences,
Indiana University Indianapolis,
402 N. Blackford St., LD 270, 
Indianapolis, IN 46202, USA
;}
\email{aagrigor@iu.edu, andrei.al.grigorev@gmail.com}
\begin{abstract}
Given a non-zero polynomial $P(x)$, we study Fuchsian differential operators of the form $L=\partial_x^2-u(x)$ such that for all $\la\in\C$ the operator $L+\la P(x)$ is monodromy free. We prove that all such operators are obtained from populations of critical points of $\sLh$ master functions.
Moreover, we show that the reproduction procedure  of critical points corresponds to a Darboux transformation of operator $P^{-1}(x)L$. As a result, we obtain a classification of all operators $L$ with such properties in the case of $P(x)=x^k$.

\end{abstract}
\maketitle
\section{Introduction}
In the classical works, \cite{AM78}, \cite{AMM77}, the rational tau functions of the KdV equations were described by the Fuchsian operators of the form $L=\partial^2-u(x)$, where $u(x)$ is a rational function, with the property that $L+\la$ is monodromy free for all $\la\in\C$. Moreover, it was shown in \cite{DG86} that all such operators are obtained from $\partial^2$ by a finite sequence of Darboux transformations.

The operators of the form $L=\partial^2-x^2-u(x)$, where $u(x)$ is a rational function decreasing at $\infty$, with the property that $L+\la$  is monodromy free for all $\la\in\C$ were classified in \cite{O99}. All such operators are obtained from $\partial^2-x^2$ by a finite sequence of Darboux transformations. These operators were used to study Calogero systems with quadratically grown  potentials, see \cite{FV94}. There is no known classification of similar operators with $x^2$ replaced by $x^k$, $k\neq 0,2$, see \cite{GV09}.

The interest to operators with similar properties spiked dramatically after the  the seminal work \cite{BLZ96}.  In the study of quantum KdV flows, \cite{BLZ96} introduced the operators of the form $\partial^2_x-u(x)+\la x^{\bar k}$, where $u(x)$ is a rational function, $\bar k\in\C$, with the property that for all $\la\in\C$ the monodromy around poles of $u(x)$ is trivial. Such operators became known as Schr\"odinger operators with monster potential. The correspondence between generating function of eigenvalues of quantum KdV Hamiltonians and connection functions (in variable $\lambda$) of the operators with monster potential is named by the ODE/IM correspondence.

In \cite{MMR25} it was argued that the connection functions of interest in the operators with monster potential coincide with the connection functions of another class of operators of the form $\partial^2-u(x)+\lambda x^k(x-1)$ which have no monodromy for all $\lambda$, and which are Fuchsian for $\lambda=0$. 

In this paper, we fix a non-zero polynomial $P(x)$ and consider Fuchsian operators of the form $L=\partial^2-u(x)$, such that $L+\lambda P(x)$ are monodromy free for all $\lambda$. (More generally, we actually allow monodromy to equal $\pm \mathbb{1}$.) We call such operators $\lambda$-monodromy free.

\medskip

Our main tool is the populations of critical points of master functions. From that point of view, the case of \cite{AM78} corresponds to the trivial representation of $\sLh$, the case of \cite{O99} to the Fock module of $\widehat{\mathfrak{gl}}_1$, the case of \cite{MMR25} to a tensor product of $\sLh$ vacuum level one module at $x=0$ with a Verma module of level $k\in \C$ at $x=1$, while the case of the present paper corresponds to an arbitrary tensor product of $\sLh$ integrable representations located at zeroes of $P(x)$.

We sketch the relation between $\la$-monodromy free operators and populations of critical points more precisely. Let $T_0(x), T_1(x)$ be non-zero polynomials, and let $P=T_0T_1$. Then  zeroes of a pair of polynomials $(y_0(x), y_1(x))$ is an $\sLh$ critical point (a.k.a. a solution of the Gaudin Bethe ansatz equations, \eqref{eq:Bethe_ansatz})  if there exist polynomials $\tilde y_0,$ $\tilde y_1$ such that  
\begin{equation} \label{eq:introWr}
\Wr(y_0,\tilde y_0)=T_0 y_1^2, \qquad \Wr(y_1,\tilde y_1)=T_0 y_1^2,
\end{equation}
and if pairs $(y_0,y_1)$ , $(y_0,T_0)$, $(y_1,T_1)$ have no common roots. The populations are connected components of the closure of the variety of critical points and consist of pairs $(y_0,y_1)$ satisfying \eqref{eq:introWr} but some roots could be common. The changes $(y_0(x),y_1(x))\to (\tilde y_0(x),y_1(x)) $ and $(y_0(x),y_1(x))\to (y_0(x),\tilde y_1(x))$ preserve the populations and are called reproductions, see \cite{MV04} and Section \ref{sec:populations}.
 
Given $(y_0,y_1)$ in a population, the operators
$$L_0=P^{-1}\Big(\partial +\ln'\Big(\frac{y_0^2}{T_0^{1/2}y_1^2}\Big)\Big)\Big(\partial -\ln'\Big(\frac{y_0^2}{T_0^{1/2}y_1^2}\Big)\Big),\qquad 
L_1=P^{-1}\Big(\partial +\ln'\Big(\frac{y_1^2}{T_1^{1/2}y_0^2}\Big)\Big)\Big(\partial -\ln'(\frac{y_1^2}{T_1^{1/2}y_0^2})\Big)
$$
are $\lambda$-monodromy free, see Proposition \ref{prop:la_mf_fromcp}. Conversely, we prove that all $\lambda$-monodromy free operators are obtained this way, and this relation is almost a bijection, see Theorem \ref{thm:DO_to_populations}.

Such a correspondence allows us to transfer the knowledge about populations of critical points to the $\la$-monodromy free operators.  

\medskip 

Factorize a $\la$-monodromy free operator $L$ in the linear factors, $L=D_1D_2$, where $D_i=P^{-1/2}\partial+g_i(x)$. Then we call the transformation $L=D_1D_2\to \tilde L=D_2D_1$ the Darboux transformation. We show that the Darboux transformation of a $\la$-monodromy free operator is $\la$-monodromy free and that it corresponds to the reproduction, namely $L_0(y_0,y_1)\to L_1(\tilde y_0, y_1)$ and    $L_1(y_0,y_1)\to L_0(y_0, \tilde y_1)$ are Darboux transformations.

In the case of $T_0=x^{2l}, T_1=x^{2m}$, $2l,2m\in\Z_{\geq 0}$, the population of critical points is unique and it is generated by the trivial critical point $(y_0(x),y_1(x))=(1,1)$, see \cite{MV14}. 
As the result we obtain that $\la$-monodromy free operators with $P=x^k$, $k\in\Z_{>0}$ are exactly the operators obtained by a finite sequence of Darboux transformations from the operators $$x^{-k}\Big(\partial^2-\frac{m(m+1)}{x^2}\Big),\qquad m=0,\frac12,1,\frac 32,\dots, \frac12{\left\lfloor{\frac{k}{2}}\right\rfloor},$$ see Theorem \ref{thm:op_clas}. 

The case $k=0$ corresponds to the Adler--Moser case \cite{AM78} which is related to the KdV hierarchy. We wonder if other populations of critical points also correspond to integrable hierarchies. We give an affirmative answer in the case of $T_0=T_1$ and the population generated by the trivial critical point. This case corresponds to a simple change of variables in the KdV hierarchy, see Section \ref{sec:KdV}.

\medskip

The Adler--Moser case has been generalized to the case of differential operators of order $n$, see\cite{VW14}. That corresponds to the trivial $\widehat{\mathfrak{sl}}_n$-module.
We plan to address the case of tensor products of integrable $\widehat{\mathfrak{sl}}_n$-modules in a future publication. 

\medskip

The paper is constructed as follows. We discuss critical points, populations and the reproduction procedure in Section \ref{sec:populations}. We pay special attention to the population  originated at the trivial critical point. Section \ref{sec:DO} is dedicated to the study of $\la$-monodromy free operators and their Darboux transformations. We connect the populations and $\la$-monodromy operators in Section \ref{sec:Pop_and_DO}. Finally, in Section \ref{sec:KdV} we give a change of variables which relates a population generated by the trivial point in the case of $T_0=T_1$ to the Adler--Moser polynomials and describe the corresponding change of the celebrated KdV flows.

\section{Populations}\label{sec:populations}
In this section we recall the notions of $\sLh$ master function, the corresponding Bethe ansatz equations (BAE), reproductions and critical points, following \cite{MV04}. We study the populations which contain the trivial critical point in more detail.

\subsection{Master functions}

A master function is a multi-variable function which  depends on a generalized Cartan matrix $A$, a collection of dominant integral weights for the corresponding Lie algebra $\mathfrak{g}(A)$, and a positive point in the root lattice of the corresponding root system $R(A)$.

In this work we study $\sLh$  master functions. Set $A = \begin{pmatrix}
    2&-2\\ -2 &2
\end{pmatrix}$. Then $\mathfrak{g}(A)=\sLh$.  Fix the standard basis $\{\al_0^{\vee},\al^{\vee}_1,d\}$ in the Cartan subalgebra $\h \subset \sLh$ and the dual basis $\{\omega_0, \omega_1, \delta\}\subset \h^*$. Let $\rho = \omega_0 + \omega_1 + \delta$,  $\al_1=2\omega_1-2\omega_0$, $\al_0=\delta-\al_1$.

Let $\mathcal P=\Z \omega_0 \oplus \Z\omega_1\oplus \Z\delta$  and
$\mathcal{P}_{+} = \Z_{\geq0} \omega_0 \oplus \Z_{\geq0}\omega_1\oplus \Z_{\geq0}\delta$ be the lattice of integral weights and the cone of dominant integral weights, respectively. 

The Weyl group $W(A) \cong \Z \rtimes \Z_2$ is generated by reflections $\mathrm{s}_0,\mathrm{s}_1$. The Weyl group acts on $\mathcal{P}$ by  $\mathrm{s}_i(\mu) = \mu - \langle \mu, \al_i^{\vee}\rangle \al_i$.  The shifted action of the Weyl group is given by $w\cdot \mu = w(\mu + \rho) -\rho$.

Let $\bs z=(z_1,\dots,z_n) \in \C^n$ be a sequence of distinct complex numbers.
Let $\bs \mu=(\mu_1,\dots\,\mu_n)$, where $\mu_r=2l_r\omega_0+2m_r\omega_1  \in \mathcal{P}_+\backslash\{0\}$, $r=1,\dots,n$, be a set of non-zero dominant integral weights.
 
\begin{defi}
    The $\sLh$ master function corresponding to $\bs z$ and $\bs \mu$ is a rational function of variables  $\bs{s}=\{s_1,\dots, s_{d_0}\},\;\bs{t}= \{t_1,\dots, t_{d_1}\}$ given by
$$
\Phi(\bs{s},\bs{t}) = \prod_{1\leq i<i^{\prime}\leq d_0}(s_i-s_{i^{\prime}})^2\prod_{1\leq j<j^{\prime}\leq d_1}(t_j-t_{j^{\prime}})^2 \prod_{\substack{1\leq i \leq d_0\\ 1\leq j\leq d_1}}(s_i-t_j)^{-2}\prod_{\substack{1\leq i \leq d_0\\ 1\leq r \leq n}}(s_i-z_r)^{-2l_r}\prod_{\substack{1\leq j \leq d_1\\ 1\leq r \leq n}} (t_j-z_r)^{-2m_r}.
$$
\end{defi}

We call $(\bs{s},\bs{t})$ a critical point of $\Phi(\bs{s},\bs{t})$ if
$
\partial_{s_i}\ln \Phi = 0, \partial_{t_j}\ln\Phi = 0,
$ 
for all $1\leq i \leq d_0,\; 1\leq j\leq d_1$. Then $(\bs{s},\bs{t})$ is a critical point if and only if

\begin{equation} \label{eq:Bethe_ansatz}
\begin{aligned}
\sum_{i^{\prime}:i^{\prime}\neq i}\frac{1}{s_i-s_{i^{\prime}}} - \sum_{j=1}^{d_1}\frac{1}{s_i-t_j} - \sum_{r=1}^{n}\frac{l_r}{s_i-z_r} &= 0, \qquad 1\leq i \leq d_0,\\
    \sum_{j^{\prime}:j^{\prime}\neq j}\frac{1}{t_j-t_{j^{\prime}}} - \sum_{i=1}^{d_0}\frac{1}{t_j-s_i} - \sum_{r=1}^n\frac{m_r}{t_j-z_r} &= 0, \qquad 1\leq j \leq d_1.\\
\end{aligned}
\end{equation}

This system is called the system of Bethe ansatz equations.

\subsection{Populations}\label{subsec:populations}
We encode the data $\bs z$ and $\bs \mu$ into a pair of polynomials $\bs T(x)=(T_0(x),T_1(x))$ and the pair $(\bs s,\bs t)$ into a pair of polynomials $\bs y = (y_0,y_1)$ of a formal variable $x$:

$$T_0(x) = \prod_{r=1}^n(x-z_r)^{2l_r},\; T_1(x) = \prod_{r=1}^n(x-z_r)^{2m_r},\;\; y_0(x) = \prod_{1\leq i \leq d_0}(x-s_i),\; y_1(x) = \prod_{1\leq j \leq d_1}(x-t_j).$$ 

We extend $T_i$ and $y_i$ for all $i\in \Z$ periodically, by setting $T_i=T_j,\; y_i=y_j$ whenever $i-j$ is even.

A pair $\bs{y}$ is called generic with respect to $\bs{T}$ if $y_{0}$ and $y_{1}$ are multiplicity free, coprime, and if $y_i$ are coprime with $T_i$ for $i = 0,1$.

We are interested only in the zeroes of polynomials $y_0,y_1$ and therefore we consider them up to multiplication by a non-zero complex constant.

\begin{prop}\cite{MV04}\label{prop:cp_wr}
  The pair $\bs y$ corresponds to a critical point of master function if and only if $\bs y$ is generic and there exist polynomials $\tilde{y}_0, \tilde{y}_1$ such that  
  
  \begin{equation}\label{eq:cp_wr}
            \Wr(y_0,\tilde{y}_0) = y_1^2T_0,\;\Wr(y_1,\tilde{y}_1) = y_0^2T_1.
        \end{equation}
Moreover, if either pair $(\tilde  y_0, y_1)$ or $(y_0, \tilde y_1)$ is generic then  it also corresponds to a critical point.
\end{prop}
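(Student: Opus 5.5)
The proof follows the standard argument of \cite{MV04}: we show that the two Wronskian equations \eqref{eq:cp_wr} are separately equivalent to the two groups of Bethe ansatz equations \eqref{eq:Bethe_ansatz}. By symmetry (swap $0\leftrightarrow1$, $s\leftrightarrow t$, $l_r\leftrightarrow m_r$) it suffices to treat the first one. The key lemma is:

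\emph{Lemma.} Let $y,F$ be polynomials, with $y$ multiplicity free and coprime to $F$. Then there is a polynomial $\tilde y$ with $\Wr(y,\tilde y)=F$ if and only if $y''F-y'F'$ vanishes at every root of $y$.

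To prove the lemma, note that $\Wr(y,\tilde y)=F$ is equivalent to $(\tilde y/y)'=F/y^2$. So $\tilde y$ exists as a rational function, and is then automatically a polynomial, exactly when $F/y^2$ has zero residue at every root $s$ of $y$. The pole at $s$ is of order $2$ because $F(s)\ne0$. Writing $y=(x-s)q$, the residue of $F/((x-s)^2q^2)$ is $(F/q^2)'(s)$. Using $q(s)=y'(s)$ and $q'(s)=y''(s)/2$, this residue is proportional to $F'(s)y'(s)-F(s)y''(s)$, which gives the stated condition.

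Now apply the lemma with $y=y_0$ and $F=y_1^2T_0$; genericity gives the coprimality and multiplicity-freeness hypotheses. The condition at a root $s_i$ becomes
$$\frac{y_0''}{y_0'}(s_i)=\frac{F'}{F}(s_i).$$
The left side equals $2\sum_{i'\ne i}\frac1{s_i-s_{i'}}$. The right side equals $2\sum_j\frac1{s_i-t_j}+\sum_r\frac{2l_r}{s_i-z_r}$. Dividing by $2$ gives exactly the first line of \eqref{eq:Bethe_ansatz}, and the second line follows in the same way. This proves the equivalence, given genericity. The zeroes of a generic pair are distinct, not equal to any $z_r$, and $s_i\ne t_j$, so $\Phi$ is defined and regular near them and the critical point condition makes sense.

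For the ``moreover'' part, suppose $(\tilde y_0,y_1)$ is generic. We check the two Wronskian identities for this new pair. The first is $\Wr(\tilde y_0,-y_0)=y_1^2T_0$, which is immediate from antisymmetry of the Wronskian. The second requires a polynomial $\hat y_1$ with $\Wr(y_1,\hat y_1)=\tilde y_0^2T_1$. By the lemma, this amounts to checking that the residues of $\tilde y_0^2T_1/y_1^2$ vanish at the roots of $y_1$, i.e. the second Bethe equation with $y_0$ replaced by $\tilde y_0$.

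This residue check is the main obstacle. The plan is to show that the functions $\tilde y_0^2/y_1^2$ and $y_0^2/y_1^2$ give the same residue condition at each $t_j$. Differentiating $\tilde y_0/y_0$, whose derivative is $y_1^2T_0/y_0^2$, shows that $\tilde y_0=y_0\cdot\int y_1^2T_0/y_0^2$. The integrand vanishes to order $2$ at $t_j$, so $\tilde y_0/y_0=c+O((x-t_j)^3)$ near $t_j$, where $c\neq0$ by genericity of the new pair. Hence $\tilde y_0^2=c^2y_0^2(1+O((x-t_j)^3))$. Since the residue of $G/y_1^2$ at a simple root of $y_1$ depends only on the $1$-jet of $G$ there, the residue condition transfers from $y_0$ to $\tilde y_0$. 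The case $(y_0,\tilde y_1)$ is symmetric, and Proposition \ref{prop:cp_wr} follows.
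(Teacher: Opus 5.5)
Your proof is correct. The paper gives no proof of this proposition and only cites \cite{MV04}; your argument is essentially the standard one from that source. It uses two steps: polynomial solvability of $\Wr(y,\tilde y)=F$ is equivalent to the vanishing of the residues of $F/y^2$, i.e.\ $y\mid y''F-y'F'$, which at the roots of $y_0,y_1$ is exactly \eqref{eq:Bethe_ansatz}; and a reproduction preserves the equations at the roots of $y_1$ because $\tilde y_0/y_0$ is stationary there, so $\ln'\tilde y_0=\ln' y_0$ at zeros of $\Wr(y_0,\tilde y_0)$. The only slips are cosmetic and do not affect the argument: a root of $y_0$ may coincide with $z_r$ when $l_r=0$, and $y_1^2T_0/y_0^2$ vanishes to order at least (not exactly) $2$ at $t_j$.
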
\qed

Note that if $\bs y$ corresponds to a critical point of a master function, then  $(y_0, \td{y}_1 + c_1y_1)$ and $(\td{y}_0 + c_0 y_0 , y_1)$ are generic for all but finitely many $c_0, c_1 \in \C$.
By Proposition \ref{prop:cp_wr},  such pairs correspond to critical points of a master function associated to the same $\bs{T}$ but a different number of variables $d_0,d_1$.

We call a pair $\bs y= (y_0,y_1)$ fertile if there exist polynomials $\tilde{y}_0, \tilde{y}_1$ such that Equation \eqref{eq:cp_wr} is satisfied.

\begin{defi}\label{defi:reproduction}
    For a fertile pair $(y_0, y_1)$  we say that the pairs $\bs{y}^{[0]} = (y_0^{[0]},y_1^{[0]}):=(\td{y}_0,y_1),\;\bs{y}^{[1]} = (y_0^{[1]}, y_1^{[1]}):=(y_0,\td{y}_1)$ are obtained from the pair $(y_0,y_1)$ by reproduction in the directions $0$ and $1$, respectively.
\end{defi}

\begin{defi} Let $\bs y$ correspond to a critical point. The set of all pairs of polynomials obtained from $\bs y$ by iterated reproductions is called the population of $\bs y$.
\end{defi}
Fertile pairs of polynomials $\bs y$ such that any pair of polynomials obtained from $\bs y$ by a sequence of reproductions is again fertile are called super-fertile.
Note that if $\bs y$ corresponds to a critical point, then it is super-fertile.  In any population, the pairs $\bs y$ corresponding to critical points are dense, see \cite{MV04}. In particular any point in a population is super-fertile.

Note that if $\bs{y}$ is a fertile pair, then the set of pairs of polynomials obtained from $\bs{y}$ by two reproductions in the same direction is the same as the set of pairs of polynomials obtained from $\bs{y}$ by applying only one reproduction in this direction. 

The degrees of polynomials in pairs belonging to a population of $\bs y$ are described by the shifted action of the Weyl group.

\begin{prop}\cite{MV04}
    Let $\bs{y} = (y_0, y_1)$ be fertile and let $i\in\{0,1\}$. Assume that $\deg(y_i^{[i]}) \neq \deg(y_i)$. Then 
    $$
    \sum_{r=1}^n\mu_r - \sum_{j=0}^1\deg(y^{[i]}_j)\al_j=\mathrm{s}_i\cdot \Big(\sum_{r=1}^n\mu_r - \sum_{j=0}^1\deg(y_j)\al_j\Big).
    $$
    \qed
\end{prop}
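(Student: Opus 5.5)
We prove the statement by a degree count on the Wronskian equation, followed by a direct evaluation of the weight on both sides. Take $i=0$; the case $i=1$ is identical with the indices $0,1$ swapped.

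\emph{Degree of $\tilde y_0$.} Write $d_j=\deg y_j$, $\tilde d_0=\deg y_0^{[0]}=\deg\tilde y_0$ and $N_0=\deg T_0=\sum_r 2l_r$. Let $y_0$ and $\tilde y_0$ have leading coefficients $a$ and $b$. Then
$$\Wr(y_0,\tilde y_0)=y_0\tilde y_0'-y_0'\tilde y_0$$
has coefficient $ab(\tilde d_0-d_0)$ in degree $d_0+\tilde d_0-1$. Since $\tilde d_0\ne d_0$ by assumption, this coefficient is nonzero, so $\deg\Wr(y_0,\tilde y_0)=d_0+\tilde d_0-1$. Comparing with $\deg(y_1^2T_0)=2d_1+N_0$ gives
$$\tilde d_0=2d_1+N_0+1-d_0 .$$
The hypothesis $\deg y_0^{[0]}\ne\deg y_0$ is used exactly here, to rule out cancellation of the top term. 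Only the equation for $\tilde y_0$ in \eqref{eq:cp_wr} is needed.

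\emph{Weight computation.} Set $\Lambda=\sum_r\mu_r-d_0\al_0-d_1\al_1$. The reflection acts by $\mathrm{s}_0\cdot\Lambda=\Lambda-(\langle\Lambda,\al_0^\vee\rangle+1)\al_0$, because $\langle\rho,\al_0^\vee\rangle=1$. Since $d_1$ is unchanged, it suffices to show
$$d_0-\tilde d_0=-\bigl(\langle\Lambda,\al_0^\vee\rangle+1\bigr).$$
The Cartan matrix gives $\langle\al_0,\al_0^\vee\rangle=2$ and $\langle\al_1,\al_0^\vee\rangle=-2$, and $\langle\mu_r,\al_0^\vee\rangle=2l_r$. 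Hence $\langle\Lambda,\al_0^\vee\rangle=N_0-2d_0+2d_1$. Then
$$-\bigl(\langle\Lambda,\al_0^\vee\rangle+1\bigr)=2d_0-2d_1-N_0-1,$$
and by the degree formula above this equals $d_0-\tilde d_0$, as required.

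\emph{Where care is needed.} The only delicate point is the pairing of $\delta$ and $d$. One should check that $\al_0=\delta-\al_1$ pairs with $\al_0^\vee$ to $2$ in the paper's conventions. Indeed $\langle\delta,\al_0^\vee\rangle=0$ and $\langle\al_1,\al_0^\vee\rangle=-2$, so $\langle\al_0,\al_0^\vee\rangle=0-(-2)=2$, consistent with $A$. No other step is delicate.
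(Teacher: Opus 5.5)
Your proof is correct: comparing top degrees in $\Wr(y_i,\tilde y_i)=y_{i+1}^2T_i$, which is legitimate exactly because $\deg\tilde y_i\neq\deg y_i$, gives $\deg\tilde y_i=2\deg y_{i+1}+\deg T_i+1-\deg y_i$, and this agrees with $\mathrm{s}_i\cdot\Lambda=\Lambda-(\langle\Lambda,\al_i^\vee\rangle+1)\al_i$ by the pairings you checked. The paper gives no proof of its own and only cites \cite{MV04}, whose argument is this same degree count, so your proposal follows the standard route.
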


\subsection{Fertile and super-fertile pairs of polynomials}
In this section, we study fertile and super-fertile pairs of polynomials.

\begin{lemma}\label{lemma:sf_from_population}
    Let $\bs{y}= (y_0,y_1)$ and $\bs{T} =(T_0,T_1)$ be pairs of polynomials. Let $f_0,f_1$, be non-zero monic polynomials such that $\bs{T}^\circ =  \left(\frac{T_0f_0^2}{f_1^2},\frac{T_1f_1^2}{f_0^2}\right)$ are polynomials. Let $\bs{y}^\ci = (f_0y_0,f_1y_1)$. Then 
    \begin{itemize}
        \item If $\bs{y}$ is fertile with respect to $\bs{T}$, then $\bs{y}^\ci$ is fertile with respect to $\bs{T}^{\ci}$.
        \item If $\bs{y}$ is super-fertile with respect to $\bs{T}$, then $\bs{y}^\ci$ is super-fertile with respect to $\bs{T}^\ci$.
    \end{itemize}
    Moreover, a pair $(\bs{y}^{\circ})^{[i]}$ is obtained by a reproduction  from $\bs{y}^{\circ}$ if and only if  $(\bs{y}^{\circ})^{[i]}=(f_0y_0^{[i]}, f_1y_1^{[i]})$, where $\bs{y}^{[i]}=(y_0^{[i]}, y_1^{[i]})$ is obtained by a reproduction from $\bs{y}$.
\end{lemma}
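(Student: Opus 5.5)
The whole argument rests on one identity for Wronskians under multiplication by a common factor: $\Wr(fa,fb)=f^2\Wr(a,b)$. Fertility of $\bs y$ with respect to $\bs T$ means there are polynomials $\td y_0,\td y_1$ with $\Wr(y_0,\td y_0)=y_1^2T_0$ and $\Wr(y_1,\td y_1)=y_0^2T_1$. Put $\td y_0^\ci=f_0\td y_0$ and $\td y_1^\ci=f_1\td y_1$. Then
$$\Wr(f_0y_0,f_0\td y_0)=f_0^2y_1^2T_0=(f_1y_1)^2\,\frac{T_0f_0^2}{f_1^2},$$
and symmetrically for index $1$. These are exactly the equations \eqref{eq:cp_wr} for $\bs y^\ci$ with respect to $\bs T^\ci$. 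Since $\bs T^\ci$ consists of polynomials by hypothesis, $\bs y^\ci$ is fertile, which is the first bullet.

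Next we need the converse direction on the level of reproductions. Suppose $g$ satisfies $\Wr(f_0y_0,g)=(f_1y_1)^2T_0^\ci=f_0^2y_1^2T_0$, where $g$ is to be a polynomial. Fix a particular solution $\td y_0$ from the fertility of $\bs y$. Then $\Wr(f_0y_0,\,g-f_0\td y_0)=0$, so $g-f_0\td y_0=c f_0y_0$ for some constant $c$, since $f_0y_0\neq 0$. Hence $g=f_0(\td y_0+cy_0)$, and $\td y_0+cy_0$ is again a polynomial solution of $\Wr(y_0,\cdot)=y_1^2T_0$.

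So the reproductions of $\bs y^\ci$ in direction $0$ are exactly the pairs $(f_0y_0^{[0]},f_1y_1)$, where $y_0^{[0]}$ runs over the reproductions of $\bs y$ in direction $0$. The converse inclusion is the computation of the first paragraph. Direction $1$ is identical, which gives the ``moreover'' statement.

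Super-fertility then follows by induction on the length of the sequence of reproductions. Every pair obtained from $\bs y^\ci$ by $k$ reproductions has the form $(f_0z_0,f_1z_1)$, where $(z_0,z_1)$ is obtained from $\bs y$ by $k$ reproductions. The induction step is the ``moreover'' statement applied to $(z_0,z_1)$, which is fertile because $\bs y$ is super-fertile. The first bullet applied to $(z_0,z_1)$ then shows that $(f_0z_0,f_1z_1)$ is fertile, so $\bs y^\ci$ is super-fertile.

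The one delicate point is the converse step in the second paragraph. We have to make sure that every polynomial $g$ solving the Wronskian equation for $\bs y^\ci$ is divisible by $f_0$. This is handled by the uniqueness of solutions of the first-order linear ODE $\Wr(f_0y_0,g)=h$ up to adding a multiple of $f_0y_0$, combined with the existence of the particular solution $f_0\td y_0$. Polynomials are considered up to constant multiples, and this does not affect the argument.
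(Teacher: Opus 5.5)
Your proof is correct and follows the paper's approach: the paper's entire proof consists of the identities $\Wr(f_0y_0,f_0\tilde y_0)=(f_1y_1)^2T_0^{\circ}$ and $\Wr(f_1y_1,f_1\tilde y_1)=(f_0y_0)^2T_1^{\circ}$, which is exactly your first paragraph. Your two additions correctly spell out what the paper leaves implicit, and both, like the paper, assume $\bs y$ is fertile in the ``moreover'' part. The first is the observation that any polynomial solution $g$ of $\Wr(f_0y_0,g)=(f_1y_1)^2T_0^{\circ}$ differs from $f_0\tilde y_0$ by a constant multiple of $f_0y_0$, and is therefore divisible by $f_0$. The second is the induction on the number of reproductions that gives super-fertility.
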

\begin{proof}
    The lemma follows from the identities
    $$
    \Wr(f_0y_0, f_0\tilde{y}_0) = (f_1y_1)^2T_0^{\ci},\qquad \Wr(f_1y_1, f_1\tilde{y}_1) = (f_0y_0)^2T_1^{\ci}.
    $$
\end{proof}

We have a converse of Lemma \ref{lemma:sf_from_population}.

\begin{prop}\label{prop:population_from_sf}
    Let $\bs{y}^{\circ}=(y_0^{\ci},y_1^{\ci})$ be a super-fertile pair of polynomials corresponding to $\bs{T}^\ci = (T_0^{\ci},T_1^{\ci})$. Then there exist unique monic polynomials $f_0,f_1$ such that 
    \begin{itemize}
        \item $\bs{T} = \left(\frac{T_0^{\ci}f_1^2}{f_0^2},\frac{T_1^{\ci}f_0^2}{f_1^2}\right)$ is a pair of polynomials.
        \item There exists a population of critical points corresponding to $\bs{T}$  and a unique point $\bs{y} = (y_0,y_1)$ in this population satisfying $\bs{y}^{\ci}=(f_0 y_0,f_1y_1)$.
    \end{itemize}
\end{prop}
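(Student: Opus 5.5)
The plan is to define $f_0,f_1$ intrinsically as greatest common divisors over everything reachable from $\bs y^\ci$ by reproductions. This makes uniqueness almost automatic, and reduces existence to a local analysis at each point $x_0\in\C$.

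Let $\mathcal Y^\ci$ be the set of all pairs obtained from $\bs y^\ci$ by iterated reproductions (with respect to $\bs T^\ci$). It is nonempty and consists of fertile pairs by super-fertility. Put
\[
f_i=\gcd\{\,y_i' : (y_0',y_1')\in \mathcal Y^\ci\,\},\qquad i=0,1,
\]
normalized to be monic.

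\emph{Uniqueness.} If $f_0,f_1$ satisfy the conclusion, then by Lemma \ref{lemma:sf_from_population} the map $\bs y\mapsto (f_0y_0,f_1y_1)$ intertwines reproductions. Hence it sends the population of $\bs y$ bijectively onto $\mathcal Y^\ci$. Since critical points, which are generic pairs, are dense in a population, the gcd of the $i$-th components over a population of critical points is $1$. So the $f_i$ must be the gcd's above, and then $\bs y$ is forced to be $(y_0^\ci/f_0,\,y_1^\ci/f_1)$.

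\emph{Existence: local analysis.} Fix $x_0$ and let $t_i=\mathrm{ord}_{x_0}T_i^\ci$ and $a_i=\mathrm{ord}_{x_0}y_i$ for a pair in $\mathcal Y^\ci$. From $\Wr(y_0,\tilde y_0)=y_1^2T_0^\ci$, a generic choice of $\tilde y_0$ (adding $c\,y_0$) has order $b_0\ne a_0$ with
\[
a_0+b_0-1=2a_1+t_0 .
\]
Thus a reproduction in direction $0$ replaces $a_0$ by $b_0=2a_1+t_0+1-a_0$, and symmetrically in direction $1$. In terms of the local weight $\nu=t_0\omega_0+t_1\omega_1-a_0\al_0-a_1\al_1$ (the part visible to $\al_0^\vee,\al_1^\vee$), this is exactly the shifted action $\nu\mapsto \mathrm{s}_i\cdot\nu$, as in the degree statement from \cite{MV04}. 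Fertility at every step forces all the $b_i$ arising along iterated reproductions to be $\ge 0$. I would show that this is only possible if $\nu=w\cdot\mu$ for some $w\in W(A)$ and some $\mu=2l\omega_0+2m\omega_1$ with $2l,2m\in\Z_{\ge0}$. The point is that if $\nu+\rho$ is not $W$-conjugate to a dominant weight, then alternately reflecting in $\mathrm{s}_0,\mathrm{s}_1$ drives some exponent negative.

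Given this, I would verify two things.
\begin{itemize}
\item The orbit contains a unique element with minimal exponents $(a_0^{\min},a_1^{\min})$, the one corresponding to $\mu$, and every element of the orbit has $a_i\ge a_i^{\min}$.
\item This minimum is actually attained within $\mathcal Y^\ci$, so $\mathrm{ord}_{x_0}f_i=a_i^{\min}$.
\end{itemize}
Then $\mathrm{ord}_{x_0}(T_0^\ci f_1^2/f_0^2)=t_0+2a_1^{\min}-2a_0^{\min}=2l\ge0$, and similarly for $T_1$, so $\bs T$ is a pair of polynomials.

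\emph{Existence: global step.} Set $\bs y=(y_0^\ci/f_0,\,y_1^\ci/f_1)$. By the converse direction of the identities in Lemma \ref{lemma:sf_from_population}, dividing by $f$ preserves the Wronskian equations, so $\bs y$ is super-fertile with respect to $\bs T$. It remains to see that the set reachable from $\bs y$ contains a generic pair, because then it is a population of critical points by Proposition \ref{prop:cp_wr}. Locally at each $x_0$ the exponents can be brought to $(0,0)$ by the previous step, since the minimal element has $a_i^{\min}=0$ after division. Generic choices of the constants $c$ in $\tilde y_i+c\,y_i$ then also remove multiple roots and common roots away from the $z_r$, because these are closed conditions and the reachable set is an irreducible family.

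The step I expect to be the main obstacle is this last one: making the local minima at all points be attained \emph{simultaneously} by a single pair, and obtaining full genericity. The density results of \cite{MV04} are stated for populations that are already known to contain a critical point. I would first try to handle this by induction on $\deg y_0+\deg y_1$, repeatedly reproducing in the direction that lowers the exponent at some bad point while keeping $c$ generic, so that no other point gets worse.
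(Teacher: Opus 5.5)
\paragraph{Verdict.} Your approach is sound as far as it goes, but the existence part is incomplete: it stops at exactly the step the paper's proof is built around.

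\paragraph{What already works.} Your uniqueness argument is sound, and so is the choice $f_i=\gcd$ over the reachable set $\mathcal Y^\ci$. Your local analysis does give $\mathrm{ord}_{x_0}f_i=a_i^{\min}(x_0)$. Hence $\bs T$ consists of polynomials and $\bs y=(y_0^\ci/f_0,y_1^\ci/f_1)$ is super-fertile for $\bs T$. The gcd definition makes divisibility of all reproduced components automatic, which is clean.

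One correction to the local step. At positive level every integral weight is $W$-conjugate to a dominant one, so ``$\nu+\rho$ not conjugate to a dominant weight'' is not the relevant dichotomy. What fertility rules out is the walls, i.e.\ $a_i=b_i$. Nonnegativity of orders then makes the descent that reflects only when an exponent drops terminate at the dominant element.

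\paragraph{The gap.} It is the one you flag yourself: you never produce a critical point in the set reachable from $\bs y$. The remark that the bad conditions are closed in an irreducible family does not supply one, since an open subset may be empty until you exhibit a single good member.

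\paragraph{The missing idea.} One generic reproduction minimizes orders at all points of a finite set at once. In $\tilde y_i+c\,y_i$, each point $z$ excludes at most one value of $c$, namely the one giving the larger of the two local orders $a_i(z),b_i(z)$. So for $c$ outside a finite set, the new order is $\min(a_i(z),b_i(z))$ at every zero $z$ of the current $T_0^\ci T_1^\ci y_0y_1$. No order there increases, and the newly created zeros are simple and not shared.

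Alternating such reproductions is then a monotone descent of $\sum_{z\in S}(a_0(z)+a_1(z))$, where $S$ is the zero set of $T_0^\ci T_1^\ci y_0^\ci y_1^\ci$. The descent ends at one pair $\hat{\bs y}\in\mathcal Y^\ci$ that sits at the local minimum at every $z\in S$ simultaneously. Off $S$ it has only local data $(0,0),(1,0),(0,1)$. So the target is exponent $(0,0)$ on $S$ only; off $S$, new simple zeros are unavoidable. Dividing by $f$ then gives a generic pair, i.e.\ a critical point for $\bs T$, and $\bs y$ lies in its population.

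\paragraph{Comparison with the paper.} The paper runs this descent first. It then defines $f_i$ directly as the part of $\hat y_i$ supported at zeros of $\hat y_{i+1}T_i^\ci$. It gets polynomiality of $\bs T$ and genericity from the Wronskian inequality $\mathrm{ord}_zT_i^\ci+2(\mathrm{ord}_z\hat y_{i+1}-\mathrm{ord}_z\hat y_i)\ge 0$, and uses the gcd description only for uniqueness. Once the step above is supplied, your gcd-first route works equally well.
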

\begin{proof}
We first prove the existence of $f_0,f_1,\bs{y}$.

It is sufficient to prove the statement for any pair $\hat{\bs{y}}$ obtained from $\bs{y}^{\circ}$ by a finite sequence of reproductions, see Lemma \ref{lemma:sf_from_population}.

First, applying a finite sequence of reproductions to $\bs{y}^{\ci}$ we obtain a pair $\bs{\hat{y}}$ such that for $i=0,1$ and any choice of 
$\hat{y}_i^{[i]}$ we have
\begin{equation}\label{eq:sfp_pf1}
    \mathrm{ord}_{z}(\hat{y}_i^{[i]}) \geq \mathrm{ord}_{z}(\hat{y}_i),\textit{ for all } z\in \C \textit{ such that } \mathrm{ord}_{z}(\hat{y}_i)>1.
\end{equation}
Indeed, consider the vector space generated by polynomials $y_i^{\ci},(y_i^{\ci})^{[i]}$. Due to \eqref{eq:cp_wr} any non-zero linear combination of $y_i^{\ci}$ and $(y_i^{\ci})^{[i]}$ may vanish up to order greater than $1$ only at zeroes of $y_{i+1}^{\circ}T_{i}^{\circ}$. For any zero $z$ of $y_{i+1}^{\circ}T_{i}^{\circ}$, if $\mathrm{ord}_z((y_i^{\ci})^{[i]})>\mathrm{ord}_z(y_i^{\ci})$, then for almost any $c\in\C$ we have $\mathrm{ord}_z((y_i^{\ci})^{[i]}+cy_i^{\ci}) = \mathrm{ord}_z(y_i^{\ci})$. Therefore, for an appropriate choice of $(y_i^{\ci})^{[i]}$  we have $\mathrm{ord}_z((y_i^{\ci})^{[i]})\leq \mathrm{ord}_z(y_i^{\ci})$ for all $z$ such that $\mathrm{ord}_z((y_i^{\circ})^{[i]})>1$. This shows that for any $z$ such that $\mathrm{ord}_z(y^{\ci}_i)>1$ a generic reproduction in the $i$-th direction does not increase $\mathrm{ord}_z(y_i^{\ci})$ and decreases it if  $\mathrm{ord}_{z}((y^{\ci}_i)^{[i]}) < \mathrm{ord}_{z}(y^{\ci}_i)$.

 From \eqref{eq:sfp_pf1} and \eqref{eq:cp_wr}, for any zero $z$ of $\hat{y}_{i+1}T_i^{\ci}$ we have
 \begin{equation}\label{eq:ineq_sf}
\mathrm{ord}_{z}(T_i^{\ci}) + 2(\mathrm{ord}_z(\hat{y}_{i+1})-\mathrm{ord}_z(\hat{y}_{i}))\geq 0.
\end{equation}

Denote $$\displaystyle f_i = \prod_{\substack{ z\in\C:\ \hat{y}_{i+1}(z)T_i^{\circ}(z)=0}}(x-z)^{\mathrm{ord}_z(\hat{y}_i)}.$$ By construction $y_i= \frac{\hat{y}_i}{f_i}$, $\tilde y_i=\frac{\hat{y}_i^{[i]}}{f_i}$  are polynomials and inequalities \eqref{eq:ineq_sf} imply that $\bs{T} = (T_0,T_1) =\left(\frac{T_0^{\ci}f_1^2}{f_0^2},\frac{T_1^{\ci}f_0^2}{f_1^2}\right)$ is a pair of polynomials. In addition, \eqref{eq:cp_wr} is satisfied. Therefore the pair $\bs{y}=(y_0, y_1)$ is fertile. This pair is generic since by construction $y_i$ and $T_{i}y_{i+1}$ have no common zeroes. This implies that the pair $\bs{y}$ represents a critical point corresponding to $\bs{T}$.

It remains to show the uniqueness part. The polynomials $f_i$ can be recovered from $\bs{y}^{\circ}$ as the greatest common divisors of all $i$-th components of pairs obtained from $\bs{y}^{\ci}$ by reproductions. Therefore $\bs{y}, \bs{T}$ are unique.
\end{proof}

\begin{cor}\label{cor:}
    Let pairs of monic polynomials $(y_0,y_1)$ and $(\bar{y}_0,\bar{y}_1)$ be in (possibly different) populations of critical points corresponding to the same $\bs{T} = (T_0,T_1)$. If $\frac{y_0}{y_1} = \frac{\bar y_0}{\bar y_1}$, then $(y_0,y_1) = (\bar{y}_0, \bar{y}_1)$.
\end{cor}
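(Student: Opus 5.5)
Write $y_0/y_1=\bar y_0/\bar y_1$ in lowest terms. Let $g=\gcd(y_0,y_1)$ and $\bar g=\gcd(\bar y_0,\bar y_1)$. Then $y_0=g\,p$, $y_1=g\,q$, $\bar y_0=\bar g\,p$, $\bar y_1=\bar g\,q$, with $p,q$ monic and coprime. It suffices to show $g=\bar g$. The plan is to pass to a common multiple of the two pairs and invoke the uniqueness part of Proposition \ref{prop:population_from_sf}.

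First I will take $h=\mathrm{lcm}(g,\bar g)$ and set $f=h/g$, $\bar f=h/\bar g$. Then
\[
(f y_0, f y_1)=(\bar f\bar y_0,\bar f\bar y_1)=(hp,hq)=:\bs y^\ci .
\]
Next I apply Lemma \ref{lemma:sf_from_population} to $\bs y$ with $f_0=f_1=f$. This choice gives $\bs T^\ci=(T_0f^2/f^2,\,T_1f^2/f^2)=\bs T$. Points of a population are super-fertile, so $\bs y^\ci$ is super-fertile with respect to $\bs T$. The same lemma, applied to $\bar{\bs y}$ with $f_0=f_1=\bar f$, gives the same pair $\bs y^\ci$ and the same $\bs T$.

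The key step is then the uniqueness clause of Proposition \ref{prop:population_from_sf}. Let $(F_0,F_1)$ be the unique monic polynomials attached to $\bs y^\ci$. By the proof of that proposition, $F_i$ is the gcd of the $i$-th components of all pairs obtained from $\bs y^\ci$ by reproductions. By the last statement of Lemma \ref{lemma:sf_from_population}, these pairs are exactly $(f y_0', f y_1')$ with $\bs y'$ running over reproductions of $\bs y$. Equivalently, they are $(\bar f\bar y_0', \bar f \bar y_1')$ with $\bar{\bs y}'$ running over reproductions of $\bar{\bs y}$.

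Each population contains a generic pair obtained by reproductions, namely a genuine critical point. So I need the gcd over the whole population of the $i$-th components to be $1$. I will argue this using the density of critical points, which is the same recovery argument used in the proof of Proposition \ref{prop:population_from_sf}. It follows that $F_0=F_1=f$, and likewise $F_0=F_1=\bar f$. Hence $f=\bar f$, so $g=\bar g$, which gives $(y_0,y_1)=(\bar y_0,\bar y_1)$.

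The main obstacle is this gcd claim: that the components of a population of critical points have no common factor. This is exactly what makes the $f_i$ in Proposition \ref{prop:population_from_sf} recoverable, and I rely on it as established there. It is justified because critical points are generic and each population contains them. A further point to check is that starting from a generic point yields trivial $f_i$, so $\bs T$ is recovered unchanged. There is an alternative route in case the gcd argument is delicate: use the uniqueness of $\bs y$ stated in Proposition \ref{prop:population_from_sf} directly. Since both $\bs y$ and $\bar{\bs y}$ satisfy $\bs y^\ci=(F_0y_0,F_1y_1)$ for the same $\bs T$, the stated uniqueness would give equality at once, provided one accepts that the $F_i$ are forced to be $f$ and $\bar f$ respectively.
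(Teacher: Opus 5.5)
Your proof is correct and is essentially the paper's proof. Your $f=h/g$ and $\bar f=h/\bar g$ are exactly the coprime polynomials the paper uses to form the common super-fertile pair $(fy_0,fy_1)=(\bar f\bar y_0,\bar f\bar y_1)$ with respect to $\bs T$, and both arguments then conclude by the uniqueness part of Proposition \ref{prop:population_from_sf}. You simply unpack that uniqueness into the gcd-recovery argument.

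The fact you rely on, that the $i$-th components of a population have trivial gcd, deserves one more line than you give it; genericity of a single pair alone does not give it. At a critical point, a common factor $F$ of $y_i$ and $\tilde y_i$ satisfies $F^2\mid \Wr(y_i,\tilde y_i)=T_iy_{i+1}^2$. Since $F$ also divides $y_i$, which is coprime to $T_iy_{i+1}$, $F$ is constant.
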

\begin{proof}
    There exist coprime polynomials $f,g$ such that $f y_0 = g\bar{y}_0$. This implies that $f y_1 = g\bar{y}_1$. Denote $\bs{y}^{\circ} = (fy_0,fy_1) = (g\bar{y}_0,g\bar{y}_1)$. Then $\bs{y}^{\circ}$ is a super-fertile pair corresponding to $\bs{T}^{\circ} =\bs{T}$ and both $(y_0,y_1), (\bar{y}_0,\bar{y}_1)$ satisfy the conditions of Proposition \ref{prop:population_from_sf}. Therefore $(y_0,y_1)= (\bar{y}_0,\bar{y}_1)$.
\end{proof}

Proposition \ref{prop:population_from_sf} says that all super-fertile pairs of polynomials $\bs y^\ci$ are obtained via multiplication of pairs $\bs y$ from a population of critical points. Division of  $\bs y$ (whenever possible) produces fertile (but not super-fertile) pairs.  

Let $\bs y=(y_0,y_1)$ be in a population of critical points corresponding to $\bs T=(T_0,T_1)$. Let $f_0, f_1$ be rational functions such that 
$\bs y^\ci=(f_0y_0,f_1y_1)$, $\tilde{\bs y}^\ci=(f_0y_0^{[0]},f_1y_1^{[1]})$, 
$T^\ci=(\frac{T_0f_0^2}{f_1^2}, \frac{T_1f_1^2}{f_0^2)})$ 
are all pairs of polynomials.

\begin{lemma} \label{divide fertile lem}
The pair of polynomials $\bs y^\ci$ is fertile with respect to $\bs T^\ci$.

If $\bs y^\ci$ is super-fertile with respect to $\bs T^\ci$, then $f_0$ and $f_1$ are polynomials.
\end{lemma}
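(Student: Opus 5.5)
For the first claim I will simply write down the Wronskian identities, as in the proof of Lemma \ref{lemma:sf_from_population}. Since $\bs y$ lies in a population, it is fertile, so \eqref{eq:cp_wr} holds with $\tilde y_i=y_i^{[i]}$. Because the $f_i$ are rational functions (not constants), I will use the identity $\Wr(fa,fb)=f^2\Wr(a,b)$, which holds for any rational $f$. It gives
$$\Wr(f_0y_0,f_0y_0^{[0]})=f_0^2y_1^2T_0=(f_1y_1)^2\,\frac{T_0f_0^2}{f_1^2}=(f_1y_1)^2T_0^\ci,$$
and similarly $\Wr(f_1y_1,f_1y_1^{[1]})=(f_0y_0)^2T_1^\ci$. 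By hypothesis all of $f_0y_0,f_0y_0^{[0]},f_1y_1,f_1y_1^{[1]},T_0^\ci,T_1^\ci$ are polynomials. Hence $\tilde{\bs y}^\ci$ supplies exactly the polynomials required in \eqref{eq:cp_wr}, and $\bs y^\ci$ is fertile with respect to $\bs T^\ci$.

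For the second claim, assume $\bs y^\ci$ is super-fertile with respect to $\bs T^\ci$. Proposition \ref{prop:population_from_sf} then gives unique monic polynomials $g_0,g_1$, a pair $\bs T'=(T_0^\ci g_1^2/g_0^2,\,T_1^\ci g_0^2/g_1^2)$ of polynomials, and a point $\bs y'$ in a population for $\bs T'$ with $\bs y^\ci=(g_0y_0',g_1y_1')$. The plan is to show that $(g_0,g_1)=(f_0,f_1)$ up to constants, which makes $f_0,f_1$ polynomials.

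Both $(f_0,f_1,\bs y,\bs T)$ and $(g_0,g_1,\bs y',\bs T')$ are decompositions of $\bs y^\ci$, but Proposition \ref{prop:population_from_sf} only asserts uniqueness among polynomial $f_i$. So I will not invoke uniqueness directly; instead I will redo its characterization. The polynomials $g_i$ are recovered as the greatest common divisor of all $i$-th components of pairs in the reproduction orbit of $\bs y^\ci$. By Lemma \ref{lemma:sf_from_population}, applied in rational form via the Wronskian identity above (which works for rational $f_i$ as long as the results are polynomials), the reproductions of $\bs y^\ci$ are exactly $(f_0y_0^{[0]},f_1y_1)$ and $(f_0y_0,f_1y_1^{[1]})$, and by induction the orbit of $\bs y^\ci$ is $\{(f_0a_0,f_1a_1)\}$ with $(a_0,a_1)$ ranging over the population of $\bs y$. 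Super-fertility of $\bs y^\ci$ ensures all of these are polynomials. So $g_i=f_i\cdot\gcd_{\text{pop}}(a_i)$, where $\gcd_{\text{pop}}(a_i)$ is the gcd of $i$-th components over the population. In a population of critical points this gcd is $1$, since the critical points (generic pairs) are dense and hence the components have no common root. Therefore $f_i=g_i$ is a polynomial.

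The main obstacle is the middle step: showing that every pair in the orbit of $\bs y^\ci$ has the form $(f_0a_0,f_1a_1)$. A reproduction of $\bs y^\ci$ in direction $0$ is any polynomial $q$ with $\Wr(f_0y_0,q)=(f_1y_1)^2T_0^\ci$. Then $q/f_0$ satisfies the same Wronskian equation as $y_0^{[0]}$, so $q/f_0=y_0^{[0]}+c\,y_0$, and this is again a population member. A cleaner alternative that avoids the gcd argument is to show directly that $f_i$ has no poles: a pole of $f_0$ would force every population member $a_0$ to vanish at that point, since $f_0a_0$ is a polynomial, and this contradicts genericity of critical points in the population.
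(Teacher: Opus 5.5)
Your proposal is correct and follows the paper's proof. Fertility comes from $\Wr(fa,fb)=f^2\Wr(a,b)$, and polynomiality of $f_i$ comes from all $f_ia_i$ over the population being polynomials while the $i$-th components of a population have gcd $1$. Your ``cleaner alternative'' is exactly the paper's argument, so the detour through Proposition \ref{prop:population_from_sf} and the $g_i$ is unnecessary.

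One justification should be tightened: the gcd being $1$ does not follow from density of critical points alone. The reason is that for a critical point $(a_0,a_1)$, a common root of $a_0$ and $a_0^{[0]}$ would be a root of $\Wr(a_0,a_0^{[0]})=a_1^2T_0$. That contradicts $a_0$ being coprime to $a_1T_0$.
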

\begin{proof}
    The first statement follows as in Lemma \ref{lemma:sf_from_population}. The second statement follows from the fact that the greatest common divisor of, say, first coordinates of elements of a population is one and therefore cannot be divided by any polynomial.
\end{proof}
We do not know if all fertile pairs of polynomials are obtained as in Lemma \ref{divide fertile lem}.

\subsection{Population of \texorpdfstring{$\bs y=(1,1)$}{}}\label{subsec:triv_pop}
The populations of trivial critical points play an important role in this paper, see Sections \ref{subsec:from_lmf_to_pop} and \ref{sec:KdV}.  In this section we write coordinates of a point of the population of the trivial critical point as a Wronski determinant of polynomials satisfying a simple recursion, see \eqref{eq:theta_Wr} below.

Fix a pair of monic polynomials $\bs T = (T_0, T_1)$. Set $P(x) = T_0(x) T_1(x)$.

For $i=0,1$, define differential operators
$$L_i =P^{-1} \Big(\pd - \frac{1}{2}\ln'(T_i)\Big)\Big(\pd + \frac{1}{2}\ln'(T_i)\Big).$$
Here and everywhere $\pd=\pd_x$ is the derivative with respect to $x$. Also here and below we denote $\ln'(f) = \frac{\pd f}{f}$.

For $i = 0, 1$, we define a sequence of functions $\{\psi_{n}^{(i)}\}_{n\in\Z_{>0}}$ by 
\begin{equation}\label{eq:psi_rec}
\psi^{(i)}_1 = \frac{1}{\sqrt{T_i}},\qquad 
{L}_i\psi^{(i)}_{n+1} = \psi_{n}^{(i)}, \ n > 0.
\end{equation}

The functions $\psi_n^{(i)}$ depend on $2n-2$ integration constants. Note that $ L_i\psi_1^{(i)} = 0$.

Set 
$$\phi^{(i)}_{n} = \sqrt{T_i}\psi_n^{(i)}.$$
We have $\phi_1^{(i)}=1$.
\begin{prop}\label{prop:phi_rec}
    For $n\in\Z_{>0}$ and $i=0,1$, functions $\phi^{(i)}_{n}$ satisfy the relation
    $$
    \phi^{(i)}_{n+1} = \int T_i\int T_{i+1} \phi_{n}^{(i)}dx.
    $$
    In particular, $\phi_n^{(i)}$ are polynomials.
\end{prop}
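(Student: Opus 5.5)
We prove the statement by conjugating the operator $L_i$ by $\sqrt{T_i}$ and reading off the recursion. Write $\psi = T_i^{-1/2}\phi$ and compute $L_i\psi$ in terms of $\phi$. The key observation is that $\pd + \tfrac12\ln'(T_i)$ acts on $T_i^{-1/2}\phi$ as
$$\Big(\pd+\tfrac12\ln'(T_i)\Big)\big(T_i^{-1/2}\phi\big)=T_i^{-1/2}\phi'.$$
Indeed, the derivative of $T_i^{-1/2}$ produces $-\tfrac12 T_i^{-1/2}\ln'(T_i)\,\phi$, which cancels the second term.

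Similarly, for any function $g$ we have
$$\Big(\pd-\tfrac12\ln'(T_i)\Big)\big(T_i^{-1/2}g\big)=T_i^{1/2}\big(T_i^{-1}g\big)'.$$
To check this, note that $\pd - \tfrac12\ln'(T_i) = T_i^{1/2}\,\pd\, T_i^{-1/2}$ as operators, so it sends $T_i^{-1/2}g$ to $T_i^{1/2}\pd(T_i^{-1}g)$.

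Combining the two identities gives
$$L_i\big(T_i^{-1/2}\phi\big)=P^{-1}T_i^{1/2}\big(T_i^{-1}\phi'\big)' .$$
Since $P=T_0T_1=T_iT_{i+1}$ (indices mod 2, using the periodic extension), this equals $T_i^{-1/2}\,T_{i+1}^{-1}\big(T_i^{-1}\phi'\big)'$. Thus the recursion $L_i\psi^{(i)}_{n+1}=\psi^{(i)}_n$ becomes
$$T_{i+1}^{-1}\big(T_i^{-1}(\phi^{(i)}_{n+1})'\big)'=\phi^{(i)}_n,$$
which means $\big(T_i^{-1}(\phi^{(i)}_{n+1})'\big)'=T_{i+1}\phi^{(i)}_n$.

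Integrating twice recovers the stated formula. One integration gives $T_i^{-1}(\phi^{(i)}_{n+1})'=\int T_{i+1}\phi^{(i)}_n\,dx$. Multiplying by $T_i$ and integrating once more gives $\phi^{(i)}_{n+1}=\int T_i\int T_{i+1}\phi^{(i)}_n\,dx$. The two integration constants here account for the $2n-2$ constants mentioned earlier, and conversely every $\phi$ of this form satisfies the recursion. As a sanity check for the base case, $\phi_1^{(i)}=1$ is consistent with $L_i\psi_1^{(i)}=0$, since $\phi'=0$.

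Polynomiality then follows by induction on $n$. The base case is $\phi^{(i)}_1=1$. If $\phi^{(i)}_n$ is a polynomial, then $T_{i+1}\phi^{(i)}_n$ is a polynomial, and so is its antiderivative (for any choice of constant). Multiplying by $T_i$ and integrating again yields a polynomial.

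The only step needing care is the operator conjugation identity and the correct pairing of indices, namely $P/T_i=T_{i+1}$. Everything else is routine, so I expect no real obstacle.
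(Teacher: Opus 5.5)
Your proof is correct and is essentially the paper's argument. The paper makes the same computation, $L_i(T_i^{-1/2}\phi)=P^{-1}T_i^{-1/2}(\pd-\ln'(T_i))\pd\phi$, which equals your $T_i^{-1/2}T_{i+1}^{-1}\big(T_i^{-1}\phi'\big)'$. The only difference is direction: the paper checks that the double integrals satisfy \eqref{eq:psi_rec}, whereas you solve the recursion, which shows directly that every solution has the stated form (a point the paper leaves implicit).
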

\begin{proof}

    Define polynomials $\tilde{\phi}_n^{(i)}$ by $\tilde\phi_1^{(i)} = 1$ and $\tilde{\phi}_{n+1}^{(i)} = \int T_i\int T_{i+1} \td{\phi}_{n}^{(i)}dx$ for $n>0$ . We show that functions $\tilde{\psi}_n^{(i)} = \frac{\tilde{\phi}_{n+1}^{(i)}}{\sqrt{T_i}}$ satisfy \eqref{eq:psi_rec} . Indeed, 
    \begin{equation}
        L_i\Big(\frac{\tilde{\phi}_{n+1}^{(i)}}{\sqrt{T_i}}\Big) = \frac{1}{P\sqrt{T_i}}\Big(\pd - \ln'(T_i)\Big)\pd \tilde\phi_{n+1}^{(i)} = \frac{1}{P\sqrt{T_i}}\Big(\pd - \ln'(T_i)\Big)T_i\int T_{i+1} \tilde\phi_{n}^{(i)}= \frac{\sqrt{T_i}T_{i+1}}{P}\tilde\phi_{n}^{(i)} = \frac{\tilde\phi_n^{(i)}}{\sqrt{T_i}}.
    \end{equation}
\end{proof}

Let $$a_{2r}= r^2, \qquad a_{2r+1}= r(r+1).$$
Note that for all $n$
\begin{equation}\label{imya?} 
{2a_{n+1}-2a_n=\begin{cases} n,  & n=2r,  \\n+1,   & n=2r+1, \end{cases} } \qquad   a_{n+1}-a_{n-1} = n.
\end{equation}

For $n\in\Z_{> 0}$ and $i = 0,1$,  define 
\begin{equation}\label{eq:theta_Wr}
    \theta^{(i)}_n := \frac{\Wr(\phi_1^{(i)},\dots, \phi_n^{(i)})}{T_i^{a_{n}}T_{i-1}^{a_{n-1}}}.
\end{equation}
We also set $\theta_0^{(i)}=1$.

Note that even though the sequence $(\phi_1^{(i)},\dots, \phi_n^{(i)})$ depends on $2n-2$ parameters, the parametric family $\theta^{(i)}_n$ depends only on $n-1$ of them. Indeed,  changing the second integration constant in $\phi^{(i)}_{j}$ results in the change  $\phi^{(i)}_{s} \mapsto \phi^{(i)}_s + c\phi_{s+1-j}^{(i)}$, $s=j,j+1,\dots,n$ while $\phi^{(i)}_{s}$, $s=1,\dots,j-1$, remain the same. The Wronskian \eqref{eq:theta_Wr} clearly does not change.

\begin{prop} \label{prop:AM_gen_recc}
 Functions $\theta_{n}^{(i)}$ are polynomials which satisfy 
\begin{equation}\label{eq:AM_gen_recc}
\Wr(\theta^{(i)}_{n-1}, \theta^{(i)}_{n+1}) = T_{n+i}\cdot(\theta^{(i)}_{n})^2.
\end{equation}
The set of all pairs $(\theta^{(i)}_{2r+1+i},\theta^{(i)}_{2r+i})$ and $(\theta^{(i)}_{2r+1-i},\theta^{(i)}_{2r+2-i})$ for all $r \in \Z_{\geq 0}, i = 0,1$
 coincides with the population of the critical point $(1,1)$ corresponding to $\bs T$.
\end{prop}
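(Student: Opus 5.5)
The plan is to prove the three claims in order: first polynomiality and the Wronskian relation \eqref{eq:AM_gen_recc}, and then the identification with the population.

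For polynomiality I would use the recursion of Proposition \ref{prop:phi_rec}, written as $\partial\big(T_{i+1}^{-1}\partial(T_i^{-1}\partial\phi^{(i)}_{n+1})\big)=\dots$. More conveniently, $\partial\phi^{(i)}_{k+1}=T_i\chi_k$ with $\chi_k=\int T_{i+1}\phi^{(i)}_k$. Since $\phi_1^{(i)}=1$, we have $\Wr(1,\phi_2,\dots,\phi_n)=\Wr(\phi_2',\dots,\phi_n')=T_i^{\,n-1}\Wr(\chi_1,\dots,\chi_{n-1})$. Then I would pull out $\chi_1=\int T_{i+1}$ by the identity $\Wr(g_1,\dots,g_m)=g_1^m\Wr\big((g_2/g_1)',\dots\big)$. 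Alternatively, I would differentiate once more, using $\chi_k'=T_{i+1}\phi_k$. Iterating these two steps alternately with $T_i$ and $T_{i+1}$ should produce the factors $T_i^{a_n}T_{i-1}^{a_{n-1}}$. The exponents should add up through the identities \eqref{imya?}: $a_{n+1}-a_{n-1}=n$, and $2a_{n+1}-2a_n$ equals $n$ or $n+1$ according to parity. This would show that the Wronskian is divisible by the denominator in \eqref{eq:theta_Wr}, so that $\theta^{(i)}_n$ is a polynomial. A cleaner route, if the bookkeeping becomes heavy, is to prove polynomiality and \eqref{eq:AM_gen_recc} simultaneously by induction on $n$. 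In that version, $\theta_{n+1}$ is defined by the Wronskian equation, and polynomiality follows from the fertility of the previous pair.

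For \eqref{eq:AM_gen_recc} itself, write $W_k=\Wr(\phi_1,\dots,\phi_k)$. I would apply the Jacobi (Desnanot) identity for Wronskians,
\[
\Wr\big(\Wr(f,g),\Wr(f,h)\big)=\Wr(f)\,\Wr(f,g,h),
\]
where $f$ stands for the tuple $(\phi_1,\dots,\phi_{n-1})$. The key point, and the step I expect to be the main obstacle, is to choose $g,h$ so that one side becomes $\Wr(W_{n-1},W_{n+1})$ and the other becomes $W_n^2$ times an explicit power of $T$'s. My first attempt would be the following. The identity $\Wr(1,g_2,\dots)=\Wr(g_2',\dots)$ turns the family $\phi^{(i)}$ into a family of the type $\phi^{(i+1)}$, multiplied by powers of $T_i$; this is essentially a Darboux step. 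Hence $W_{n+1}^{(i)}$ and $W_{n-1}^{(i)}$ can both be expressed through Wronskians of the other family, of sizes $n$ and $n-2$. After that, the Jacobi identity applied to the shifted family gives the relation, with the extra factor $T_{n+i}$ coming from the parity-dependent exponent in \eqref{imya?}. Dividing by $T_i^{a_{n+1}+a_{n-1}}T_{i-1}^{a_n+a_{n-2}}$ and comparing exponents should leave exactly $T_{n+i}$. I would check this against $n=1$: there $\theta_2^{(i)}=\phi_2'/T_i=\int T_{i+1}$, and $\Wr(1,\theta_2)=T_{i+1}$, as required.

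For the population statement, \eqref{eq:AM_gen_recc} says precisely the following. Passing from $(\theta_{n-1},\theta_n)$ to $(\theta_{n+1},\theta_n)$, with the coordinates placed according to the parity of $n+i$, is a reproduction in the direction of the replaced coordinate. Starting from $\theta_0=\theta_1=1$, the listed pairs are therefore obtained from $(1,1)$ by alternating reproductions, and so lie in its population. For the converse, note that two consecutive reproductions in the same direction give nothing new (see the remark after Definition \ref{defi:reproduction}). Hence every point of the population arises from an alternating sequence of reproductions, starting in direction $0$ or in direction $1$. These two starting choices correspond to $i=0,1$. At each step the reproduction is determined up to adding $c\,\theta_{n-1}$. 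I would show that this single constant is realized by the new integration constant in $\phi_{n+1}$ (equivalently $\phi_n$), using the remark that $\theta_n^{(i)}$ depends on exactly $n-1$ parameters, one per step. Thus every alternating chain is realized by some choice of the $\phi$'s, which gives equality of the two sets.
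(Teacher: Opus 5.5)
Your overall architecture (a Jacobi/Desnanot identity for \eqref{eq:AM_gen_recc}, then reading \eqref{eq:AM_gen_recc} as a reproduction and using alternating chains) is the same as the paper's. However, the step you flag as the main obstacle is left open, and the route you sketch for it rests on a false premise.

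Differentiating away $\phi^{(i)}_1=1$ gives $W_{n+1}=T_i^{\,n}\Wr(\chi_1,\dots,\chi_n)$, with your $\chi_k=\int T_{i+1}\phi^{(i)}_k$. This family is \emph{not} of type $\phi^{(i+1)}$: it satisfies the same recursion, but it starts at $\chi_1=\int T_{i+1}$ rather than at $1$. Already $W_2=T_i\int T_{i+1}$, not $T_i\cdot 1$. So there is no ``other family'' on which to run the Jacobi identity. Even granting the reduction, it turns $W_{n+1}$ and $W_{n-1}$ into Wronskians whose sizes still differ by two, and the Jacobi identity does not connect those. In addition, the unequal prefactors $T_i^{\,n}$ and $T_i^{\,n-2}$ do not factor out of $\Wr(\cdot,\cdot)$.

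The same defect sinks your direct polynomiality argument, since after one step the first entry is no longer constant. Your fallback via fertility of the previous pair is fine, but only once \eqref{eq:AM_gen_recc} is known as an identity of rational functions.

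The missing idea is to adjoin the second kernel element $\int T_i$ to the family (recall $\Ker L_i=\langle T_i^{-1/2},T_i^{-1/2}\int T_i\rangle$). This lowers the size of the Wronskian by two:
\[
\begin{aligned}
\Wr\Big(\phi^{(i)}_1,\dots,\phi^{(i)}_n,\int T_i\Big)
&=\Wr\big((\phi^{(i)}_2)',\dots,(\phi^{(i)}_n)',T_i\big)
=T_i^{\,n}\,\Wr(\chi_1,\dots,\chi_{n-1},1)\\
&=(-1)^{n-1}\,T_i^{\,n}\,T_{i+1}^{\,n-1}\,\Wr\big(\phi^{(i)}_1,\dots,\phi^{(i)}_{n-1}\big).
\end{aligned}
\]
Now apply the Jacobi identity with $f=(\phi_1,\dots,\phi_n)$, $g=\int T_i$, $h=\phi_{n+1}$, and use the display twice (for $n$ and for $n+1$). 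You get
\[
\Wr\big(T_i^{\,n}T_{i+1}^{\,n-1}W_{n-1},\,W_{n+1}\big)=T_i^{\,n+1}T_{i+1}^{\,n}W_n^2 .
\]
Since $a_{n+1}-a_{n-1}=n$, both arguments carry the same factor $T_i^{a_{n+1}}T_{i+1}^{a_n}$, which comes out squared. The remaining exponents, by \eqref{imya?}, leave exactly $T_{n+i}$. In your set-up with $f=(\phi_1,\dots,\phi_{n-1})$, the choice $g=\int T_i$, $h=\phi_n$ gives the same relation with $n$ shifted by one.

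This identity is also what the converse needs. Replacing $\phi_n$ by $\phi_n+(-1)^n c\int T_i$ turns $\theta_n$ into $\theta_n+c\,\theta_{n-2}$, so every reproduction of the previous pair is realized. A count of ``$n-1$ parameters'' does not give this. You need the coefficient of $c$ to be a nonzero multiple of $\theta_{n-2}$, and that is exactly what the displayed formula supplies.
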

\begin{proof}
    The proof is based on the identities of Wronskians and it  is similar to the proof of Lemma $2.2$ in \cite{dV17}.
    
    We repeatedly use the following elementary properties of the Wronskians which hold for any polynomials $g_1,\dots, g_n$ and $g$,
$$
\Wr(1,g_1,\dots, g_n)=\Wr(g_1',\dots,g_n'), \qquad \Wr(g g_1,g g_2,\dots, g g_n)=g^{n}\Wr(g_1,\dots,g_n).
$$
     In addition, for any polynomials $f_1,\dots, f_{n+1}, \chi$, we have  the identity,
    \begin{equation}\label{eq:Wr_th_pf_1}
        \Wr(\Wr(f_{1},\dots, f_{n}, \chi), \Wr(f_{1},\dots, f_{n}, f_{n+1})) = \Wr(f_{1},\dots, f_{n})\Wr(f_{1},\dots, f_{n},\chi,f_{n+1}).
    \end{equation}

    Set $f_j = \phi^{(i)}_j$ for all $j = 1,\dots, n+1$ and $\chi =\int T_i=\theta_2^{(i)}$. Recall that $\phi_1^{(i)} = 1$. 
    Then, using Proposition \ref{prop:phi_rec}, for $n\in\Z_{>0}$, we obtain
    \begin{multline}\label{eq:Wr_th_pf_2}
        \Wr(\phi^{(i)}_{1},\dots, \phi^{(i)}_{n}, \chi) = \Wr((\phi^{(i)}_2)',\dots, (\phi^{(i)}_n)', T_i) = \Wr\Big(T_{i}\int T_{i+1}\phi^{(i)}_1,\dots, T_i\int T_{i+1}\phi^{(i)}_{n-1}, T_i\Big) =\\= T_i^n\Wr\Big(\int T_{i+1}\phi^{(i)}_1,\dots, \int T_{i+1}\phi^{(i)}_{n-1},1\Big) = (-1)^{n-1}T_i^{n+a_{n{-1}}}T_{i+1}^{n-1+a_{n{-2}}}\theta^{(i)}_{n-1}.
    \end{multline}
    Substituting \eqref{eq:Wr_th_pf_2} into \eqref{eq:Wr_th_pf_1}, we obtain
    $$
    \Wr((-1)^{n-1}T_i^{n+a_{n-1}}T_{i+1}^{n-1+a_{n-2}}\theta^{(i)}_{n-1}, T_i^{a_{n+1}}T_{i+1}^{a_{n}}\theta^{(i)}_{n+1}) = (T_i^{a_{n}}T_{i+1}^{a_{n-1}}\theta^{(i)}_n)((-1)^{n+1}T_i^{n+1+a_{{n}}}T_{i+1}^{n+a_{n{-1}}}\theta^{(i)}_{n}).
    $$
    {Using \eqref{imya?}, we see that the powers of $T_i$ and $T_{i+1}$ in the two arguments of the Wronskian are the same, and we get}
$$\Wr(\theta^{(i)}_{n-1},\theta^{(i)}_{n+1}) = T_i^{{2a_n-2a_{n+1}} +n+1} T_{i+1}^{{2a_{n-1}-2a_{n}}+n}(\theta^{(i)}_n)^2{=T_{n+i}\ (\theta^{(i)}_n)^2.}$$

Therefore the pairs $(\theta^{(i)}_{2r+1-i},\theta^{(i)}_{2r+2-i})$ and $(\theta^{(i)}_{2r+1+i},\theta^{(i)}_{2r+i})$ are in the population for any choice of integration constants in the recursion \eqref{eq:psi_rec}.

It remains to show that we obtain all points of the population. Fix some $i = 0, 1$. We proceed by induction on $n$. 
Note that  $\Ker(L_{i})$ is spanned by $\frac{1}{\sqrt{T_i}}, \frac{1}{\sqrt{T_i}}\chi$. This implies that given $\phi^{(i)}_s$, $s=1,\dots,n-1$, the function $\phi^{(i)}_n$ is defined up to a change $\phi^{(i)}_n \to \phi^{(i)}_n + c_1 + c_2 \chi$. The Wronskian does not depend on $c_1$ since $\phi^{(i)}_1=1$.
Let $\phi^{(i)}_n(c) =  \phi^{(i)}_n + (-1)^nc \,\chi$. Using \eqref{eq:Wr_th_pf_2}, we obtain
$$
\frac{\Wr(\phi_1^{(i)},\dots,\phi_{n}^{(i)}(c))}{T_i^{a_n}T_{i+1}^{a_{n+1}}}= \theta_n^{(i)} + (-1)^nc\frac{(-1)^nT_{i}^{n-1+a_{n-2}}T_{i+1}^{n-2+a_{n-3}}\theta^{(i)}_{n-2}}{T_i^{a_n}T_{i+1}^{a_{n-1}}} = \theta_{n}^{(i)} + c\, \theta_{n-2}^{(i)}.
$$
By induction hypothesis, the pairs $(\theta_{n-2}^{(i)} ,\theta_{n-1}^{(i)})$ for $n+i$ odd and $(\theta_{n-1}^{(i)} ,\theta_{n-2}^{(i)})$ for $n+i$ even, cover all the pairs in the population obtained via the corresponding sequence of reproductions. Addition of one reproduction corresponds to adding the pairs $(\theta_{n}^{(i)}(c),\theta_{n-1}^{(i)})$ for $n+i$ odd and $(\theta_{n-1}^{(i)} ,\theta_{n}^{(i)}(c))$ for $n+i$ even, where $\theta_n^{(i)}(c) = \theta_{n}^{(i)} + c \theta_{n-2}^{(i)}$ and $\theta_{n-2}^{(i)}$ runs through all possible choices in the part of the population obtained on the previous step.
\end{proof}
The relation of the sequences $\{\theta_n^{(i)}\}_{n\in\Zp}$ to the population of $(1,1)$ can be illustrated by the following picture. In this picture we denote by $\mathrm{s}_i$ the reproduction procedure in direction $i$. The top two pairs correspond to $n=0$ and the critical point $(1,1)$. The two pairs below correspond to $n=1$, the next two to $n=2$, etc.

\begin{center}
\begin{tikzpicture}[>=stealth, thick]
    \node (root) at (0, 0) {$ (\theta_0^{(1)}, \theta_1^{(1)}) = (\theta_1^{(0)}, \theta_0^{(0)})$};

    \node (L1) at (-2.5, -2) {$(\theta_2^{(1)}, \theta_1^{(1)})$};
    \node (L2) at (-2.5, -4) {$(\theta_2^{(1)}, \theta_3^{(1)})$};
    \node (L3) at (-2.5, -5.5) {$\dots$};

    \node (R1) at (2.5, -2) {$(\theta_1^{(0)}, \theta_2^{(0)})$};
    \node (R2) at (2.5, -4) {$(\theta_3^{(0)}, \theta_2^{(0)})$};
    \node (R3) at (2.5, -5.5) {$\dots$};

    \draw[->] ([xshift=-1.2cm]root.south) -- (L1.north) node[midway, right=4pt, text=red] {$\mathrm{s}_0$};
    \draw[->] ([xshift=1.2cm]root.south) -- (R1.north) node[midway, right=4pt, text=blue] {$\mathrm{s}_1$};

    \draw[->] (L1.south) -- (L2.north) node[midway, right=4pt, text=blue] {$\mathrm{s}_1$};
    \draw[->] (L2.south) -- (L3.north) node[midway, right=4pt, text=red] {$\mathrm{s}_0$};

    \draw[->] (R1.south) -- (R2.north) node[midway, right=4pt, text=red] {$\mathrm{s}_0$};
    \draw[->] (R2.south) -- (R3.north) node[midway, right=4pt, text=blue] {$\mathrm{s}_1$};
\end{tikzpicture}
\end{center}

\section{Differential operators} \label{sec:DO}
In this section we define and study (non-monic) $\la$-monodromy free (Fuchsian, rational) Schr\"odinger operators. Then we recall the notion of Darboux transformations for Schr\"odinger operators and introduce a generalized version for the case of non-monic Schr\"odinger operators. We  relate the Darboux transformations to 
the reproduction procedure.

\subsection{$\la$-monodromy free Schr\"odinger operators}
Let $P\in \C[x]\backslash\{0\},\; U\in \C(x)$, let $L = \frac{1}{P}(\pd^2 - U)$ be a non-monic Schr\"odinger operator. We call $U$ the potential of $L$.

Operator $L$ is called Fuchsian if the potential $U$ has poles of order at most $2$ and $x U(x)\to 0$ as $x\to \infty$.

Following \cite{DG86, O99, GV09}, we study operators $L$ such that all eigenfunctions of $L$ have no monodromy. We increase this family of operators by allowing monodromy $-\mathbb{1}$. 

\begin{defi}
  We call the operator $L=\frac1P(\partial^2-U)$  a $\la$-monodromy free operator if for any $\la\in\C$ and any $s\in\C$, 
the monodromy of operator $\partial^2-U-\la P$ around $x=s$ 
is $\pm \mathbb{1}$.   
\end{defi}

The property of $L$ being $\la$-monodromy free is equivalent to an algebraic system of equations on the coefficients of the expansion of $U$ at poles. Let $s$ be a pole of $U$. Let the expansion of $U-\la P$ at $s$ be $\displaystyle U-\la P=\sum_{j=-2}^{\infty}a_j(s,\la)(x-s)^j$. Note that $a_j(s,\la)$ are linear in $\la$ and 
\begin{equation}\label{eq:ajs}
    a_{{-}2}(s,\la) {=} a_{{-}2}(s),\; a_{-1}(s,\la) {=} a_{{-}1}(s),\; a_0(s,\la) {=} a_0(s,0) {+} \la P(s),\; a_{1}(s,\la) {=} a_1(s,0) {+} \la P'(s).
\end{equation}
The necessary and sufficient conditions for the potential $U$ so that $L$ is $\la$-monodromy free are given by the following standard proposition, see for example \cite{I56}[Ch. 16]. 
\begin{prop}\label{prop:mf_det}
The operator $L=\frac1P(\partial^2-U)$ is $\la$-monodromy free if and only if for any pole $s$ of the potential $U$,  we have
\begin{enumerate}
    \item $a_{-2}(s) = m_{s}(m_{s}+1)$, where
    $m_s\in \frac{1}{2}\Z_{>0}$. 
    \item The polynomial $\Delta(s,\la)$ in $\la$ given by

\begin{equation}\label{eq:no_monodromy_det}
\Delta(s,\la)=\det
\begin{pmatrix}
a_{-1}(s) & 1\cdot 2m_{s} & 0 & 0 & \cdots & 0 \\
a_0(s,\la) & a_{-1}(s) & 2\cdot(2m_{s}-1) & 0 & \cdots & 0 \\
a_1(s,\la) & a_{0}(s,\la) & a_{-1}(s) & 3\cdot(2m_{s}-2) & \cdots & 0 \\
 \\
\vdots & \ddots & \ddots & \ddots & \ddots & 0 \\
\vdots & \ddots & \ddots & a_0(s,\la) & a_{-1}(s) & 2m_{s}\cdot1 \\
a_{2m_{s}-1}(s,\la) & \cdots & & a_{1}(s,\la) & a_{0}(s,\la) & a_{-1}(s)
\end{pmatrix}
\end{equation}
vanishes for all $\lambda$, $\Delta(s,\la)= 0$.
\end{enumerate}
\end{prop}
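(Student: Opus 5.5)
The proof is a local Frobenius analysis at each pole $s$, carried out for the family $\partial^2-U-\la P$ with $\la$ as a parameter. Since $P$ is a polynomial, the only singular points of $\partial^2-U-\la P$ are the poles of $U$. Near a pole $s$ the potential has a pole of order at most $2$, so $s$ is a regular singular point. Write $U-\la P=\sum_{j\ge -2}a_j(s,\la)(x-s)^j$ as in \eqref{eq:ajs}. The indicial equation is $\rho(\rho-1)=a_{-2}(s)$.

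First I prove condition (1) is necessary. If the monodromy is $\pm\mathbb 1$, every solution is of the form $(x-s)^{\rho}\cdot(\text{single-valued})$, with a common sign $e^{2\pi i\rho}=\pm1$. Hence both exponents lie in $\tfrac12\Z$, and no logarithms occur. The exponents are $\rho_\pm$, with $\rho_++\rho_-=1$. Write $\rho_-=-m_s$ and $\rho_+=m_s+1$, so $a_{-2}(s)=m_s(m_s+1)$ and the exponent difference is $2m_s+1$.

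If $m_s=-\tfrac12$, the exponents coincide and a logarithm is forced. So $m_s\neq-\tfrac12$. By the symmetry $m\mapsto -1-m$ we may take $m_s\ge 0$. If $m_s=0$, then $a_{-2}(s)=0$ and $s$ has only a simple pole. The solution with exponent $0$ must then be free of logarithms, which forces $a_{-1}(s)=0$, so $s$ is not a pole at all. Therefore $m_s\in\tfrac12\Z_{>0}$. Conversely, condition (1) gives exponents whose monodromy factors $e^{2\pi i\rho_\pm}$ coincide and equal $\pm1$. So the monodromy is $\pm\mathbb1$ exactly when there is no logarithmic term.

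Next I turn the absence of logarithms into the determinant condition. Substitute $\psi=(x-s)^{-m_s}\sum_{k\ge0}c_k(x-s)^k$ with $c_0=1$. Comparing coefficients of $(x-s)^{k-m_s-2}$ gives
\begin{equation*}
\big[(k-m_s)(k-m_s-1)-m_s(m_s+1)\big]c_k=\sum_{j=1}^{k}a_{j-2}(s,\la)\,c_{k-j}.
\end{equation*}
The bracket equals $k(k-2m_s-1)$, up to sign conventions. It vanishes exactly at $k=2m_s+1$, the resonance, and is nonzero for $1\le k\le 2m_s$.

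For $k=1,\dots,2m_s+1$ these equations form a linear system in $(c_0,\dots,c_{2m_s})$ with $c_0$ normalized. After rescaling rows by signs, the coefficient matrix is lower Hessenberg. Its diagonal is $a_{-1}(s)$, its subdiagonals are $a_0,a_1,\dots$, and its superdiagonal is $k(2m_s+1-k)$. These are the entries $1\cdot 2m_s$, $2\cdot(2m_s-1)$, and so on, in \eqref{eq:no_monodromy_det}.

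The series continues past the resonance exactly when the equation at $k=2m_s+1$ is consistent. The first $2m_s$ equations determine $c_1,\dots,c_{2m_s}$ uniquely, so consistency amounts to the vanishing of the full $(2m_s+1)\times(2m_s+1)$ determinant $\Delta(s,\la)$. I will check this by Cramer's rule, or equivalently by expanding along the last row. The standard Frobenius theorem then says there is no logarithm if and only if the resonance equation is consistent, since the coefficient $c_{2m_s+1}$ is then free. Requiring this for all $\la$ gives $\Delta(s,\la)\equiv0$.

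The main obstacle is bookkeeping rather than ideas. I must verify that the coefficient matrix matches \eqref{eq:no_monodromy_det} exactly, including the column ordering and signs, so that the determinant is really $\Delta$ and not merely a multiple of it. I must also confirm that the degenerate case $m_s=0$ cannot yield a genuine pole. Finally, I must check that only the coefficients $a_j(s,\la)$ with $j\le 2m_s-1$ enter, matching the last row of the matrix.
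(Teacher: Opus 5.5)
Your proposal is correct and follows the same route as the paper: a Frobenius analysis at each pole, in which the solution with exponent $m_s+1$ always exists, and the recursion for the exponent $-m_s$ solution passes the resonance $k=2m_s+1$ without a logarithm exactly when $\Delta(s,\la)=0$. You supply more detail than the paper does, namely ruling out $m_s=-\frac12$ and $m_s=0$ to get $m_s\in\frac12\Z_{>0}$, deriving the bracket $k(k-2m_s-1)$, and the Cramer's-rule reduction of consistency to the full determinant; all of this checks out.
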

\begin{proof}

Using Frobenius method one looks for solutions in the form of a power series, 
$$(x-s)^{\al}\Big(1+\sum_{j=1}^{\infty}b_j(x-s)^{j}\Big),$$
where $\al \in \{-m_s,m_s+1\}$ are the two solutions of the characteristic equation. A solution with $\al=m_s+1$ always exists. The existence of a solution with $\al=-m_s$ is equivalent to the vanishing of $\Delta(s,\la)$.
\end{proof}
Proposition \ref{prop:mf_det} defines $m_s \in \frac{1}{2}\Z_{\geq0}$ for any pole $s$ of $U$. We set $m_s =0$ for all $s\in \C$, where $U$ is regular. Then $-m_s$, $m_s+1$ are exponents of $L$ at $x=s$. We call $m_s$ the modified exponent of $L$ at $x=s$.

\subsection{Residues of potentials of $\la$-monodromy free operators}
We show that monodromy of a $\la$-monodromy free operator is trivial (cannot be $-\mathbb{1}$) at poles of the potential $U$, which are not zeroes of $P$ and compute the residues of $U$ at these poles.

\begin{prop}\label{prop:mf_residue}
    Let $s\in\C$ and $P(s)\neq 0$.  If for all $\la\in\C$, $\Delta(s,\la)=0$, then $m_s\in \Z_{\geq 0}$ and
    \begin{equation}\label{eq:residue_formula}
        a_{-1}(s) = \frac{m_s(m_s+1)P'(s)}{2P(s)}.
    \end{equation}
\end{prop}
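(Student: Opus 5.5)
Since $\Delta(s,\la)$ is a polynomial in $\la$ that must vanish identically, the plan is to extract from it only the \emph{leading coefficient} in $\la$ and read off the two claims from that. By \eqref{eq:ajs}, $a_{-2}$ and $a_{-1}$ do not depend on $\la$, while $a_j(s,\la)=a_j(s,0)+\la P^{(j)}(s)/j!$ for $j\ge 0$. The first step is therefore to locate the top power of $\la$ in the determinant \eqref{eq:no_monodromy_det}. Set $N=2m_s$. The $\la$-dependent entries lie on the main subdiagonal ($a_0$) and below it. A permutation contributing the highest power of $\la$ should use the subdiagonal as often as possible, together with the superdiagonal constants $k(N+1-k)$. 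Each subdiagonal entry $a_0$ carries one factor $\la P(s)$, and $P(s)\neq 0$.

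The second step disposes of half-integer $m_s$. The matrix has size $N\times N$. A product of disjoint transpositions $(k,k+1)$ picks $a_0$ from the subdiagonal and the constant $k(N+1-k)$ from the superdiagonal, so it gives the power $\la^{N/2}$ when $N$ is even. To make this precise I would write the $\la$-expansion of $\Delta$ via the rescaling trick: replace $x-s$ by $\varepsilon(x-s)$ and $\la$ by $\varepsilon^{-2}\la$, so that $a_j$ picks up weight $j+2$. The determinant is then weighted-homogeneous of weight $N$, and the top $\la$-degree is governed by the determinant for the model potential $m(m+1)/(x-s)^2+\la P(s)$. For that model, the Frobenius recursion on the exponent $-m_s$ involves only even steps. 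When $N$ is odd, the obstruction at step $N$ has odd weight, so it must involve $a_{-1}$ or an odd $a_j$. I would instead argue directly: for $\la$ large, rescale $x-s=\la^{-1/2}\xi$. The equation becomes a perturbation of $\partial_\xi^2-m(m+1)/\xi^2+P(s)$, i.e.\ a Bessel equation with half-integer index when $m\in\frac12+\Z$. Its solutions $\xi^{1/2}J_{\pm(m+1/2)}$ have a logarithmic term exactly when $m+\tfrac12$ is an integer with $2m+1$ odd. This contradicts vanishing of the leading coefficient, which is a nonzero multiple of $P(s)^{(N+1)/2}$ times a Bessel-type constant. So $m_s\in\Z_{\ge0}$.

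The third step gives the residue formula. With $N=2m_s$ even, $\Delta$ has degree at most $m_s$ in $\la$. The $\la^{m_s}$ coefficient involves only $P(s)$ and the superdiagonal constants, and it cancels because the model Bessel equation with integer $m$ and $\la\neq0$ has no logarithm. So the first informative coefficient is that of $\la^{m_s-1}$ (equivalently weight $N$ with one unit of "$\varepsilon$" absorbed). It is linear in $a_{-1}(s)$ and $P'(s)$: terms with $a_{-1}$ use a diagonal entry, terms with $P'$ use the second subdiagonal $a_1$. The cleanest way to compute it is to run Frobenius for the solution $(x-s)^{-m}\sum b_j(x-s)^j$ with potential $m(m+1)(x-s)^{-2}+a_{-1}(x-s)^{-1}+\la(P(s)+P'(s)(x-s))$, keep terms to first order in $a_{-1}$ and $P'(s)$, and impose vanishing of the resonance at $j=2m+1$. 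This step should yield $c_1 a_{-1}P(s)^{m}-c_2P'(s)P(s)^{m-1}=0$ with explicit combinatorial constants satisfying $c_2/c_1=m(m+1)/2$, which is \eqref{eq:residue_formula}.

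The main obstacle is evaluating these constants, i.e.\ the sums over paths in the Frobenius recursion. I would guess the ratio via an exact check: for $P$ linear, $P=x$, a Darboux-type exact solution (Airy case) with known residue pins it down. I would then verify it by the recursion identity $\sum_k k/(k(2m+1-k))$-type telescoping.
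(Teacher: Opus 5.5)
Your strategy of reading off the top $\la$-coefficient of $\Delta(s,\la)$ is the paper's, but your size count is off by one, and this makes the third step wrong as stated.

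The matrix in \eqref{eq:no_monodromy_det} is $(2m_s+1)\times(2m_s+1)$. Its superdiagonal $1\cdot 2m_s,\,2(2m_s-1),\dots,2m_s\cdot 1$ has $2m_s$ entries, which matches the resonance at step $2m_s+1$ that you invoke later. In your rescaling $\Delta$ therefore has weight $2m_s+1$, not $N=2m_s$.

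For half-integer $m_s$ this weight is even. So the terms using only subdiagonal entries $a_0$ exist and give $\la^{m_s+1/2}$ with coefficient $\pm P(s)^{m_s+1/2}$ times a product of nonzero superdiagonal entries. This settles $m_s\in\Z_{\ge0}$ at once; your Bessel argument reaches the same conclusion but contradicts your own weight count.

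For integer $m_s$ the weight is odd, so no term consists of $a_0$'s alone. The maximal degree $\la^{m_s}$ is attained exactly by two kinds of terms: $m_s$ entries $a_0$ and one diagonal $a_{-1}$, or $m_s-1$ entries $a_0$ and one $a_1$, which contributes $\la P'(s)$. The model's contribution vanishes, as you say, but the $\la^{m_s}$-coefficient is not zero automatically. It is the informative one, of the form $c_1a_{-1}(s)P(s)^{m_s}-c_2P'(s)P(s)^{m_s-1}$, cf.\ \eqref{eq:det_asympt_even}. That is exactly the shape you eventually write down, and it is not the shape of a $\la^{m_s-1}$-coefficient. The $\la^{m_s-1}$-coefficient you propose to use also contains $a_0(s,0)$, $a_1(s,0)$, $P''(s)$, $a_{-1}(s)^3$, and so on, and it does not yield \eqref{eq:residue_formula}. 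Already for $m_s=1$ one has $\Delta=a_{-1}^3-4a_0a_{-1}+4a_1$. Its $\la$-coefficient is $4\big(P'(s)-P(s)a_{-1}(s)\big)$, which is exactly the claim, while its $\la^0$-coefficient is $a_{-1}^3-4a_0(s,0)a_{-1}+4a_1(s,0)$.

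The second gap is that the real content of the proposition, $c_2/c_1=m_s(m_s+1)/2$ with $c_1\neq 0$, is only guessed. Pinning it by an exact example would need, for every $m$, an explicit $\la$-monodromy free operator with that modified exponent at a point where $P(s)P'(s)\neq0$ and a known residue. You do not supply this, and the ``telescoping'' is unspecified. The paper obtains the constants from three ingredients:
the column-elimination recursion \eqref{eq:no_monodromy_det_rec} for the leading coefficients;
the closed form \eqref{eq:even_c};
two binomial sums, read off as coefficients of $(1+x)^{-1}$ and $\frac14(1+x)^{-3}$.

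Your first-order Frobenius idea can in fact be closed cleanly. Let $\psi_0=\sum_j\beta_j\mu^j\xi^{2j-m}$ be the log-free solution of the model $\psi''=V\psi$, with $V=m(m+1)\xi^{-2}+\mu$ and $\mu=\la P(s)$, and write $\psi_0^2=\sum_j\gamma_j\mu^j\xi^{2j-2m}$. Variation of parameters shows that the obstruction linear in the perturbation $a_{-1}\xi^{-1}+\la P'(s)\xi$ is $\mathrm{Res}_{\xi=0}\,\psi_0^2\big(a_{-1}\xi^{-1}+\la P'(s)\xi\big)=\mu^{m-1}\la\big(\gamma_mP(s)a_{-1}+\gamma_{m-1}P'(s)\big)$. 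Since $f=\psi_0^2$ satisfies $f'''-4Vf'-2V'f=0$, the coefficient of $\xi^{-3}$ gives $4m(m+1)\gamma_m=-8\gamma_{m-1}$, and the same recursion shows $\gamma_{m-1}\neq0$. Together these give \eqref{eq:residue_formula}. As written, however, this decisive computation is absent.
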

\begin{proof}
    We claim that the polynomial $\Delta(s,\la)$ has the form

    \begin{align}
        \Delta(s,\la) &= \frac{(-1)^{3m_s + \frac{1}{2}}((2m_s)!)^2P(s)^{ m_s + \frac{1}{2}}}{2^{m_s -\frac{1}{2}}( m_s-\frac{1}{2})! (2m_s-1)!!}\la^{ m_s + \frac{1}{2}} + \dots \  &m_s&\in \Z+\frac12,\;\;\;\;\;\label{eq:det_asympt_odd} \\
        \Delta(s,\la) &=  \frac{(-1)^{m_s}P(s)^{m_s}((2m_s)!)^2}{((2m_s-1)!!)^2}\left(a_{-1}(s) - \frac{P'(s)m_s(m_s+1)}{2P(s)}\right)\la^{m_s} + \dots \  &m_s&\in\Z\label{eq:det_asympt_even},\\
    \end{align}
where the dots denote the terms of degree in $\lambda$ less than $m_s$. 

From that it follows that $m_s\in \Z$ and that \eqref{eq:residue_formula} holds.

To show the claim, we compute the highest term of the polynomial $\Delta(s,\la)$ by 
subtracting a multiple of the second column from the first one to make  $(1,1)$ entry zero and then expanding the determinant along the first row. 

For example, in the first step, we obtain    
\begin{equation}\label{eq:monodromy_free_det}
\Delta(s,\la)=
-2m_s\begin{vmatrix}
a_0(s,\la)-\frac{a_{-1}(s)}{2m_s}a_{-1}(s) & 2\cdot(2m_s-1) & 0 & \cdots & 0 \\
a_1(s,\la)-\frac{a_{-1}(s)}{2m_s}a_{0}(s,\la)  & a_{-1}(s) & 3\cdot(2m_s-2) & \cdots & 0 \\
 \\
\vdots & \ddots & \ddots & \ddots & \ddots & 0 \\
\vdots & \ddots & \ddots & a_0(s,\la) & a_{-1}(s) & 2m_s\cdot1 \\
a_{2m_s-1}(s,\la)-\frac{a_{-1}(s)}{2m_s}a_{2m_s-2}(s,\la)& a_{2m_s-3}(s,\la) & \cdots & a_{1}(s,\la) & a_{0}(s,\la) & a_{-1}(s)
\end{vmatrix}.
\end{equation}

Denote the polynomial in $\lambda$ at the position $(1,1)$ after $l$ such steps by $p_{l}(s,\la)$. Then 
$$p_0(s,\la)=a_{-1}(s), \qquad p_{2m_s} =(-1)^{2m_s}((2m_s)!)^{-2}\Delta(s,\la).$$ 
We have a recurrence relation
\begin{equation}\label{eq:no_monodromy_det_rec}
p_{l+1}(s,\la) =a_{l}(s,\la) - \mathop{\sum}_{j=0}^{l}\frac{a_{-1+j}(s,\la)p_{l-j}(s,\la)}{(l-j+1)(2m_s-l+j)}.
\end{equation}
Let 
$$p_l(s,\la) = c_l \la^{d_l} + \dots,$$ 
where the dots denote the terms of degree in $\lambda$ less than $d_l$.

Recall that $a_{j}(s,\la)$ are linear in $\la$ and $a_{-1}(s,\la)$ does not depend on $\la$.

By induction on $l$, we obtain 
$$
d_{2l} = l,\; d_{2l+1} = l+1,\; \forall l>0.
$$
Then, taking the highest terms in the recurrence \eqref{eq:no_monodromy_det_rec}, for all $l>0$, and keeping in mind \eqref{eq:ajs}, we obtain a recursion
\begin{align}
c_{2l+1} &= - \frac{P(s)c_{2l-1}}{2l(2m_s-2l+1)},\\
c_{2l+2} &= -\left(\frac{a_{-1}(s)c_{2l+1}}{(2l+2)(2m_s-2l-1)} +\frac{P(s)c_{2l}}{(2l+1)(2m_s-2l)} + \frac{P'(s)c_{2l-1}}{2l(2m_s-2l+1)}\right).
\end{align}

That immediately implies the formula \eqref{eq:det_asympt_odd}.

For $c_{2l+2}$, we compute

\begin{multline}\label{eq:even_c}
    c_{2l+2} = \frac{(-1)^{l+1}P(s)^{l+1}(m_s-l-1)!}{(2l+1)!!2^{l+1}}\\\left( \mathop{\sum}\limits_{j=-1}^{l}\frac{(2(l-j)-1)!!}{(2m_s-1)!!(m_s-l+j)!}\left(\frac{(2(m_s-l+j)-1)!!a_{-1}(s)}{(l-j)!} - \frac{2P'(s)}{P(s)}\frac{(2(m_s-l+j)+1)!!}{(l-j-1)!}\right)\right),
\end{multline}
where by definition we set $(-1)!!=1, \frac{1}{(-1)!}=0$.

For $l = m_s-1$ the equation \eqref{eq:even_c} becomes
\begin{multline}
    c_{2m_s} = \frac{(-1)^{m_s}P(s)^{m_s}}{2^{m_s}((2m_s-1)!!)^2}\mathop{\sum}\limits_{j=0}^{m_s}\frac{(2(m_s-j)-1)!!}{j!}\left(\frac{a_{-1}(s)(2j-1)!!}{(m_s-j)!} - \frac{2P'(s)(2j+1)!!}{P(s)(m_s-j-1)!}\right) = \\ 
    = \frac{P(s)^{m_s}}{((2m_s-1)!!)^2}\mathop{\sum}\limits_{j=0}^{m_s}\left(a_{-1}(s)\binom{-\frac{1}{2}}{m_s-j}\binom{-\frac{1}{2}}{j} + \frac{4P'(s)}{P(s)}(m_s-j)(j+1)\binom{-\frac{1}{2}}{m_s-j}\binom{-\frac{1}{2}}{j+1}\right).
\end{multline}

For a series $f(x) =\displaystyle\sum_{j=0}^{\infty}f_jx^j$ in $x$, denote $\{x^j\}[f(x)] := f_j$. Note that 
$$
\mathop{\sum}\limits_{j=0}^{m_s}\binom{-\frac{1}{2}}{m_s-j}\binom{-\frac{1}{2}}{j} = \{x^{m_s}\}\Big[\frac{1}{\sqrt{1+x}}\cdot \frac{1}{\sqrt{1+x}}\Big] =\{x^{m_s}\}\Big[\frac{1}{1+x}\Big] = (-1)^{m_s}.
$$
\begin{multline}
\mathop{\sum}\limits_{j=0}^{m_s}(m_s-j)(j+1)\binom{-\frac{1}{2}}{m_s-j}\binom{-\frac{1}{2}}{j+1} = \{x^{m_s-1}\}\Big[\left(\frac{1}{\sqrt{1+x}}\right)^\prime\cdot \left(\frac{1}{\sqrt{1+x}}\right)^\prime\Big] =\\=\{x^{m_s-1}\}\Big[\frac{1}{4(1+x)^3}\Big] = \frac{(-1)^{m_s+1}m_s(m_s+1)}{8}.
\end{multline}

This proves \eqref{eq:det_asympt_even}.
\end{proof}

We remark that if $m_s = 1$ and the operator $L$ has no monodromy at $x=s$, then the operator $L-\la P$ has no monodromy around $x=s$ for any $\la$ if and only if \eqref{eq:residue_formula} holds.

When $L$ is $\la$-monodromy free, one can give the following heuristic argument for the formula for the residue \eqref{eq:residue_formula}. It is easy to see the formula for a simple pole, namely, for $m_s = 1$.  All other cases can be obtained by a confluence of a triangular number (which is prescribed by Proposition \ref{prop:mf_det}) of such simple poles, see Corollary \ref{cor:mf_limit}. Note, however, that our proof of Theorem \ref{thm:DO_to_populations} and Corollary \ref{cor:mf_limit} uses Proposition \ref{prop:mf_residue}.

The next standard lemma describes functions in the kernel of a monodromy free Schr\"odinger operator as a (square root of) rational function.

\begin{lemma}\label{lemma:rat_ker}
    Let $L=\partial^2-U$ be a Fuchsian operator 
    such that the monodromy around any pole of $U$ is $\pm\mathbb{1}$. 
    For $s\in \C$, let $m_s\in\frac12\Zp$ be the modified exponent of $L$ at $x=s$. 
    Then any $\psi\in \Ker(L)$ has the form
    $$
    \psi(x) = c\prod_{s\in \C}(x-s)^{\mu_s},
    $$
    where $\mu_s \in \{-m_s, m_s{+}1\} \subset \frac{1}{2}\Z$, $\mu_s$ is non-zero only for a finite number of points and $c\in \C$. 

    For any $s\in \C$ there exists a unique up to multiplication by a non-zero complex number solution $\psi$ with $\mu_s=m_s+1$. In particular, for a generic function in $\Ker(L)$, we have $\mu_{s} = -m_s$ for any $s$ such that $m_s \neq 0$.
\end{lemma}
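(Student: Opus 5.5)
The argument is local at each singular point, followed by a global reconstruction of $\psi$ from its logarithmic derivative.

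\emph{Local form.} Fix $s\in\C$ and let $-m_s$, $m_s+1$ be the exponents of $L$ at $s$. If $U$ is regular at $s$, then $m_s=0$ and every solution is holomorphic near $s$. If $s$ is a pole of $U$, the monodromy at $s$ is $\pm\mathbb{1}$, so by the Frobenius analysis in the proof of Proposition \ref{prop:mf_det} there is a basis $(x-s)^{-m_s}h_1$, $(x-s)^{m_s+1}h_2$ of local solutions, with $h_1,h_2$ holomorphic and $h_i(s)=1$. The exponents differ by the integer $2m_s+1$, so both solutions have the same monodromy factor $(-1)^{2m_s}$ and no logarithms occur.

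Take a nonzero $\psi\in\Ker(L)$. Near $s$ it equals $(x-s)^{\nu}h$ with $h$ holomorphic and $h(s)\neq 0$. Here $\nu=-m_s$ if the coefficient of the first basis element is nonzero. Otherwise $\nu=m_s+1$, since the second basis element is the only solution vanishing to that order.

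\emph{Nonvanishing of the holomorphic part.} We need $h$ to have no zeros in a punctured neighborhood of $s$, and this is the key point. At any regular point $x_0$ of $U$, a nonzero solution can vanish only to first order, because $\psi(x_0)=\psi'(x_0)=0$ forces $\psi\equiv 0$. Consequently $g=\psi'/\psi$ is a single-valued meromorphic function on $\C$ whose poles are simple, with residue equal to the local exponent.

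\emph{Globalization.} The potential $U$ is rational, so $\psi$ has at most polynomially bounded growth in sectors at $\infty$ and its zeros cannot accumulate. Because $L$ is Fuchsian, $\infty$ is a regular singular point: $xU\to 0$ gives exponents $0$ and $1$ at infinity in the variable $x$, and monodromy around $\infty$ is the product of the finite monodromies, hence $\pm\mathbb{1}$. So $\psi$ is meromorphic at $\infty$ up to a factor of the form $x^{\text{half-integer}}$.

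Hence $g=\psi'/\psi$ is meromorphic on $\mathbb{P}^1$, that is, rational. It has simple poles with residues $\mu_s\in\{-m_s,m_s+1\}$ at the singular points, residue $1$ at any extra zeros, and $g=O(1/x)$ at infinity. Therefore
$$g=\sum_s\frac{\mu_s}{x-s},$$
with only finitely many $\mu_s$ nonzero. Here an ordinary zero at a regular point is exactly the case $\mu_s=1=m_s+1$ with $m_s=0$. Integrating $g$ gives $\psi=c\prod(x-s)^{\mu_s}$. The fact that $g$ has only finitely many poles comes from rationality, which in turn follows from the behavior at $\infty$; this step is the main obstacle, and I would justify it via the regular singularity at $\infty$ of a Fuchsian operator.

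\emph{Uniqueness and genericity.} The solutions with $\mu_s=m_s+1$ form the one-dimensional subspace spanned by the subdominant Frobenius solution at $s$. A generic $\psi$ avoids the finitely many lines corresponding to the points $s$ with $m_s\neq 0$. For such $\psi$ we get $\mu_s=-m_s$ at every such $s$, which completes the statement.
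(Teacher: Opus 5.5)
The paper does not prove this lemma: it is called standard and stated without proof, so there is no argument of the authors to compare with. Your proof is a correct version of the standard argument. Its steps are:
\begin{itemize}
\item Frobenius at the poles, where scalar monodromy rules out logarithms.
\item Simple zeros at ordinary points.
\item Single-valuedness of $\psi'/\psi$. State explicitly that the monodromy group is generated by the local monodromies, all equal to $\pm\mathbb{1}$.
\item Regular singularity at $\infty$ together with monodromy $\pm\mathbb{1}$ there. This makes $\psi'/\psi$ a rational function with simple poles vanishing at $\infty$, hence equal to $\sum_s \mu_s/(x-s)$.
\end{itemize}

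One side claim is false, although you never use it: $xU\to 0$ does not give exponents $0$ and $1$ at $\infty$. For rational $U$ it only gives $U=a_\infty x^{-2}+O(x^{-3})$. The growth orders of solutions at $\infty$ are then the roots of $\rho(\rho-1)=a_\infty$. For example, $U=m(m+1)/x^2$ is Fuchsian in the paper's sense, with solutions $x^{-m}$ and $x^{m+1}$.

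What your argument actually needs is only that $\infty$ is a regular singular point with monodromy $\pm\mathbb{1}$. This gives $\psi=x^{\rho_0}F$ with $\rho_0\in\{0,\tfrac12\}$ and $F$ meromorphic at $\infty$. From this you get both the finiteness of the set of zeros and $\psi'/\psi=O(1/x)$.

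The paragraph you call the key point is slightly misaimed. The condition $h(s)\neq 0$ already makes $h$ nonvanishing near $s$. The real content is that zeros at ordinary points are simple, and that there are only finitely many of them, which comes from the behaviour at $\infty$.

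An equivalent and slightly more direct packaging is to note that $\psi^2$ is single-valued. The same local analysis shows it is meromorphic at every point of $\mathbb{P}^1$, hence rational, and you can read off $2\mu_s$ as its orders.
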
\qed

\subsection{Darboux transformations}

Let $\psi \in  \Ker(L)\backslash\{0\}$. There is a factorization
$$
L = \frac{1}{P}\Big(\pd + \ln'(\psi)\Big)\Big(\pd - \ln'(\psi)\Big)=  \frac{1}{\sqrt{P}}\Big(\pd + \ln'(\sqrt{P}\psi)\Big)\frac{1}{\sqrt{P}}\Big(\pd - \ln'(\psi)\Big).
$$

\begin{defi}

    The operator
    \begin{equation}\label{eq:def_Darboux}
        L^{\psi} := \frac{1}{\sqrt{P}}\Big(\pd - \ln'\big(\psi\big)\Big)\frac{1}{\sqrt{P}}\Big(\pd + \ln'\big(\sqrt{P}\psi\big)\Big)= \frac{1}{P}\Big(\pd - \ln'\big(\sqrt{P}\psi\big)\Big)\Big(\pd + \ln'\big(\sqrt{P}\psi\big)\Big),
    \end{equation}
    is called Darboux transformation of $L$ with respect to $\psi$. We denote the potential of $L^{\psi}$ by $U^{\psi}$.
\end{defi}

Denote $$D_{\psi} = \frac{1}{\sqrt{P}}\Big(\pd - \ln'(\psi)\Big).$$
A few simple properties of Darboux transformations are given in the lemma.

\begin{lemma}\label{lemma:Darboux_prop}
    We have
    \begin{enumerate}
        \item $L^{\psi}D_{\psi} = D_{\psi}L$, \label{lemma:Darboux_prop1}
        \item If $\phi \in \Ker(L-\la)$, then $D_{\psi}(\phi) \in \Ker(L^{\psi}-\la)$.\label{lemma:Darboux_prop2} 
        \item For any $\la\neq 0$, the map $D_{\psi}:\ \Ker(L-\la)\to\Ker(L^{\psi}-\la)$ is an isomorphism.\label{lemma:Darboux_prop3}\qed
    \end{enumerate}
\end{lemma}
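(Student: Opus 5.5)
The plan is to exhibit both $L$ and $L^{\psi}$ as products of the same two first order operators in opposite orders; all three statements then follow formally. Set
$$
A_{\psi}:=\frac{1}{\sqrt{P}}\Big(\pd+\ln'\big(\sqrt{P}\psi\big)\Big).
$$
The factorization displayed just before the definition of the Darboux transformation says precisely $L=A_{\psi}D_{\psi}$. To confirm it, commute $\pd$ past $1/\sqrt{P}$:
$$
\Big(\pd+\ln'(\sqrt{P}\psi)\Big)\frac{1}{\sqrt{P}}=\frac{1}{\sqrt{P}}\Big(\pd+\ln'(\sqrt{P}\psi)-\ln'(\sqrt P)\Big)=\frac{1}{\sqrt P}\Big(\pd+\ln'(\psi)\Big).
$$
Then use $(\pd+\ln'\psi)(\pd-\ln'\psi)=\pd^2-\psi''/\psi$, together with $\psi''=U\psi$ (because $\psi\in\Ker(L)$). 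Comparing with \eqref{eq:def_Darboux}, the definition reads $L^{\psi}=D_{\psi}A_{\psi}$.

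For (1), I compute
$$
L^{\psi}D_{\psi}=D_{\psi}A_{\psi}D_{\psi}=D_{\psi}L.
$$
For (2), if $(L-\la)\phi=0$, then (1) gives
$$
(L^{\psi}-\la)D_{\psi}\phi=D_{\psi}(L-\la)\phi=0.
$$
Throughout, kernels are understood as spaces of local analytic solutions on a simply connected domain avoiding the zeroes of $P$ and the singularities of $U$, $U^{\psi}$ and $\ln'\psi$; each of these spaces is two-dimensional.

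For (3), I construct an explicit inverse. Symmetrically to (1), we have
$$
LA_{\psi}=A_{\psi}D_{\psi}A_{\psi}=A_{\psi}L^{\psi},
$$
so $A_{\psi}$ maps $\Ker(L^{\psi}-\la)$ into $\Ker(L-\la)$. On $\Ker(L-\la)$ we get $A_{\psi}D_{\psi}=L=\la$, and on $\Ker(L^{\psi}-\la)$ we get $D_{\psi}A_{\psi}=L^{\psi}=\la$. Hence, for $\la\neq0$, the map $\la^{-1}A_{\psi}$ is a two-sided inverse of $D_{\psi}$, and $D_{\psi}$ is an isomorphism. Equivalently, one could argue injectivity: $D_{\psi}\phi=0$ forces $\la\phi=L\phi=0$. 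Surjectivity then follows because both kernels are two-dimensional.

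The only step needing any care is the bookkeeping of the $\sqrt{P}$ factors in the factorization identity. Everything else is formal manipulation of operator compositions.
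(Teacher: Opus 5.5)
Your proof is correct, and it is the argument the paper leaves implicit. The lemma is stated without proof, right after the factorization $L=A_\psi D_\psi$ with $A_\psi=\frac{1}{\sqrt P}(\pd+\ln'(\sqrt P\psi))$ and the definition $L^\psi=D_\psi A_\psi$; your check of the $\sqrt P$ bookkeeping, your use of $\psi''=U\psi$, and your explicit inverse $\la^{-1}A_\psi$ for $\la\neq 0$ supply exactly the details the paper omits.
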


    Note that in general  the potential $U^{\psi}$  of $L^{\psi} = P^{-1}(\pd^2 - U^{\psi})$is not necessarily rational.

\begin{lemma}\label{lemma:Darboux_MF}
    If $L$ is $\la$-monodromy free Fuchsian Schr\"odinger operator and $\psi\in \Ker(L)\backslash \{0\}$, then $L^{\psi}$ is $\la$-monodromy free Fuchsian Schr\"odinger operator.
\end{lemma}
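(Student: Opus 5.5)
Write $\psi=\prod_s(x-s)^{\mu_s}$, which Lemma \ref{lemma:rat_ker} allows since $L$ is monodromy free at $\la=0$. We have $\mu_s\in\{-m_s,m_s+1\}$, and $\mu_s\neq 0$ only at finitely many points. The plan has three steps: show the new potential is rational and Fuchsian, check the exponent condition of Proposition \ref{prop:mf_det}, and deduce the absence of monodromy from the intertwining relation of Lemma \ref{lemma:Darboux_prop}.

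\textbf{Step 1: rationality and the Fuchsian property.} Put $g=\ln'(\sqrt P\psi)=\frac{P'}{2P}+\sum_s\frac{\mu_s}{x-s}$, which is rational. By \eqref{eq:def_Darboux}, $U^\psi=g^2+g'$, whereas $U=h^2+h'$ with $h=\ln'\psi$. Hence $U^\psi$ is rational with at most double poles. At infinity, $g=O(1/x)$, so $g^2+g'=O(1/x^2)$ and $xU^\psi\to 0$. Thus $L^\psi$ is Fuchsian.

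\textbf{Step 2: local exponents.} At a point $s$ write $\nu_s=\mu_s+\tfrac12\mathrm{ord}_s P$, the order of $\sqrt P\psi$ at $s$. The leading coefficient of $U^\psi$ at $s$ is $\nu_s(\nu_s-1)$. This has the required form $m(m+1)$ with $m=|\nu_s-\tfrac12|-\tfrac12\in\tfrac12\Z_{\ge0}$, because $\nu_s\in\tfrac12\Z$.

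\textbf{Step 3: absence of monodromy.} For $\la\neq0$, Lemma \ref{lemma:Darboux_prop}(3) shows that $D_\psi$ maps $\Ker(L-\la)$ isomorphically onto $\Ker(L^\psi-\la)$. Note that $\Ker(L-\la)=\Ker(\partial^2-U-\la P)$. Now $D_\psi=P^{-1/2}(\partial-\ln'\psi)$ has coefficients that are single-valued up to the factor $P^{-1/2}$, which has monodromy $\pm1$ around each point. Therefore if all solutions of $\partial^2-U-\la P$ have monodromy $\pm\mathbb 1$ around $s$, then every element of $\Ker(L^\psi-\la)$ is multiplied by the same sign $\pm1$. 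This gives monodromy $\pm\mathbb 1$ for every $\la\neq0$.

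For $\la=0$ I will not write a separate argument. The vanishing of $\Delta(s,\la)$ in Proposition \ref{prop:mf_det} is a polynomial identity in $\la$. It holds for all $\la\ne0$, so it holds at $\la=0$ as well. Alternatively, one can check directly that $\Ker L^\psi$ is spanned by $1/(\sqrt P\psi)$ and $\frac{1}{\sqrt P\psi}\int P\psi^2$. The second function is single-valued up to sign provided the residues of $P\psi^2$ vanish, and that in turn follows from the $\la=0$ case via the determinant criterion. I would use the polynomial identity.

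\textbf{Main obstacle.} The delicate point is the hypothesis of Proposition \ref{prop:mf_det}, which requires the exponent data at every pole of $U^\psi$ to be as stated. This includes the zeros of $P$, where $\mathrm{ord}_sP$ may be odd. I will check that the Frobenius exponents of $\partial^2-U^\psi-\la P$ at $s$ are $1-\nu_s$ and $\nu_s$. Their difference $2\nu_s-1$ lies in $\Z$, so in the determinant criterion the relevant resonance integer is $|2\nu_s-1|$, which is consistent with Step 2.

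When $\nu_s=\tfrac12$ the exponents coincide and a logarithm would appear. I will rule this out: by Step 3, for $\la\neq0$ the monodromy is $\pm\mathbb 1$, and a repeated exponent forces a logarithmic solution, contradicting this. So the case $\nu_s=\tfrac12$ cannot occur.
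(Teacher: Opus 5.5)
Your argument is the paper's: rationality and the Fuchsian property come from the explicit form of $U^\psi$. For $\la\neq 0$, the $\pm$ identity monodromy is transported through the isomorphism $D_\psi$ of Lemma \ref{lemma:Darboux_prop}. The case $\la=0$ follows because $\Delta(s,\la)$ is polynomial in $\la$ while $a_{-2},a_{-1}$ do not depend on $\la$. Your explicit exclusion of a repeated exponent via the no-logarithm argument is a detail the paper leaves implicit.

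One sign slip needs fixing. Since $(\partial-g)(\partial+g)=\partial^2-(g^2-g')$, we get $U^\psi=g^2-g'$, not $g^2+g'$. Hence the leading coefficient at $s$ is $\nu_s(\nu_s+1)$ and the exponents are $-\nu_s$ and $\nu_s+1$, consistent with $1/(\sqrt P\psi)\in\Ker L^\psi$. The degenerate case to rule out is therefore $\nu_s=-\tfrac12$, not $\tfrac12$; this is the case $m_{z_j}=\tfrac{k_j+1}{2}$ of Lemma \ref{lemma:exp_obst}. Your no-logarithm argument handles it unchanged.
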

\begin{proof}
    By Lemma \ref{lemma:rat_ker} and \eqref{eq:def_Darboux}, the potential $U^{\psi}$ of $L^{\psi} = \frac{1}{P}(\pd^2 - U^{\psi})$ is rational and Fuchsian.

    For any $\la\neq 0$ the operator $\partial^2-U^{\psi}-\lambda P$ has monodromy $\pm \mathbb{1}$  by  Lemma \ref{lemma:Darboux_prop}, Part \ref{lemma:Darboux_prop3}. 

    For $\la = 0$ the statement follows by continuity. Namely, the condition on $L^{\psi}$ to be $\la$-monodromy free is a set of algebraic conditions $\Delta(s,\la)=0$ on the coefficients of expansion of $U^{\psi}-\la P$, see  Proposition \ref{prop:mf_det}. Given $s\in\C$, the determinant $\Delta(s,\la)$ vanishes for any $\la\neq 0$, therefore it vanishes for $\la = 0$.
\end{proof}

\begin{remark}
    There exist monodromy free Fuchsian Schr\"odinger operators such that their Darboux transformations are not monodromy free.
\end{remark}
\begin{example}
    Consider an operator
\begin{equation}
    L = \pd^2 -\frac{2}{x^2(x-1)^2},
\end{equation}
We have $P(x) = 1$ and $\Ker(L) = \langle x-2+\frac{1}{x} , (1-x)-2+\frac{1}{1-x}\rangle$. In particular, $L$ is monodromy free. 

Let $\psi = x-2+\frac{1}{x}$. Then
$$
L^{\psi} = \pd^2 -\frac{2(x+2)}{x(x-1)^2},\qquad \Ker(L^{\psi}) = \Big\langle\frac{x}{2 (x-1)^2}, \ \frac{x^4-6 x^3+18 x^2-3}{6 (x-1)^2}-\frac{2 x \log (x)}{(x-1)^2}\Big\rangle. $$ 
The operator $L^{\psi}$ is Fuchsian but not monodromy free. In particular, $L$ is not $\lambda$-monodromy free.

Moreover, the Darboux transformation $(L^{\psi})^{\td \psi}$ of $L^{\psi}$, where $\td\psi = \frac{x^4-6 x^3+18 x^2-3}{6 (x-1)^2}-\frac{2 x \log (x)}{(x-1)^2}$, is a Schr\"odinger operator with a non-rational potential and it is not Fuchsian.
\end{example}

\subsection{The exponents of the Darboux transformed operators}

The next lemma computes the dependence of the exponents of the Darboux transformed operator on the choice of an element in $\Ker(L)$.

Let 
\begin{equation}\label{eq:roots_P}
P(x) = \prod_{j=1}^{n}(x-z_j)^{k_j}, 
\end{equation}
where $k_j\in \Z_{>0}$ for all $j$.

\begin{lemma}\label{lemma:DT_general} Let $L$ be a $\lambda$-monodromy free operator.
    Let $\psi = \prod_{s\in \C}(x-s)^{\mu_s} \in \Ker(L)$.  Then $L^{\psi} = P^{-1}(\pd^2 - U^{\psi})$, where
    $$
    U^{\psi} = \sum_{j=1}^n\left(\frac{\tilde{m}_{z_j}(\tilde{m}_{z_j}+1)}{(x-z_j)^2} + \frac{\al_j}{x-z_j}\right)+ \sum_{s\in \C: P(s)\neq 0} \frac{\tilde{m}_s(\tilde{m}_s + 1)}{(x-s)}\left(\frac{1}{x-s} +\frac{P'(s)}{2P(s)}\right),
    $$
    and
    \begin{equation}\label{eq:exponents_DT}
    \tilde{m}_{z_j} = \max\left(\frac{k_j}{2}+\mu_{z_j}, \ -1-\frac{k_j}{2}-\mu_{z_j}\right), \qquad 
    \tilde{m}_s = \begin{cases}
        m_s-1, &\mu_s = -m_s,\\
        m_s+1, & \mu_s=m_s+1.
    \end{cases}
    \end{equation}
    
    Moreover, the residues $\al_j$ satisfy
    $$
    \sum_{j=1}^n \al_j + \sum_{s\in \C: P(s)\neq 0} \frac{\tilde{m}_s(\tilde{m}_s + 1)P'(s)}{2P(s)} = 0.
    $$
\end{lemma}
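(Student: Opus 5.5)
We compute $U^{\psi}$ directly from \eqref{eq:def_Darboux}. Put $\varphi=\sqrt{P}\psi$ and $g=\ln'(\varphi)$. Then $L^{\psi}=P^{-1}(\pd-g)(\pd+g)=P^{-1}(\pd^2+g'-g^2)$, so $U^{\psi}=g^2-g'$. This is the Riccati form of the statement that $1/\varphi$ lies in $\Ker(\pd^2-U^{\psi})$. By Lemma \ref{lemma:rat_ker}, $\psi=c\prod_s(x-s)^{\mu_s}$ with $\mu_s\in\{-m_s,m_s+1\}$. Hence
$$\varphi=c\prod_{j}(x-z_j)^{k_j/2+\mu_{z_j}}\prod_{s:P(s)\ne0}(x-s)^{\mu_s},$$
and
$$g=\sum_j\frac{\nu_j}{x-z_j}+\sum_{s:P(s)\ne0}\frac{\mu_s}{x-s},\qquad \nu_j=\tfrac{k_j}{2}+\mu_{z_j}.$$
So $U^{\psi}$ is rational. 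Its double poles have coefficient $\nu(\nu+1)$ with $\nu$ the corresponding exponent of $\varphi$. At infinity $g=O(1/x)$, so $g^2-g'=O(x^{-2})$ and $xU^{\psi}\to0$. Thus $U^{\psi}$ is Fuchsian, and its partial fraction expansion consists only of the terms $\frac{\nu(\nu+1)}{(x-s)^2}+\frac{\beta_s}{x-s}$.

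\textbf{Double poles.} At $s=z_j$ the coefficient is $\nu_j(\nu_j+1)$. Writing it as $\tilde m(\tilde m+1)$ with $\tilde m\ge -1/2$ gives $\tilde m_{z_j}=\max(\nu_j,-1-\nu_j)$, which is the formula \eqref{eq:exponents_DT}. At $s$ with $P(s)\neq0$ the coefficient is $\mu_s(\mu_s+1)$. If $\mu_s=-m_s$ this equals $(m_s-1)m_s$, so $\tilde m_s=m_s-1$. If $\mu_s=m_s+1$ it equals $(m_s+1)(m_s+2)$, so $\tilde m_s=m_s+1$. When $m_s=0$, both cases give $\tilde m_s(\tilde m_s+1)=0$ or $2$; this is consistent, since $\tilde m_s=-1$ contributes zero just like $\tilde m_s=0$.

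\textbf{Simple poles.} At the $z_j$ the residues $\al_j$ are whatever $g^2-g'$ produces; the statement makes no claim about them. At $s$ with $P(s)\ne0$ we need the residue to be $\tilde m_s(\tilde m_s+1)P'(s)/(2P(s))$. The plan is to invoke Lemma \ref{lemma:Darboux_MF}: $L^{\psi}$ is $\la$-monodromy free. Proposition \ref{prop:mf_residue} applied to $L^{\psi}$ at $s$ then gives exactly this residue, with $\tilde m_s$ the modified exponent of $L^{\psi}$ at $s$, which was identified above. This handles all points including $\tilde m_s=0$: there the residue must be $0$, which also follows from regularity.

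\textbf{Residue sum.} Since $xU^{\psi}\to0$ at infinity, the coefficient of $1/x$ in the expansion at $\infty$ vanishes. That coefficient is the sum of all residues, which gives the final identity. This only needs the Fuchsian decay, which we already have because $g=O(1/x)$.

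The main obstacle is the simple-pole residues away from zeroes of $P$. A direct computation from $g^2-g'$ would give residue $2\mu_s\sum_{t\ne s}\mu_t/(s-t)$, a global expression, and showing it equals the local formula directly would be painful. Routing through Lemma \ref{lemma:Darboux_MF} and Proposition \ref{prop:mf_residue} avoids this, so we only need to check that their hypotheses apply; in particular, Lemma \ref{lemma:Darboux_MF} requires $\psi\neq0$ in $\Ker(L)$.
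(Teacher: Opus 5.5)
Your proposal is correct and is essentially the paper's proof: the double-pole coefficients come from a direct computation (your formula $U^{\psi}=g^2-g'$ with $g=\ln'(\sqrt{P}\psi)$ makes this explicit), the residues at points with $P(s)\neq 0$ come from Lemma \ref{lemma:Darboux_MF} together with Proposition \ref{prop:mf_residue}, and the relation on the $\al_j$ comes from the vanishing of the residue of $U^{\psi}$ at infinity. The only loose point is your aside that a zero residue when $\tilde m_s=0$ ``also follows from regularity'': when $\mu_s=-1$, the expression $g^2-g'$ can a priori have a simple pole at $s$, so it is the $\la$-monodromy free property (via Proposition \ref{prop:mf_residue}, as in your main argument) that removes it.
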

\begin{proof}
    The leading terms of the expansion of $U^{\psi}$ at $s$ are obtained by a direct computation.  

    By Lemma \ref{lemma:Darboux_MF} the operator $L^{\psi}$ is $\la$-monodromy free. Therefore the residues of $U^{\psi}$ at points $s$ such that $P(s)\neq 0$ are computed by Proposition \ref{prop:mf_residue}. 
    
    The property on residues $\al_j$ holds since $\underset{x=\infty}{\mathrm{Res}}(U^{\psi})=0$.
\end{proof}

\begin{lemma}\label{lemma:exp_obst}
    Let $L$ be a $\la$-monodromy free operator. Then for any zero $z_j$ of $P$ the corresponding modified exponent $m_{z_j}$ does not belong to $-\frac{1}{2}+ (\frac{k}{2}+ 1)\Z_{>0}$. 
\end{lemma}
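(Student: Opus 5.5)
The plan is to argue by contradiction, using the local criterion of Proposition \ref{prop:mf_det} at a zero $z=z_j$ of $P$. Write $k=k_j$ and $m=m_{z}$, and suppose that $2m+1=(k+2)q$ for some $q\in\Z_{>0}$. I will show that the obstruction polynomial $\Delta(z,\la)$ then has a nonzero coefficient of $\la^q$. This contradicts the requirement that $\Delta(z,\la)$ vanish identically.

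Set $t=x-z$ and write $P=c\,t^k(1+O(t))$ with $c\neq 0$. The Frobenius ansatz for the exponent $-m$ is $t^{-m}\sum_j b_j t^j$ with $b_0=1$. The recursion has the form
$$
\big((j-m)(j-m-1)-m(m+1)\big)\,b_j \;=\; j\,(j-2m-1)\,b_j \;=\; \sum_{i\ge 1} a_{i-2}(z,\la)\,b_{j-i},
$$
where the $\la$-dependence enters only through $a_r(z,\la)$ with $r\ge k$. Moreover, each such coefficient is linear in $\la$: the lowest one, $a_k$, has $\la$-part $c\la$, and the higher $a_r$ have $\la$-parts that are multiples of $\la$. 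The obstruction, which is proportional to $\Delta(z,\la)$, is the value of the right-hand side at the resonant index $j=2m+1$.

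The key step is a weight count. Expand $b_j$ as a polynomial in $\la$. Each factor of $\la$ in a monomial is produced by a term that advances the index by at least $k+2$. Hence the coefficient of $\la^p$ in $b_j$ vanishes when $p(k+2)>j$. When $p(k+2)=j$ exactly, that coefficient comes only from the truncated model operator
$$
\partial_t^2-\frac{m(m+1)}{t^2}-c\la t^k.
$$
The reason is that every other contribution, whether from the $a_{-1}$ term, the higher Taylor coefficients of $U$, or the higher terms of $P$, uses up index without producing a factor of $\la$. Consequently the coefficient of $\la^q$ in the obstruction at $j=2m+1=q(k+2)$ equals the corresponding obstruction for the model operator.

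For the model operator the recursion is explicit. The only nonzero coefficients are $b_{(k+2)r}$, and they satisfy
$$
(k+2)r\big((k+2)r-2m-1\big)\,b_{(k+2)r}=c\la\, b_{(k+2)(r-1)}.
$$
For $r<q$ the factors on the left are nonzero, so $b_{(k+2)(q-1)}=\la^{q-1}c^{q-1}/\prod_{r<q}(k+2)r((k+2)r-2m-1)$, which is nonzero. At $r=q$ the left side vanishes while the right side equals $c\la\cdot b_{(k+2)(q-1)}$, a nonzero multiple of $\la^q$. Therefore $\Delta(z,\la)$ has a nonzero $\la^q$ coefficient, and this contradicts Proposition \ref{prop:mf_det}.

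I expect the main obstacle to be making the weight-count bookkeeping rigorous. One has to identify the Frobenius obstruction with $\Delta(z,\la)$ up to a nonzero factor, which can be done as in the proof of Proposition \ref{prop:mf_residue}. One also has to check carefully that any monomial containing a non-model coefficient uses strictly more index per power of $\la$. The same reasoning covers the half-integer case, where $2m+1$ is even.
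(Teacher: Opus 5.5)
Your proof is correct, but it is not the paper's route. The paper inducts on $q$ via Darboux transformations. A generic $\psi\in\Ker(L)$ has $\mu_{z_j}=-m_{z_j}$, so by \eqref{eq:exponents_DT} the operator $L^{\psi}$, which is again $\la$-monodromy free by Lemma \ref{lemma:Darboux_MF}, has $\tilde m_{z_j}=m_{z_j}-\frac{k_j}{2}-1$ whenever $m_{z_j}>\frac{k_j+1}{2}$. This lowers $q$ by one while staying inside the forbidden set. In the base case $m_{z_j}=\frac{k_j+1}{2}$ one gets $\tilde m_{z_j}=-\frac12$, contradicting part 1 of Proposition \ref{prop:mf_det}. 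Alternatively, $\Delta(z_j,\la)$ is then linear in $\la$ with nonzero slope, since $\la$ enters only through the bottom-left entry $a_{k_j}$.

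Your weight count is that base-case computation carried out for all $q$ at once. It shows $\deg_\la\Delta(z_j,\la)=q$, with an explicit top coefficient coming only from $a_{-2}$ and the $\la$-part of $a_{k_j}$. The paper's argument is shorter, reuses Lemmas \ref{lemma:Darboux_MF} and \ref{lemma:DT_general}, and explains the forbidden set as the orbit of $\frac{k_j+1}{2}$ under the shift $m\mapsto m+\frac{k_j}{2}+1$ induced by Darboux transformations. Yours is purely local at $z_j$ and needs neither the Darboux machinery nor rationality of $\Ker(L)$. It is the direct analogue of the proof of Proposition \ref{prop:mf_residue}: for $k=0$ it reproduces the degree-$(m_s+\frac12)$ term \eqref{eq:det_asympt_odd}.

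The bookkeeping you were worried about is a routine induction on $j$. Every product $a_{i-2}b_{j-i}$ in the $j$-th equation has $\la$-degree at most $\lfloor j/(k+2)\rfloor$. At $j=p(k+2)$ equality holds only for $i=k+2$ using the $\la$-part of $a_k$, and the divisor $j(j-2m-1)$ is the same as for the model operator. You also do not need to match normalizations with $\Delta$: the Frobenius obstruction at $j=2m+1$ by itself decides whether a log-free solution with exponent $-m$ exists.
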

\begin{proof}
     Take a generic element $\psi\in \Ker(L)$ and apply the Darboux transformation. Then if $m_{z_j} > \frac{k_j+1}{2}$ by formula \eqref{eq:exponents_DT}  we have $\tilde{m}_{z_j} = m_{z_j}-\frac{k}{2}-1$. In this case we can reduce $\tilde{m}_{z_j}$. If $m_{z_j} = \frac{k_j+1}{2}$ after Darboux transformation we obtain $\tilde{m}_{z_j} = -\frac{1}{2}$. This contradicts part 1 of Proposition \ref{prop:mf_det}. Alternatively, if $m_{z_j} = \frac{k_j+1}{2}$, we have
    $$
    \Delta(z_j, \la) = (-1)^{k_j +1}((k_j +1)!)^2\la + c\not\equiv 0,
    $$
    (cf. part 2 of Proposition \ref{prop:mf_det}). This contradicts the operator being $\la$-monodromy free. 
\end{proof}

In the following two lemmas we discuss the exponents of a $\la$-monodromy free operator after a sequence of generic Darboux transformations. 

\begin{lemma}\label{lemma:ms01}
    Let $L$ be a $\la$-monodromy free operator. Then by a finite sequence of Darboux transformations $L$ can be brought to an operator such that for all $s\in \C  $ with $P(s)\neq 0$ the modified exponents $m_s\in\{0,1\}$.
\end{lemma}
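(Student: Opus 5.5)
We reduce the modified exponents at points outside the zero set of $P$ by repeated generic Darboux transformations, tracking them with formula \eqref{eq:exponents_DT} of Lemma \ref{lemma:DT_general}. For a $\la$-monodromy free operator $L$, Proposition \ref{prop:mf_residue} says that every pole $s$ of $U$ with $P(s)\neq0$ has $m_s\in\Z_{\geq0}$. Only finitely many $s$ have $m_s\neq 0$. So the quantity
$$N(L)=\sum_{s:\,P(s)\neq 0,\ m_s\geq 2}(m_s-1)$$
is a well-defined non-negative integer. The plan is to show that a suitable Darboux transformation strictly decreases $N$ whenever $N(L)>0$; induction on $N$ then gives the lemma.

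First take a generic $\psi\in\Ker(L)$. By Lemma \ref{lemma:rat_ker}, $\mu_s=-m_s$ at every $s$ with $m_s\neq0$. By Lemma \ref{lemma:Darboux_MF}, $L^{\psi}$ is again $\la$-monodromy free, so the induction stays inside the class of operators considered.

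By \eqref{eq:exponents_DT}, at each $s$ with $P(s)\neq 0$ and $m_s\geq 1$ the new exponent is $\tilde m_s=m_s-1$. At points $s$ with $m_s=0$ and $P(s)\neq0$ we have $\mu_s\in\{0,1\}$. The value $\mu_s=1=m_s+1$ occurs exactly at the zeros of $\psi$ outside the poles of $U$, and there $\tilde m_s=1$. Thus every exponent $m_s\geq2$ drops by one, and new exponents are created only with value $1$. Hence $N(L^{\psi})=N(L)-\#\{s: m_s\geq 2\}<N(L)$ whenever $N(L)>0$.

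The exponents at the zeros $z_j$ of $P$ may change in an uncontrolled way, but the statement imposes nothing on them, so they can be ignored. It is also harmless that exponents equal to $1$ become $0$ and that new exponents equal to $1$ appear, since neither contributes to $N$.

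The one delicate point is the case $m_s=0$ with $\mu_s=0$. Formula \eqref{eq:exponents_DT} literally gives $\tilde m_s=-1$ there, which must be read as $\tilde m_s(\tilde m_s+1)=0$, i.e. a regular point with modified exponent $0$. One should check directly from \eqref{eq:def_Darboux} that $U^{\psi}$ is regular at such a point. This holds because $\psi$ and $\sqrt P$ are regular and nonvanishing near $s$.
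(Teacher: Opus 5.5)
Your proof is correct and follows the paper's argument. Both apply a Darboux transformation with respect to a generic $\psi\in\Ker(L)$, so that by \eqref{eq:exponents_DT} every nonzero $m_s$ with $P(s)\neq 0$ drops by one, while the only new exponents created away from the zeros of $P$ equal $1$. The paper inducts on $\max_{s:\,P(s)\neq 0} m_s$ instead of your $N(L)$, which makes no real difference, and your remark that $\tilde m_s=-1$ should be read as a regular point is a correct clarification of something the paper leaves implicit.
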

\begin{proof}
We use induction on $\displaystyle\max_{s\in\C:P(s)\neq 0}(m_s)$. Take a generic $\psi(x) \in \Ker(L)$, and apply the Darboux transformation. Then by formula \eqref{eq:exponents_DT}, we reduce all non-zero $m_s$ for $s$ such that $P(s)\neq 0$ by one, and create new poles $\tilde s$ such that $P(\td s)\neq 0$ and $m_{\td s}=1$.
\end{proof}

\begin{lemma}\label{lemma:m0}
    Let $L$ be a $\la$-monodromy free operator such that for any $s\in \C:P(s)\neq 0$, $m_s \in \{0,1\}$. Then by a finite sequence of Darboux transformations $L$ can be brought to an operator with $m_s\in\{0,1\}$ for all $s\in \C$ such that $P(s)\neq 0$ and $m_{z_j}\in \{0,\frac{1}{2},1,\dots, \frac{k_j}{2}\}$, $j = 1,\dots, n$.
\end{lemma}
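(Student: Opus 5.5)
We argue by induction on the quantity
\[
N(L)=\sum_{j=1}^n \max\Big(0,\ m_{z_j}-\tfrac{k_j}{2}\Big),
\]
and show that a suitable Darboux transformation lowers every $m_{z_j}$ with $m_{z_j}>\frac{k_j}{2}$ while keeping $m_s\in\{0,1\}$ at all points with $P(s)\ne 0$. The tool is formula \eqref{eq:exponents_DT} of Lemma \ref{lemma:DT_general}, together with the choice of exponents supplied by Lemma \ref{lemma:rat_ker}.

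\emph{One step.} Take a generic $\psi\in\Ker(L)$. By Lemma \ref{lemma:rat_ker}, $\mu_s=-m_s$ at every point with $m_s\neq 0$, and $\mu_s\in\{0,1\}$ where $m_s=0$.
\begin{itemize}
\item At a zero $z_j$ of $P$, \eqref{eq:exponents_DT} gives $\tilde m_{z_j}=\max\big(\frac{k_j}{2}-m_{z_j},\,m_{z_j}-\frac{k_j}{2}-1\big)$.
\item If $m_{z_j}>\frac{k_j}{2}$, Lemma \ref{lemma:exp_obst} excludes $m_{z_j}=\frac{k_j+1}{2}$. Hence $m_{z_j}\ge\frac{k_j}{2}+1$, and $\tilde m_{z_j}=m_{z_j}-\frac{k_j}{2}-1<m_{z_j}$.
\item If $m_{z_j}\le \frac{k_j}{2}$, then $\tilde m_{z_j}=\frac{k_j}{2}-m_{z_j}\in\{0,\frac12,\dots,\frac{k_j}{2}\}$, so the point stays in the admissible range.
\end{itemize}

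\emph{Controlling the other points.} At points $s$ with $P(s)\neq0$, genericity and \eqref{eq:exponents_DT} turn $m_s=1$ into $\tilde m_s=0$. At generic regular points $\mu_s=0$, so nothing happens there. The finitely many simple zeros of $\psi$ (where $\mu_s=1$, $m_s=0$) become new poles with $\tilde m_s=1$. So after one step the condition $m_s\in\{0,1\}$ for $P(s)\ne0$ is preserved. This uses only that the set of $\psi$ where some $\mu_s$ is nongeneric is a finite union of lines in the two-dimensional kernel.

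\emph{Termination.} Each step strictly decreases every positive term of $N(L)$ and sends each zero-contribution point to another admissible value. Since the $m_{z_j}$ lie in $\frac12\Zp$, after finitely many steps $N(L)=0$. Then $m_{z_j}\le\frac{k_j}{2}$ for all $j$, which is the claim. The resulting operator is again $\la$-monodromy free by Lemma \ref{lemma:Darboux_MF}, so the lemmas can be applied at each stage.

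\emph{Main obstacle.} The delicate point is that the genericity in Lemma \ref{lemma:rat_ker} holds simultaneously for all finitely many poles, including the zeros $z_j$ of $P$. Only then is the $\max$ in \eqref{eq:exponents_DT} evaluated with $\mu_{z_j}=-m_{z_j}$. I expect to handle this by noting that each pole excludes only one line in $\Ker(L)$. The parity obstruction $m_{z_j}=\frac{k_j+1}{2}$ is exactly what Lemma \ref{lemma:exp_obst} rules out; without it the exponent would land at $-\frac12$.
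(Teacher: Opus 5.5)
Your proposal is correct and follows essentially the same route as the paper. The paper also applies a Darboux transformation with respect to a generic $\psi\in\Ker(L)$, reads off the new exponents from \eqref{eq:exponents_DT}, and uses Lemma \ref{lemma:exp_obst} to force $\tilde m_{z_j}=m_{z_j}-\frac{k_j}{2}-1$; the only differences are that you induct on the sum of the excesses $\max(0,m_{z_j}-\frac{k_j}{2})$ rather than their maximum, and you spell out the genericity needed at zeros of $P$ where $m_{z_j}=0$, which the paper leaves implicit.
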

\begin{proof}
    Given an operator, let $N=\displaystyle\max\left(\max_{j=1,\dots,n}\left(m_{z_j}-\frac{k_j}{2}\right), 0\right)$. We claim that if $N>0$ we can reduce it by applying a Darboux transformation. Indeed, just as in the proof of Lemma \ref{lemma:ms01} take a generic element $\psi\in \Ker(L)$ and apply the Darboux transformation. Then by formula \eqref{eq:exponents_DT} we have $\tilde{m}_s \in \{0,1\}$ for all $s$ such that $P(s)\neq 0$. For any $j$ such that $m_{z_j} \leq \frac{k_j}{2}$ we have $\tilde{m}_{z_j} = \frac{k_j}{2}-m_{z_j} \in \{0,\frac{1}{2},\dots, \frac{k_j}{2}\}$. For any $j$ such that $m_{z_j} > \frac{k_j}{2}$ we have $m_{z_j} > \frac{k_j+1}{2}$ by Lemma \ref{lemma:exp_obst}. In this case we have $\tilde{m}_{z_j} = m_{z_j} - 1 -\frac{k_j}{2} < m_{z_j}$, therefore the number $N$ for the Darboux transformed operator is smaller.
\end{proof}

In the case $P(x) = 1$ an analogue of Lemma \ref{lemma:ms01} is proven and used in  \cite{DG86}.

\section{Populations and  $\la$-monodromy free operators}\label{sec:Pop_and_DO}
In this section we show that any $\la$-monodromy free non-monic Schr\"odinger operator can be obtained from a point in a population of critical points of  an $\sLh$-master function. We apply this relation to classify $\la$-monodromy free operators for the case $P(x) = x^k$ for $k\in \Zp$.

\subsection{Reproductions and Darboux transformations}
Let $\bs T=(T_0,T_1)$ be a pair of polynomials and $P=T_0T_1$.

Let $\bs y$ be a pair of polynomials. Introduce Fuchsian Schr\"odinger operators
\begin{align}\label{eq:LfromYT}
L_j(\bs{y}, \bs{T}) &= P^{-1}(\pd+\ln'(\phi_j))(\pd - \ln'(\phi_j)),
\intertext{where}
\phi_0 &= \frac{y_0}{\sqrt{T_0}y_1}, \qquad \phi_1 = \frac{y_1}{\sqrt{T_1}y_0},
\end{align}
cf. \cite{GLVW22}, \cite{MMR25}.

Let $\bs y$ be in a population of critical points corresponding to $\bs T$, see Section \ref{subsec:populations}.
For $j=0,1$, we describe the change in operators $L_j(\bs{y},\bs{T})$ with respect to the reproduction (see Definition \ref{defi:reproduction}) in terms of Darboux transformations.

Note that for $j=0,1$,
\begin{equation}\label{eq:inv_do}
    {L}_j(\bs{y}^{[j]},\bs{T}) = {L}_j(\bs{y}, \bs{T}).
\end{equation}

If $\bs y=(y_0,y_1)$ is in a population of critical points corresponding to $\bs T=(T_0,T_1)$, then  $(y_1,y_0)$ is in a population of critical points corresponding to $(T_1,T_0)$. Now we have the following important lemma.

\begin{lemma}\label{lemma:Darboux_switch} We have
    \begin{align}
     L_{1}(\bs{y}, \bs{T})^{\phi_1} & =L_0(\bs{y},\bs{T})= L_{1}((y_1,y_0), (T_1,T_0)),\\
     \qquad L_0(\bs{y},\bs{T})^{\phi_0} & =L_{1}(\bs{y}, \bs{T})=L_{0}((y_1,y_0), (T_1,T_0)).
    \end{align}
    
\end{lemma}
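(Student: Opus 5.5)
The identities are direct computations from the definition \eqref{eq:def_Darboux}. For $L=L_j(\bs y,\bs T)=P^{-1}(\pd+\ln'\phi_j)(\pd-\ln'\phi_j)$ we have $\phi_j\in\Ker(L)$, so the Darboux transformation with respect to $\psi=\phi_j$ is
$$
L^{\phi_j}=P^{-1}\big(\pd-\ln'(\sqrt P\phi_j)\big)\big(\pd+\ln'(\sqrt P\phi_j)\big).
$$
The key step is to observe that $\sqrt P\,\phi_j$ is, up to a constant, the reciprocal of $\phi_{j+1}$. Indeed, $P=T_0T_1$ gives
$$
\sqrt P\,\phi_0=\sqrt{T_0T_1}\,\frac{y_0}{\sqrt{T_0}\,y_1}=\frac{\sqrt{T_1}\,y_0}{y_1}=\phi_1^{-1},\qquad
\sqrt P\,\phi_1=\frac{\sqrt{T_0}\,y_1}{y_0}=\phi_0^{-1}.
$$
Hence $\ln'(\sqrt P\phi_j)=-\ln'(\phi_{j+1})$, and substituting gives
$$
L_j^{\phi_j}=P^{-1}(\pd+\ln'\phi_{j+1})(\pd-\ln'\phi_{j+1})=L_{j+1}(\bs y,\bs T).
$$
This is exactly the first equality in each line of the lemma: $L_1^{\phi_1}=L_0$ and $L_0^{\phi_0}=L_1$.

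The remaining equalities follow from the symmetry of the definitions. Swapping $(y_0,y_1)\leftrightarrow(y_1,y_0)$ and $(T_0,T_1)\leftrightarrow(T_1,T_0)$ leaves $P$ unchanged and sends $\phi_0=\frac{y_0}{\sqrt{T_0}y_1}$ to $\frac{y_1}{\sqrt{T_1}y_0}=\phi_1$, and $\phi_1$ to $\phi_0$. Therefore
$$
L_1((y_1,y_0),(T_1,T_0))=L_0(\bs y,\bs T),\qquad L_0((y_1,y_0),(T_1,T_0))=L_1(\bs y,\bs T).
$$

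The only points needing care are the choice of branch of $\sqrt{T_i}$ and $\sqrt P$, and the requirement $\phi_j\in\Ker(L_j)\setminus\{0\}$ in the definition of the Darboux transformation. The first is harmless, since only logarithmic derivatives enter. The second holds because $(\pd-\ln'\phi_j)\phi_j=0$ and $\phi_j\neq0$ as $y_0,y_1,T_i$ are non-zero. No further obstacle arises; the whole proof is this short algebraic check.
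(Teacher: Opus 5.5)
Your proposal is correct and is exactly the direct computation the paper intends (the paper only says ``proved by a direct computation''). The key observation $\sqrt{P}\,\phi_0=\phi_1^{-1}$ and $\sqrt{P}\,\phi_1=\phi_0^{-1}$, together with the swap symmetry $\phi_0\leftrightarrow\phi_1$ under $(y_0,y_1,T_0,T_1)\mapsto(y_1,y_0,T_1,T_0)$, is precisely what that computation amounts to.
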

\begin{proof}
    The lemma is proved by a direct computation.
\end{proof}

Note that given $\bs{y}, \bs{T}$, the set of all $\bs{y}^{[j]}$ (together with $\bs{y}$) is in a bijective correspondence with the projectivization of $\Ker(L_j(\bs{y},\bs{T}))$.
\begin{cor}\label{cor:DT_pop}
    A Darboux transformation of an operator $L_{j}(\bs{y},\bs{T})$ corresponding to a point $\bs{y}$ in a population is an operator $L_{j}((y_1,y_0),(T_1,T_0))$ also corresponding  to a point $(y_1,y_0)$ in a (possibly different) population. \qed
\end{cor}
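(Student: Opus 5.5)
The corollary follows from Lemma \ref{lemma:Darboux_switch} together with the invariance \eqref{eq:inv_do} and the description of $\Ker(L_j(\bs y,\bs T))$ through reproductions. The plan is to show that every Darboux transformation of $L_j(\bs y,\bs T)$ is a transformation with respect to $\phi_j$ computed at some point $\bs y^{[j]}$ of the same population. Lemma \ref{lemma:Darboux_switch} then identifies the result with an operator attached to a swapped pair.

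\textbf{Step 1: the kernel of $L_j(\bs y,\bs T)$.} Take $j=0$; the case $j=1$ is symmetric after swapping the indices. By the factorization \eqref{eq:LfromYT}, $\phi_0=\frac{y_0}{\sqrt{T_0}y_1}$ lies in the kernel. Set $\tilde\phi_0=\frac{\tilde y_0}{\sqrt{T_0}y_1}$, where $\tilde y_0$ is any solution of $\Wr(y_0,\tilde y_0)=T_0y_1^2$. Then
\[
\Wr(\phi_0,\tilde\phi_0)=\frac{\Wr(y_0,\tilde y_0)}{T_0y_1^2}=1 .
\]
Hence $\tilde\phi_0/\phi_0$ has derivative $\phi_0^{-2}$, which is exactly the condition for the second solution of $(\pd+\ln'\phi_0)(\pd-\ln'\phi_0)\psi=0$. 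So $\Ker L_0(\bs y,\bs T)=\langle\phi_0,\tilde\phi_0\rangle$. Every nonzero element, up to scaling, has the form $\frac{y_0^{[0]}}{\sqrt{T_0}y_1}$, where $y_0^{[0]}$ ranges over $\tilde y_0+cy_0$ or equals $y_0$ itself. This is the stated bijection between the projectivized kernel and $\{\bs y^{[0]}\}\cup\{\bs y\}$.

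\textbf{Step 2: reduce to the $\phi$-transformation.} Let $\psi\in\Ker L_0(\bs y,\bs T)\setminus\{0\}$. By Step 1, $\psi$ is a constant multiple of $\phi_0(\bs y')$, where $\bs y'$ is either $\bs y$ or $\bs y^{[0]}$. Scaling $\psi$ does not change $\ln'\psi$, so it does not change $L^\psi$. Since $\bs y'$ lies in the same population as $\bs y$, it is also fertile, and \eqref{eq:inv_do} gives $L_0(\bs y,\bs T)=L_0(\bs y',\bs T)$. Therefore
\[
L_0(\bs y,\bs T)^{\psi}=L_0(\bs y',\bs T)^{\phi_0(\bs y')} .
\]

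\textbf{Step 3: identify the result.} By Lemma \ref{lemma:Darboux_switch},
\[
L_0(\bs y',\bs T)^{\phi_0(\bs y')}=L_0((y_1',y_0'),(T_1,T_0)) .
\]
The swapped pair $(y_1',y_0')$ lies in a population of critical points for $(T_1,T_0)$, by the remark preceding Lemma \ref{lemma:Darboux_switch}. Thus the Darboux transformation is again an operator of the form $L_j$ attached to a point of a (possibly different) population, with the roles of $T_0$ and $T_1$ exchanged.

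\textbf{Main obstacle.} The delicate point is Step 1: the claim that the whole kernel is realized by reproductions. This needs the normalization $\Wr(\phi_0,\tilde\phi_0)=1$ to match the factorized operator, which I would check directly by expanding $(\pd+\ln'\phi_0)(\pd-\ln'\phi_0)$. The rest of the argument is bookkeeping with \eqref{eq:inv_do} and Lemma \ref{lemma:Darboux_switch}.
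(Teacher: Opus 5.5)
Your proposal is correct and follows essentially the paper's argument. The corollary comes from three ingredients: Lemma \ref{lemma:Darboux_switch}, the invariance \eqref{eq:inv_do}, and the bijection between the projectivized kernel of $L_j(\bs y,\bs T)$ and the reproductions $\bs y^{[j]}$; the paper only notes this last bijection, while you verify it through $\Wr(\phi_0,\tilde\phi_0)=1$.
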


\begin{cor}\label{cor:Darboux_rep} We have
    \begin{align}
        L_1(\bs{y}^{[0]},\bs{T}) = L_0(\bs{y},\bs{T})^{\phi_0^{[0]}} = \Big(L_1(\bs{y},\bs{T})^{\phi_1}\Big)^{\phi_0^{[0]}}, \qquad \phi_0^{[0]} = \frac{y_0^{[0]}}{\sqrt{T_0}y_1},\\
        L_0(\bs{y}^{[1]},\bs{T}) = L_1(\bs{y},\bs{T})^{\phi_1^{[1]}} = \Big(L_0(\bs{y},\bs{T})^{\phi_0}\Big)^{\phi_1^{[1]}},\qquad \phi_1^{[1]} = \frac{y_1^{[1]}}{\sqrt{T_1}y_0}.
    \end{align}\qed
\end{cor}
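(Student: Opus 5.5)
The two identities in Corollary \ref{cor:Darboux_rep} are proved the same way, so I treat the first; the second follows by the symmetry $(\bs y,\bs T)\mapsto((y_1,y_0),(T_1,T_0))$, which exchanges the indices $0$ and $1$ as recorded in Lemma \ref{lemma:Darboux_switch}. The plan is to combine three facts: the invariance \eqref{eq:inv_do} of $L_j$ under reproduction in direction $j$, the switching Lemma \ref{lemma:Darboux_switch}, and the observation that $L_j(\bs y,\bs T)$ depends only on $\phi_j$, up to scaling, through its factorized form \eqref{eq:LfromYT}.

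\textbf{Step 1.} Observe that $\phi_0^{[0]}=\frac{y_0^{[0]}}{\sqrt{T_0}y_1}$ is exactly the function $\phi_0$ built from the pair $\bs y^{[0]}=(y_0^{[0]},y_1)$. Since $\bs y$ lies in a population, $\bs y^{[0]}$ lies in the same population, so every earlier statement applies to it as well.

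\textbf{Step 2.} Apply Lemma \ref{lemma:Darboux_switch} to the pair $\bs y^{[0]}$:
\[
L_0(\bs y^{[0]},\bs T)^{\phi_0^{[0]}}=L_1(\bs y^{[0]},\bs T).
\]
By \eqref{eq:inv_do} with $j=0$, $L_0(\bs y^{[0]},\bs T)=L_0(\bs y,\bs T)$. Substituting gives
\[
L_1(\bs y^{[0]},\bs T)=L_0(\bs y,\bs T)^{\phi_0^{[0]}}.
\]
For this to be a legitimate Darboux transformation of $L_0(\bs y,\bs T)$, one needs $\phi_0^{[0]}\in\Ker L_0(\bs y,\bs T)$. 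This holds because $\phi_0^{[0]}$ is annihilated by the right factor of $L_0(\bs y^{[0]},\bs T)$, and that operator equals $L_0(\bs y,\bs T)$. One can also check it directly from $\Wr(y_0,y_0^{[0]})=T_0y_1^2$: this gives $\Wr(\phi_0,\phi_0^{[0]})=\mathrm{const}$, so $\phi_0^{[0]}$ is the second solution.

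\textbf{Step 3.} Replace $L_0(\bs y,\bs T)$ by $L_1(\bs y,\bs T)^{\phi_1}$, using the first line of Lemma \ref{lemma:Darboux_switch}. This yields the last equality in the corollary.

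The only delicate point is notational. The Darboux transform $L^{\psi}$ depends on $\psi$ only up to a constant multiple, since only $\ln'(\psi)$ enters \eqref{eq:def_Darboux}. Hence the scalar normalization of $y_0^{[0]}$ does not matter, and the identity of operators holds exactly. Beyond this, everything is formal, so I expect no real obstacle: the content is already in Lemma \ref{lemma:Darboux_switch} and \eqref{eq:inv_do}.
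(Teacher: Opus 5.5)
Your proof is correct and is the argument the paper leaves implicit: the corollary is marked \qed directly after Lemma \ref{lemma:Darboux_switch} and \eqref{eq:inv_do}. You apply the switching lemma to $\bs y^{[0]}$, use \eqref{eq:inv_do} to replace $L_0(\bs y^{[0]},\bs T)$ by $L_0(\bs y,\bs T)$, then substitute the first line of the lemma, and the second identity follows by swapping the indices $0$ and $1$.
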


\begin{prop}\label{prop:la_mf_fromcp}\cite{GLVW22} Let $\bs y$ be in a population of critical points corresponding to $\bs T$. Then
    the operators $L_j(\bs{y}, \bs{T})$ are $\la$-monodromy free. 
\end{prop}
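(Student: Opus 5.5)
Since $L_1(\bs y,\bs T)=L_0((y_1,y_0),(T_1,T_0))$ by Lemma \ref{lemma:Darboux_switch}, and swapping coordinates maps populations to populations, it suffices to treat $L_0=L_0(\bs y,\bs T)$. The plan is a density/continuity argument: first prove the claim when $\bs y$ is a generic critical point, then pass to all points of the population.

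\textbf{Step 1: local analysis at a critical point.} Assume $\bs y$ is generic. The kernel of $PL_0$ contains $\phi_0=y_0/(\sqrt{T_0}y_1)$ and also $\tilde\phi_0=\tilde y_0/(\sqrt{T_0}y_1)$, since
$$\Wr(\phi_0,\tilde\phi_0)=\Wr(y_0,\tilde y_0)/(T_0y_1^2)=1$$
by \eqref{eq:cp_wr}. Hence $\Ker(\partial^2-U)$ is spanned by functions of the form $\prod(x-s)^{\mu_s}$ with $\mu_s\in\frac12\Z$. This gives monodromy $\pm\mathbb 1$ for $\la=0$ and identifies the modified exponents at each point:
\begin{itemize}
\item at zeros $s$ of $y_1$ with $P(s)\ne0$, $m_s=1$;
\item at zeros of $y_0$ there is no pole;
\item at $z_r$, $m_{z_r}=|l_r-\tfrac{1}{2}\cdot 2l_r\cdots|$, i.e.\ a value in $\frac12\Z_{\ge0}$ determined by $T_0$.
\end{itemize}
For $\la\ne0$ we must check that $\Delta(s,\la)\equiv0$ at every pole. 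At a simple zero $t_j$ of $y_1$ with $m=1$, the determinant in Proposition \ref{prop:mf_det} is $2\times2$. By the remark after Proposition \ref{prop:mf_residue}, the condition for all $\la$ is exactly the residue formula $a_{-1}(t_j)=P'(t_j)/P(t_j)$. I expect this to reduce, after expanding $U=\phi_0''/\phi_0+(\ln\ldots)$ at $t_j$, to the second Bethe ansatz equation \eqref{eq:Bethe_ansatz} for $t_j$. The point is that the residue of $U$ involves $\ln'(y_0^2/(T_0^{1/2}\cdots))$ evaluated at $t_j$, and the combination $P'/P=T_0'/T_0+T_1'/T_1$ brings in the $T_1$ term that the BAE supplies.

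\textbf{Step 2: the points $z_r$.} At $z_r$ the potential is $a_{-2}/(x-z_r)^2+\dots$, with the coefficient $\la P$ vanishing to order $k_r=2l_r+2m_r$. The determinant \eqref{eq:no_monodromy_det} has size $2m_{z_r}+1\le k_r+1$, and all entries $a_j(z_r,\la)$ with $j\le 2m_{z_r}-1$ carry no $\la$, because $P$ vanishes to order $k_r$. So $\Delta(z_r,\la)=\Delta(z_r,0)$, which vanishes because the $\la=0$ operator has no logarithms. The bound $2m_{z_r}-1<k_r$ comes from $m_{z_r}=|l_r-\tfrac14\cdot\ldots|$, which I will compute precisely from $\phi_0\sim(x-z_r)^{-l_r}$, giving $m_{z_r}\le l_r$ or $l_r-1$.

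\textbf{Step 3: extension to the whole population.} Critical points are dense in each population \cite{MV04}. The conditions $\Delta(s,\la)=0$ are algebraic and closed in the coefficients of $\bs y$ (for a fixed degree stratum), so they pass to limits. There is a subtlety: poles can collide under the limit. To avoid it, I would rather use Lemma \ref{lemma:Darboux_MF} and Corollary \ref{cor:Darboux_rep}. Every point is reached from a critical point by reproductions, and each reproduction is realised by two Darboux transformations with kernel elements. Lemma \ref{lemma:Darboux_MF} preserves $\la$-monodromy freeness, so the result follows by induction along the reproduction chain, starting from any generic critical point of the population. This avoids limits entirely.

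\textbf{Main obstacle.} I expect the main obstacle to be the residue computation in Step 1, identifying it with the Bethe ansatz equations including the $T_1$ contribution. A second obstacle is the exact exponent bookkeeping at the $z_r$ in Step 2.
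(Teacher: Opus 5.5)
Your route differs from the paper's. The paper takes the critical-point case from \cite{GLVW22} and extends it to the whole population by density of critical points and continuity. You instead verify the critical-point case by hand and replace the continuity step by induction along reproductions.

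Your Step 3 is correct and is cleaner than a limit argument. By Corollary \ref{cor:Darboux_rep}, each new operator is a Darboux transform of an old one by a kernel element such as $\phi_0^{[0]}$, so Lemma \ref{lemma:Darboux_MF} propagates the property. Moreover, since $L_1(\bs y,\bs T)=L_0(\bs y,\bs T)^{\phi_0}$ (Lemma \ref{lemma:Darboux_switch}), you only need $L_0$ at one critical point of the population.

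Step 1 is also right. At a simple zero $t_j$ of $y_1$ with $P(t_j)\neq0$ one gets $a_{-1}(t_j)=-2\ln'\bigl(y_0/(\sqrt{T_0}\prod_{j'\neq j}(x-t_{j'}))\bigr)(t_j)$. The condition $a_{-1}(t_j)=P'(t_j)/P(t_j)$ is then exactly the second equation in \eqref{eq:Bethe_ansatz}, with the $T_0$-terms cancelling. One correction: for $m_s=1$ the determinant is $3\times3$, not $2\times 2$. Its $\la$-coefficient is $\pm4\bigl(P'(s)-a_{-1}(s)P(s)\bigr)$, which is where the remark after Proposition \ref{prop:mf_residue} comes from.

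The gap is in Step 2. When $y_1(z_r)\neq0$ you get $\phi_0\sim(x-z_r)^{-l_r}$, hence $m_{z_r}=l_r$ and $2m_{z_r}\le 2l_r+2m_r=k_r$. In that case your argument $\Delta(z_r,\la)=\Delta(z_r,0)=0$ works.

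However, genericity only requires $y_1$ to be coprime with $T_1$, so $y_1$ may vanish at a zero $z_r$ of $T_0$ with $m_r=0$. Then $\phi_0\sim(x-z_r)^{-l_r-1}$, $m_{z_r}=l_r+1$ and $k_r=2l_r$. Now $\la$ does enter $\Delta(z_r,\la)$, through $a_{2l_r}$ and $a_{2l_r+1}$, and your argument says nothing. For instance, $\bs T=(x^{2l},1)$, $\bs y=(1,x)$ is a critical point (take $\tilde y_0=x^{2l+3}/(2l+3)$, $\tilde y_1=-1$) with $L_0=x^{-2l}\bigl(\pd^2-(l+1)(l+2)x^{-2}\bigr)$. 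This operator is $\la$-monodromy free, but not for the reason you give.

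The easiest repair uses your own Step 3: start the induction from a critical point with $y_1(z_r)\neq0$ at all zeros of $T_0$. Such a point always exists in the population.
\begin{itemize}
\item If $y_1(z_r)=0$, then $T_1(z_r)\neq0$ and $y_0(z_r)\neq0$, so $\Wr(y_1,\tilde y_1)=T_1y_0^2$ forces $\tilde y_1(z_r)\neq0$.
\item Hence $(y_0,\tilde y_1+cy_1)$ is, for generic $c$, a critical point whose second component is nonvanishing at all zeros of $T_0$.
\item Since reproductions are reversible, this point generates the same population.
\end{itemize}
Alternatively, compute the $\la$-linear term of $\Delta(z_r,\la)$ directly and match it with the Bethe equation for $t_j=z_r$.
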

\begin{proof}
    For the case when $\bs{y}$ represents a critical point in the population, the proof is given in \cite{GLVW22}. For an arbitrary pair $\bs y$ in the population the proposition follows by continuity as the pairs corresponding to critical points are dense in the population.
\end{proof}

We note that by Proposition \ref{prop:la_mf_fromcp} for each $P$ we immediately obtain a large set of $\la$-monodromy free operators. Indeed, factorize $P=T_0T_1$. 
Then there exists a population of critical points generated by $(1,1)$ with respect to $(T_0,T_1)$. The points of such population are given by explicit determinants, see \eqref{eq:theta_Wr}. 
Then all corresponding differential operators are $\la$-monodromy free.

We remark that due to Proposition \ref{prop:population_from_sf}, the differential operators corresponding to super-fertile pairs of polynomials are the same as the operators corresponding to pairs in the populations of critical points.
\begin{cor}
    For any super-fertile tuple $\bs{y}^{\circ}$ corresponding to $\bs{T}^{\circ} = (T_0^{\ci}, T_1^{\ci})$ there exists a pair $\bs{y}$ in a population of critical points corresponding to a pair $\bs{T}$ such that $L_1(\bs{y}^{\ci},\bs{T}^{\ci}) = L_1(\bs{y},\bs{T})$. In particular the operator $L_1(\bs{y}^{\ci},\bs{T}^{\ci})$ is $\la$-monodromy free.
\end{cor}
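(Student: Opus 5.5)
The plan is to reduce the statement directly to Proposition \ref{prop:population_from_sf} together with the invariance of the operators $L_j$ under the simultaneous rescaling of Lemma \ref{lemma:sf_from_population}.

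\emph{Producing $\bs T$ and $\bs y$.} Apply Proposition \ref{prop:population_from_sf} to the super-fertile pair $\bs y^\ci$. It gives unique monic polynomials $f_0,f_1$ and a pair $\bs y=(y_0,y_1)$ with $y_0^\ci=f_0y_0$ and $y_1^\ci=f_1y_1$. This $\bs y$ lies in a population of critical points corresponding to
$$\bs T=\Big(\frac{T_0^\ci f_1^2}{f_0^2},\frac{T_1^\ci f_0^2}{f_1^2}\Big).$$

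\emph{Comparing the operators.} We have $P=T_0T_1=T_0^\ci T_1^\ci$, so the prefactor $P^{-1}$ in \eqref{eq:LfromYT} is the same for both pairs. It then suffices to compare the functions $\phi_1$. Up to a constant,
$$\phi_1(\bs y^\ci,\bs T^\ci)=\frac{f_1y_1}{\sqrt{T_1^\ci}\,f_0y_0} =\frac{f_1y_1}{\sqrt{T_1}\,(f_0/f_1)\,f_0y_0} =\frac{y_1}{\sqrt{T_1}\,y_0}\cdot\frac{f_1^2}{f_0^2}.$$
Here I used $\sqrt{T_1^\ci}=\sqrt{T_1}\,f_1/f_0$, which follows from $T_1=T_1^\ci f_0^2/f_1^2$.

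\emph{Expected obstacle.} This computation shows a mismatch. Substituting the rescaled pair into $\phi_1$ leaves the factor $f_1^2/f_0^2$, and so the naive rescaling does not appear to cancel. The likely resolution is that the direction of $\bs T^\ci$ in Lemma \ref{lemma:sf_from_population} is chosen so that the factors cancel. Recheck with $T^\ci_1=T_1f_1^2/f_0^2$, the convention of Lemma \ref{lemma:sf_from_population}. Then $\sqrt{T_1^\ci}=\sqrt{T_1}f_1/f_0$, and
$$\frac{f_1y_1}{\sqrt{T_1}(f_1/f_0)f_0y_0}=\frac{y_1}{\sqrt{T_1}y_0},$$
which is exactly the desired equality. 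So the key step is to fix this convention consistently. Concretely, I will use the $\bs T$ from Proposition \ref{prop:population_from_sf}, read so that $\bs T^\ci$ is related to $\bs T$ as in Lemma \ref{lemma:sf_from_population}. Once that is done, $\phi_1(\bs y^\ci,\bs T^\ci)$ and $\phi_1(\bs y,\bs T)$ agree up to a multiplicative constant, so their logarithmic derivatives agree. Hence
$$L_1(\bs y^\ci,\bs T^\ci)=L_1(\bs y,\bs T).$$

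\emph{Conclusion.} Since $\bs y$ lies in a population, Proposition \ref{prop:la_mf_fromcp} shows that $L_1(\bs y,\bs T)$ is $\la$-monodromy free. Therefore $L_1(\bs y^\ci,\bs T^\ci)$ is $\la$-monodromy free as well. The same argument applies verbatim to $L_0$, using $\phi_0$.
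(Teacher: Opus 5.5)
Your argument is correct and is the same as the paper's: apply Proposition~\ref{prop:population_from_sf}, observe that $T_0T_1=T_0^{\ci}T_1^{\ci}$ and that $\phi_1(\bs y^{\ci},\bs T^{\ci})=\phi_1(\bs y,\bs T)$ up to a constant, and conclude with Proposition~\ref{prop:la_mf_fromcp}. The ``expected obstacle'' is spurious, and no convention needs to be ``fixed'': the $\bs T$ produced by Proposition~\ref{prop:population_from_sf} already satisfies $T_1^{\ci}=T_1f_1^2/f_0^2$, exactly as in Lemma~\ref{lemma:sf_from_population}, and your first display simply wrote $f_0/f_1$ where your own identity $\sqrt{T_1^{\ci}}=\sqrt{T_1}\,f_1/f_0$ requires $f_1/f_0$.
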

\begin{proof}
    By Proposition \ref{prop:population_from_sf} the operator has the form $L_{1}(\bs{y}, \bs{T})$, where $\bs{y}$ is a point in a population corresponding to $\bs{T}$. Moreover, $P^\ci=P = T_0T_1$. Then the statement follows from Proposition \ref{prop:la_mf_fromcp}.
\end{proof}

\subsection{From $\la$-monodromy free operators to populations}\label{subsec:from_lmf_to_pop}

In this section we show the  converse to Proposition \ref{prop:la_mf_fromcp}. Namely, we show that any $\la$-monodromy free operator has the form $L_1(\bs{y},\bs{T})$, where a pair $\bs{y}$ is in a population of critical points corresponding to a pair $\bs T$ such that $T_0 T_1 = P$. We start from the case of the operator corresponding to a critical point. 

Recall notation \eqref{eq:roots_P}.
\begin{lemma}\label{lemma:op_to_BA}
    Let $L$ be a $\la$-monodromy free operator. Assume that the modified exponents satisfy $m_s \in  \{0,1\}$ for all $s\in \C$ such that $P(s)\neq 0$ and $m_{z_j} \in \{0,\frac{1}{2},\dots, \frac{k_j}{2}\},\; j = 1,\dots,n$. Then there exists a critical point $\bs{y}$ of $\sLh$-master function corresponding to $T_0(x) = \prod_{j=1}^n (x-z_j)^{k_j-2m_{z_j}},\; T_1(x) = \prod_{j=1}^{n}(x-z_j)^{2m_{z_j}}$,
    such that 
    $
    L = L_1(\bs{y},\bs{T}),
    $
    as in \eqref{eq:LfromYT}.
\end{lemma}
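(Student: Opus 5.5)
The natural approach is to read $\bs y$ off a generic kernel element of $L$, then use the $\la$-monodromy free condition to produce the second kernel element needed for fertility.

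\textbf{Choice of $y_1,y_0$.} By Lemma \ref{lemma:rat_ker}, a generic $\psi\in\Ker(L)$ has the form $\psi=c\prod_s(x-s)^{-m_s}$. Let $S$ be the set of poles $s$ with $P(s)\neq0$; there $m_s=1$. Put
$$y_0=\prod_{s\in S}(x-s),\qquad y_1=1,\qquad \sqrt{T_1}=\prod_j (x-z_j)^{m_{z_j}}.$$
Then $\psi=1/(\sqrt{T_1}\,y_0)$, which is $\phi_1$ for $\bs y=(y_0,1)$. Since $L=P^{-1}(\pd+\ln'\psi)(\pd-\ln'\psi)$ holds for any nonzero kernel element, we get $L=L_1(\bs y,\bs T)$ immediately. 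Genericity is also immediate: $y_0$ is square free, it is coprime to $T_0$ (its roots avoid the zeros of $P$), and $y_1=1$. The choice $y_1=1$ matches the modified exponents: a generic kernel element carries only the exponents $-m_s$, so every pole of $\psi$ is accounted for by $y_0$ and $T_1$.

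\textbf{Fertility.} By Proposition \ref{prop:cp_wr}, it remains to find polynomials $\tilde y_0,\tilde y_1$ with
$$\Wr(y_0,\tilde y_0)=T_0,\qquad \Wr(1,\tilde y_1)=y_0^2T_1.$$
The second is trivial: take $\tilde y_1=\int y_0^2T_1$. For the first, use the kernel of $L_0(\bs y,\bs T)=L^{\phi_1}$, the Darboux transform, which is $\la$-monodromy free by Lemma \ref{lemma:Darboux_MF}. Its kernel contains $\phi_0=y_0/\sqrt{T_0}$, and the second solution is $\phi_0\int 1/\phi_0^2=\frac{y_0}{\sqrt{T_0}}\int \frac{T_0}{y_0^2}$. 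It therefore suffices to show that $\int T_0/y_0^2$ is rational, i.e.\ that $T_0/y_0^2$ has zero residue at every root $s$ of $y_0$. Then $\int T_0/y_0^2=\tilde y_0/y_0$ with $\tilde y_0$ a polynomial, because $y_0$ has simple roots, and $\Wr(y_0,\tilde y_0)=T_0$.

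\textbf{Main obstacle: the residue condition.} The vanishing of $\operatorname{Res}_{s}T_0/y_0^2$ amounts to the Bethe equation at $s$,
$$\sum_{s'\in S,\,s'\ne s}\frac{1}{s-s'}=\frac{T_0'(s)}{2T_0(s)}.$$
I will derive this from Proposition \ref{prop:mf_residue}. Expand $U=\psi''/\psi$ with $\ln'\psi=-\sum_{S}\frac{1}{x-s}-\frac12\ln' T_1$. Since $m_s=1$, the residue of $U$ at $s$ equals $-2$ times the regular part of $\ln'\psi$ at $s$, namely $2\sum_{s'\ne s}\frac{1}{s-s'}+\frac{T_1'(s)}{T_1(s)}$. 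Proposition \ref{prop:mf_residue} sets this equal to $P'(s)/P(s)=T_0'/T_0+T_1'/T_1$, which gives exactly the Bethe equation above. I need to double-check the signs and factors in this expansion.

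Alternatively, the same conclusion follows from the triviality of the monodromy of $L_0$, since a logarithm in $\int T_0/y_0^2$ would produce monodromy. That would require knowing $L_0$ has trivial rather than $-\mathbb 1$ monodromy at $s$, which Proposition \ref{prop:mf_residue} also gives because $P(s)\neq0$. I also need to check that $\tilde y_0$ has no poles at the $z_j$. This holds because $T_0/y_0^2$ is regular there, since $k_j-2m_{z_j}\ge0$.
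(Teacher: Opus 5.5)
\textbf{The gap is the choice $y_1=1$.} Lemma \ref{lemma:rat_ker} gives $\mu_s=-m_s$ for a generic $\psi\in\Ker(L)$ only at the finitely many points with $m_s\neq 0$. At regular points the exponents are $0$ and $1$, and a generic kernel element in general has simple zeros there. So $\Ker(L)$ need not contain $1/(\sqrt{T_1}\,y_0)$ at all. When it does not, $L\neq L_1((y_0,1),\bs T)$. Your residue computation also fails, since it uses $U=\psi''/\psi$ for this particular $\psi$.

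Here is a concrete counterexample. Take $P=1$, so $T_0=T_1=1$, and
\[
L=\partial^2-\frac{6x(x^3-2c)}{(x^3+c)^2},\qquad c\neq 0 .
\]
This is $L_1(\bs y,\bs T)$ for the critical point $\bs y=(x^3+c,\,x)$ in the population of $(1,1)$. Hence it is $\la$-monodromy free and satisfies the hypotheses, with $m_s=1$ at the three roots of $x^3+c$. Its kernel is spanned by $\frac{x}{x^3+c}$ and $\frac{x^6/5+cx^3-c^2}{x^3+c}$, and it does not contain $\frac{1}{x^3+c}$. Accordingly, the Bethe equation your argument produces, $\sum_{s'\neq s}\frac{1}{s-s'}=0$, is false: its left-hand side equals $1/s$. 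In other words, $(x^3+c,1)$ is not a critical point for $\bs T=(1,1)$.

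\textbf{How to repair it.} Take $y_1:=\sqrt{T_1}\,y_0\,\psi$ for a generic $\psi\in\Ker(L)$. This is a polynomial with simple zeros $t_1,\dots,t_{d_1}$ that avoid the $z_j$ and the $s_i$, which gives genericity. Fertility in direction $1$ is then no longer trivial. It comes from a second kernel element $\tilde\psi=\tilde y_1/(\sqrt{T_1}\,y_0)$. Since $L$ has no first-order term, $\Wr(\psi,\tilde\psi)$ is a nonzero constant, so $\Wr(y_1,\tilde y_1)=\mathrm{const}\cdot T_1y_0^2$.

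Now redo your residue computation with $\ln'\psi$ containing the extra term $\sum_l\frac{1}{x-t_l}$. Proposition \ref{prop:mf_residue} then gives
\[
2\sum_{s'\neq s}\frac{1}{s-s'}-2\sum_{l=1}^{d_1}\frac{1}{s-t_l}=\frac{T_0'(s)}{T_0(s)} .
\]
This is exactly the vanishing of the residue of $T_0y_1^2/y_0^2$ at $s$. Hence $\int T_0y_1^2/y_0^2=\tilde y_0/y_0$ with $\tilde y_0$ a polynomial, and $\Wr(y_0,\tilde y_0)=T_0y_1^2$.

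With this correction your argument essentially coincides with the paper's. The paper obtains the same Bethe equation by conjugating $PL$ by $\sqrt{T_1}\,y_0$, applying the result to $y_1$, and taking residues at the $s_i$.
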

\begin{proof}
     Let $s_1,\dots,s_{d_0}$ be the  poles of $L$ that are not zeroes of $P$. Since the operator $L$ is $\la$-monodromy free, it has the form $L = P^{-1}(\pd^2 - U)$ where 
     $$
     U = \sum_{j=1}^n \left(\frac{m_{z_j}(m_{z_j}+1)}{(x-z_j)^2}+ \frac{\al_j}{x-z_j}\right) + \sum_{i=1}^{d_0}\frac{2}{x-s_i}\left(\frac{1}{x-s_i} + \sum_{j=1}^{n}\frac{k_j}{2(s_i-z_j)}\right),
     $$
    where $\al_j\in\C$, $j=1,\dots,n$, see Proposition \ref{prop:mf_residue}.

     Set $\displaystyle y_0=\prod_{i=1}^{d_0}(x-s_i)$. Then by Lemma $\ref{lemma:rat_ker}$ we obtain $\Ker(L) = \Ker(PL)= \left<\frac{y_1}{\sqrt{T_1}y_0},\frac{\tilde{y}_1}{\sqrt{T_1}y_0}\right>$, where $y_1,\tilde{y}_1$ are polynomials without common zeros and without zeroes in $\{z_1,\dots, z_n,s_1,\dots, s_{d_0}\}$. We claim that the pair $\bs{y} = (y_0,y_1)$ is a critical point corresponding to $\bs{T} = (T_0,T_1)$. 
     
    Since $L$ has no term of the form $q(x)\pd$ we have
    \begin{equation}\label{eq:wr11tmp}
        \Wr(y_1,\tilde{y}_1) = T_1y_0^2.
    \end{equation}

    Conjugation of $PL$ by $\sqrt{T_1}y_0$ is an operator with kernel $\langle y_1, \tilde{y}_1\rangle$ given by 
    \begin{multline}
        \pd^2 - 2\left(\mathop{\sum}\limits_{i=1}^{d_0}\frac{1}{x-s_i}+ \sum_{j=1}^{n}\frac{m_{z_j}}{x-z_j}\right)\pd+\sum_{1\leq i<i^{\prime}\leq d_0}\frac{2}{(x-s_i)(x-s_{i^{\prime}})} {+} \sum_{1\leq j<j^{\prime}\leq n}\frac{2m_{z_j}m_{z_{j^{\prime}}}}{(x-z_j)(x-z_{j{\prime}})} {+}\\{+} 2
        \left(\sum_{j=1}^{n}\frac{m_{z_j}}{x-z_j}\right)\left(\mathop\sum\limits_{i=1}^{d_0}\frac{1}{x-s_i}\right) - \sum_{i=1}^{d_{0}}\left(\frac{1}{x-s_i}\sum_{j=1}^{n}\frac{k_j}{s_i-z_j}\right) -\sum_{j=1}^{n}\frac{\al_{j}}{x-z_j}.
    \end{multline}

    Apply this operator to $y_1$ and take the residue at $x= s_i$. Dividing the result by $y_1(s_i)$, we obtain
    \begin{equation}\label{eq:BA2tmp}
    \sum_{i^{\prime}:i^{\prime}\neq i}\frac{2}{s_i-s_{i^{\prime}}}-\sum_{l=1}^{d_1}\frac{2}{s_i-t_l} - \sum_{j=1}^n\frac{k_j-2m_{z_j}}{s_i-z_j} = 0,\;\; i=1,\dots,n,
    \end{equation}
    where $t_1,\dots, t_{d_1}$ are zeroes of $y_1(x)$.
    
    The system \eqref{eq:wr11tmp}, \eqref{eq:BA2tmp} is equivalent to the system of equations on a critical point, cf. Proposition \ref{prop:cp_wr}. 
\end{proof}

For given $\bs{y}$ in a population $\mathcal{P}$ let us denote by $[\bs{y}^{[i]}]$ the closure of the set of all possible pairs obtained by the reproduction of $\bs{y}$ in the $i$-th direction. The variety $[\bs{y}^{[i]}]$ is isomorphic to $\mathbb{CP}^1$.

\begin{theorem}\label{thm:DO_to_populations}
    Let $L$ be a $\la$-monodromy free operator. Then there exists a unique line $[\bs{y}^{[1]}]$ in a unique population corresponding to a pair of polynomials $\bs T$ such that $L = L_{1}(\bs{y},\bs{T})$, and $P = T_0T_1$.
\end{theorem}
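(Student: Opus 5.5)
The plan is to reduce $L$, by Darboux transformations, to an operator covered by Lemma~\ref{lemma:op_to_BA}, and then transport the resulting critical point back along the chain using Corollary~\ref{cor:Darboux_rep} and Lemma~\ref{lemma:Darboux_switch}.

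\emph{Existence.} First apply Lemma~\ref{lemma:ms01} and then Lemma~\ref{lemma:m0}. This gives a finite chain
\[
L=L^{(0)},\quad L^{(1)}=(L^{(0)})^{\psi_1},\quad \dots,\quad L^{(N)}=(L^{(N-1)})^{\psi_N},
\]
with $\psi_r$ generic in $\Ker(L^{(r-1)})$. By Lemma~\ref{lemma:Darboux_MF} every $L^{(r)}$ is $\la$-monodromy free. The last operator $L^{(N)}$ satisfies the hypotheses of Lemma~\ref{lemma:op_to_BA}. Hence $L^{(N)}=L_1(\bs y,\bs T)$ for a critical point $\bs y$ with respect to some $\bs T$ with $T_0T_1=P$.

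Next we invert one Darboux step at a time. The key observation is that $L^{(r)}=(L^{(r-1)})^{\psi_r}$ can be undone: if $L^{(r-1)}=\frac1{\sqrt P}(\pd+\ln'(\sqrt P\psi_r))\frac1{\sqrt P}(\pd-\ln'\psi_r)$, then $L^{(r-1)}=(L^{(r)})^{\chi}$ with $\chi=1/(\sqrt P\psi_r)\in\Ker(L^{(r)})$. This is checked directly from \eqref{eq:def_Darboux}. Suppose inductively that $L^{(r)}=L_1(\bs y',\bs T')$ with $\bs y'$ in a population. Then $\chi$ lies in $\Ker L_1(\bs y',\bs T')$, and this kernel is spanned by $\phi_1^{[1]}=y_1^{[1]}/(\sqrt{T_1'}y_0')$ over all reproductions in direction $1$. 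So $\chi=\phi_1$ for some pair $\bs y''\in[\bs y'^{[1]}]$. By Lemma~\ref{lemma:Darboux_switch},
\[
L^{(r-1)}=L_1(\bs y'',\bs T')^{\phi_1}=L_1\bigl((y''_1,y''_0),(T_1',T_0')\bigr).
\]
The swapped pair again lies in a population, for the swapped $\bs T$. After $N$ steps we obtain $L=L_1(\bs y,\bs T)$ with $T_0T_1=P$.

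\emph{Uniqueness.} Suppose $L_1(\bs y,\bs T)=L_1(\bar{\bs y},\bar{\bs T})$. Then the kernels coincide, so the functions $y_1/(\sqrt{T_1}y_0)$ and $\bar y_1'/(\sqrt{\bar T_1}\bar y_0)$ agree up to a constant, for suitable points on the two lines. Comparing exponents at the $z_j$ via Lemma~\ref{lemma:rat_ker} determines the generic exponent $-m_{z_j}$. This should force $T_1=\bar T_1$, hence $T_0=P/T_1=\bar T_0$. This step requires care: at $z_j$ the exponent comes from $\mathrm{ord}(y_1)-\mathrm{ord}(y_0)-\tfrac12\mathrm{ord}(T_1)$. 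I will choose generic points on the lines, or use Proposition~\ref{prop:population_from_sf}, so that $y_0$ and $y_1$ do not vanish at the $z_j$ generically, which makes the comparison clean.

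With $\bs T$ fixed, the ratio $y_1/y_0$ is determined up to the choice within the line $[\bs y^{[1]}]$. Corollary~\ref{cor:} then identifies the pairs: $\bar y_0/\bar y_1=y_0/y_1^{[1]}$ implies $\bar{\bs y}=\bs y^{[1]}$, so the lines coincide. Uniqueness of the population follows because a line determines its population.

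I expect the main obstacle to be the inversion step. One must verify that the inverse Darboux function $\chi=1/(\sqrt P\psi_r)$ really lies in the kernel of $L_1(\bs y',\bs T')$ and is realized by a polynomial reproduction. This holds because the kernel is two-dimensional and is exhausted by the $\phi_1^{[1]}$, but the identification should be checked against Lemma~\ref{lemma:rat_ker}. A secondary obstacle is pinning down $\bs T$ in the uniqueness argument.
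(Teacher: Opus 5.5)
Your existence argument is correct and follows the paper: reduce via Lemmas \ref{lemma:ms01} and \ref{lemma:m0} to Lemma \ref{lemma:op_to_BA}, then transport back using Lemma \ref{lemma:Darboux_switch}, i.e.\ Corollary \ref{cor:DT_pop}. Your check that $(L^{\psi})^{\chi}=L$ for $\chi=1/(\sqrt P\psi)$ makes explicit a step the paper uses silently. It is routine, though, and not the main obstacle.

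The genuine gap is in uniqueness, at the step $\bs T=\bar{\bs T}$. Along the line $[\bs y^{[1]}]$ only $y_1$ moves; $y_0$ is fixed, and points of a population can have $y_0(z_j)=0$. For example, take $\bs T=(x^2,1)$. The pair $\bs y=(x^3,1)$ lies in the population of $(1,1)$: reproduce $(1,1)$ in direction $0$. Its line is $\{(x^3,c+dx^7)\}$, and $L_1(\bs y,\bs T)=x^{-2}(\partial^2-12/x^2)$ has $m_0=3$ while $\mathrm{ord}_0T_1=0$. So no choice of point on the line makes $y_0(z_j)\neq 0$. Proposition \ref{prop:population_from_sf} says nothing about such non-vanishing either. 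The exponent comparison therefore only yields what $\phi_1=\bar\phi_1$ already says, namely that $T_1/\bar T_1$ is the square of a rational function; it does not give $T_1=\bar T_1$.

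What is missing is to use the uniqueness statement of Proposition \ref{prop:population_from_sf} before $\bs T$ is known. After normalizing $\phi_1=\bar\phi_1$, take coprime pairs $f_i,\bar f_i$ with $f_iy_i=\bar f_i\bar y_i$, $i=0,1$. Then $\phi_1=\bar\phi_1$ gives $T_1f_1^2/f_0^2=\bar T_1\bar f_1^2/\bar f_0^2$. Together with $T_0T_1=\bar T_0\bar T_1=P$ it also gives $T_0f_0^2/f_1^2=\bar T_0\bar f_0^2/\bar f_1^2$. Coprimality of the denominators makes both sides of both identities polynomials. By Lemma \ref{lemma:sf_from_population}, $\bs y^\circ=(f_0y_0,f_1y_1)=(\bar f_0\bar y_0,\bar f_1\bar y_1)$ is super-fertile for this common $\bs T^\circ$. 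The uniqueness part of Proposition \ref{prop:population_from_sf} then gives $\bs T=\bar{\bs T}$ and $\bs y=\bar{\bs y}$ simultaneously. This also replaces your final appeal to Corollary \ref{cor:}, which is just the special case $\bs T=\bar{\bs T}$ of the same argument.
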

\begin{proof}
    Lemma \ref{lemma:ms01} and Lemma \ref{lemma:m0} show that $L$ can be obtained by a finite sequence of Darboux transformations from an operator satisfying the conditions of Lemma \ref{lemma:op_to_BA}. Therefore $L$ is obtained by a finite sequence of Darboux transformations from an operator of the form $L_1(\bs{y},\bs{T})$, where $\bs{y}$ is a critical point. Then the existence part follows from Corollary \ref{cor:DT_pop}. The existence of a one-dimensional family of pairs of polynomials follows from \eqref{eq:inv_do}.

    It remains to show that such a family is unique. Assume that there are some other pairs $\bar{\bs{y}}, \bar{\bs{T}}$ such that $L = L_1(\bar{\bs{y}}, \bar{\bs{T}})$. Then by adjusting the representatives $\bs{y}$ and $\bar{\bs{y}}$ on the corresponding lines we obtain 
    $$\frac{\bar{y}_1}{\sqrt{\bar{T}_1}\bar{y}_0} = \frac{y_1}{\sqrt{T_1}y_0}.$$
    
    There exist polynomials $f_0,f_1,\bar{f}_0,\bar{f}_1$ such that the pairs $f_0,\bar{f}_0$ and $f_1,\bar{f}_1$ are coprime and such that $f_iy_i = \bar{f_i}\bar{y}_i$, $i = 0,1$. This implies 
    \begin{equation}\label{eq:op_to_populations_pf1}
    \frac{T_1f_1^2}{f_0^2} = \frac{\bar{T}_1\bar{f}_1^2}{\bar{f}_0^2},\;\; \frac{T_0f_0^2}{f_1^2} = \frac{\bar{T}_0\bar{f}_0^2}{\bar{f}_1^2}.
    \end{equation}
    Since the denominators of both sides of equations \eqref{eq:op_to_populations_pf1} are coprime both sides of both equations are polynomials. By Lemma \ref{lemma:sf_from_population} we obtain a superfertile pair $\bs{y}^{\ci}=(f_0y_0,f_1y_1) = (\td f_0\td y_0,\td f_1\td y_1)$ corresponding to the pair $\bs{T}^{\ci} = \left(\frac{f_0^2T_0}{f_1^2},\frac{f_1^2T_1}{f_0^2}\right) = \left(\frac{\bar{f}_0^2\bar{T}_0}{\bar{f}_1^2},\frac{\bar{f}_1^2\bar{T}_1}{\bar{f}_0^2}\right)$. But then by Proposition \ref{prop:population_from_sf}, we have $\bar{\bs{T}}=\bs{T}$ and $\bar{\bs{y}}=\bs{y}$.
\end{proof}

\begin{cor}\label{cor:mf_limit}
    Any $\la$-monodromy free operator can be obtained as a limit of $\la$-monodromy free operators with modified exponents $m_{z_j} \leq k_j$, 
    $j = 1,\dots, n$ and  $m_s \in \{0,1\}$ for all other $s\in \C$.
\end{cor}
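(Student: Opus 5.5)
By Theorem \ref{thm:DO_to_populations}, a given $\la$-monodromy free operator $L$ can be written as $L=L_1(\bs y,\bs T)$ with $\bs y$ in a population of critical points for some $\bs T=(T_0,T_1)$ with $T_0T_1=P$. The plan is to approximate $\bs y$ inside that same population by pairs that are critical points, and to check that the operators attached to those critical points have the stated exponents.

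\textbf{Approximation.} Critical points are dense in any population (\cite{MV04}, recalled in Section \ref{subsec:populations}). So I would choose a sequence of generic pairs $\bs y^{(\nu)}$ in the population with $\bs y^{(\nu)}\to\bs y$. The natural way is to move along the finitely many reproduction parameters that lead from a critical point to $\bs y$, using the parametrization by the constants $c$ in $\td y_i+c\,y_i$. Since $\deg y_0,\deg y_1$ stay bounded on the relevant component, the coefficients of $U$ in $L_1(\bs y,\bs T)=P^{-1}(\pd^2-U)$ are rational functions of the coefficients of $y_0,y_1$. Hence $L_1(\bs y^{(\nu)},\bs T)\to L_1(\bs y,\bs T)=L$, coefficientwise as rational functions with bounded degrees. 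Each approximating operator is $\la$-monodromy free by Proposition \ref{prop:la_mf_fromcp}.

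\textbf{Exponents at the approximants.} For a generic pair $\bs y^{(\nu)}$, the function
$$\phi_1=\frac{y_1}{\sqrt{T_1}\,y_0}$$
lies in the kernel. Here $y_0,y_1$ are multiplicity free and coprime, and both are coprime to $T_0,T_1$ at the relevant points.
\begin{itemize}
\item At a zero $s$ of $y_0$ or of $y_1$ with $P(s)\neq0$, $\phi_1$ has a simple pole or a simple zero. The exponents are then $\{-1,2\}$, giving $m_s=1$, or $\{0,1\}$, giving $m_s=0$. A direct computation of $(\pd+\ln'\phi_1)(\pd-\ln'\phi_1)=\pd^2-\phi_1''/\phi_1$ confirms this.
\item At $z_j$, $\phi_1$ behaves like $(x-z_j)^{-\frac12\mathrm{ord}_{z_j}T_1}$. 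So the exponents are $-\frac12\mathrm{ord}_{z_j}T_1$ and $\frac12\mathrm{ord}_{z_j}T_1+1$, and $m_{z_j}=\frac12\mathrm{ord}_{z_j}T_1\le \frac{k_j}{2}\le k_j$.
\end{itemize}
Genericity is exactly what rules out extra cancellation between $y_0$, $y_1$ and $T_1$ at $z_j$. This gives the claimed bounds for every approximant.

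\textbf{Main obstacle.} The delicate step is to make sure the limit really is $L$ and not merely some operator with the same $P$. In particular, degrees must not drop and roots must not escape to infinity in the limit. I would handle this by working inside one stratum of the population: the closure of the line $[\bs y^{[i]}]\cong\mathbb{CP}^1$ and its iterates, along which the degree of the varying polynomial is constant for generic $c$. Concretely, I would proceed by induction on the number of reproductions from a critical point to $\bs y$. At each step I would perturb $c$ in $\td y_i+c\,y_i$ while keeping the previous coordinates generic, and use that genericity is a Zariski-open condition on each such line. The exponent bound at the $z_j$ holds uniformly, since it depends only on $\bs T$ and not on $\bs y$.
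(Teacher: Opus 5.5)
Your route is the paper's: write $L=L_1(\bs y,\bs T)$ by Theorem \ref{thm:DO_to_populations}, then approximate $\bs y$ by critical points in the same population. The problem is the exponent computation at the zeros of $P$, which contains a false step. Genericity only asks that $y_0$ be coprime to $T_0$ and $y_1$ be coprime to $T_1$. It does not prevent $y_0$ from having a simple zero at a point $z_j$ with $T_1(z_j)=0\neq T_0(z_j)$. At such a point $\phi_1\sim (x-z_j)^{-k_j/2-1}$, so $m_{z_j}=\frac{k_j}{2}+1$, not $\frac12\mathrm{ord}_{z_j}T_1$.

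Here is a concrete instance. Take $\bs T=(1,x)$, so $P=x$ and $k_1=1$. The pair $(x,1)$ is a reproduction of $(1,1)$ in direction $0$. It is generic, and it is fertile since $\Wr(x,-1)=1$ and $\Wr(1,x^4/4)=x^3$, so it is a critical point. Yet $L_1((x,1),\bs T)=x^{-1}\bigl(\pd^2-\frac{15}{4x^2}\bigr)$ has $m_0=\frac32>k_1$. So your sentence that genericity rules out extra cancellation at $z_j$ is wrong. Consequently the claim that every generic approximant satisfies the bound fails whenever some $k_j=1$. For $k_j\ge 2$ the slip is harmless, since $\frac{k_j}{2}+1\le k_j$.

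The repair fits into your perturbation scheme. Also require $y_0(z_j)\neq 0$ at every zero $z_j$ of $P$; this is another Zariski-open condition on the reproduction parameters, and you must check it is non-empty.

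\begin{enumerate}
\item Start the chain of reproductions at a critical point with this property. One exists: take a generic direction-$0$ reproduction $\tilde y_0+cy_0$ of any critical point. At a zero $z_j$ of $T_1$ with $T_0(z_j)\neq 0$, genericity gives $y_1(z_j)\neq 0$. Then $\Wr(y_0,\tilde y_0)(z_j)=T_0(z_j)y_1(z_j)^2\neq 0$, so $y_0,\tilde y_0$ do not both vanish at $z_j$. Hence $(\tilde y_0+cy_0)(z_j)\neq0$ for all but at most one $c$. At zeros of $T_0$, genericity already gives $y_0(z_j)\neq 0$.
\item Along the chain, direction-$0$ steps preserve the property for all but finitely many values of the parameter.
\item Direction-$1$ steps do not change $y_0$.
\end{enumerate}

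With this extra condition your computation is correct, and it even yields $m_{z_j}=\frac12\mathrm{ord}_{z_j}T_1\le \frac{k_j}{2}$. The paper's proof, which only invokes density of critical points, does not spell this check out either.
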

\begin{proof}
    Follows from Theorem \ref{thm:DO_to_populations}, since in any population the pairs of polynomials representing critical points are dense. 
\end{proof}

\subsection{The case $P(x) = x^k$.  }\label{subsec:class_one_point}

In this section we fix $$P(x) = x^{k}, \qquad k\in \Z_{\geq 0},$$
and apply the results of previous sections to give a full classification of $\la$-monodromy free operators.

\begin{theorem}\label{thm:MV_crit}\cite{MV14}
    Let $T_0(x) = x^{2m},\; T_1(x)=x^{2l}$, where $2m,2l\in \Zp$.
    
    Then there exists a unique population of critical points corresponding to $\bs{T}$. This population is the population of trivial critical point $(1,1)$. 
 \qed
\end{theorem}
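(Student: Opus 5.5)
The plan is to reduce the statement to a uniqueness-of-critical-points claim and then exploit the fact that all points $z_r$ coincide with $0$. Since every population contains pairs corresponding to critical points (they are dense), it suffices to show that every critical point $\bs y=(y_0,y_1)$ for $\bs T=(x^{2m},x^{2l})$ lies in the population of $(1,1)$. The natural mechanism is to show that from any critical point one can perform a finite sequence of reproductions that strictly decreases $\deg y_0+\deg y_1$ until reaching $(1,1)$. Alternatively, and more cleanly, we use the explicit description of the population of $(1,1)$ from Proposition \ref{prop:AM_gen_recc} together with a dimension/degree count.

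First step: analyze the Bethe ansatz equations \eqref{eq:Bethe_ansatz} with a single point $z=0$. Multiplying the equation for $s_i$ by $s_i$ and summing over $i$, and similarly for $t_j$, gives the standard identities $\binom{d_0}{2}-d_0d_1-l d_0\cdot(\ldots)=0$. More precisely, $\sum_{i\neq i'}\frac{s_i}{s_i-s_{i'}}=\binom{d_0}{2}$, while the mixed sums combine across the two families. This yields quadratic relations on $(d_0,d_1)$ which force the weight $\mu-d_0\al_0-d_1\al_1$ to lie in the shifted Weyl orbit of $\mu$, i.e. $(d_0,d_1)$ equals the degree pair of some $(\theta^{(i)}_{a},\theta^{(i)}_{b})$. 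Second step: by the Wronskian characterization (Proposition \ref{prop:cp_wr}), together with homogeneity, the scaling $x\mapsto cx$ acts on the set of critical points. Because $T_0,T_1$ are monomials, the space of solutions of $\Wr(y_0,\td y_0)=x^{2m}y_1^2$, $\Wr(y_1,\td y_1)=x^{2l}y_0^2$ with fixed degrees can be analyzed via the reproduction operation. Reproducing in the direction that lowers degree (which exists unless $(d_0,d_1)=(0,0)$, by the Weyl orbit computation: the degree change is governed by $\mathrm s_i\cdot$), we get a pair in the same population with smaller total degree. That pair is again a generic pair for a suitable choice of the constant, hence a critical point. Induction on $d_0+d_1$ then ends at $(1,1)$.

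The main obstacle is making the degree-lowering reproduction well-defined. Given $y_0$ of degree $d_0$, the solutions $\td y_0$ of $\Wr(y_0,\td y_0)=x^{2m}y_1^2$ form $\td y_0+cy_0$. Their degree is $2d_1+2m+1-d_0$ when this differs from $d_0$, and we need this to be smaller than $d_0$ for at least one direction whenever $\bs y\neq(1,1)$. I would verify this from the Weyl group picture: the weights in the orbit of $\sum\mu_r$ under the shifted action form a chain, and exactly one of $\mathrm s_0,\mathrm s_1$ moves down the chain except at the top, which is the trivial point. Once this holds, the resulting pair lies in the population of $\bs y$, so the population of $\bs y$ contains $(1,1)$. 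Populations are connected components, hence they coincide, giving uniqueness. Existence is immediate from Proposition \ref{prop:AM_gen_recc}.
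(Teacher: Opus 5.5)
The paper does not prove this statement; it is imported from \cite{MV14}. So I only assess your argument on its own terms. As written, two steps fail.

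\emph{First gap: the orbit claim.} Multiplying the Bethe equations by $s_i$, $t_j$ and summing yields only \emph{one} relation on the degrees. Each of the two sums contains the undetermined mixed term $\sum_{i,j}s_i/(s_i-t_j)$, which cancels only in their total. What remains is $Q(d_0,d_1):=(d_0-d_1)^2-(2m+1)d_0-(2l+1)d_1=0$; this is the Euler identity, since $\Phi$ is homogeneous of degree $Q(d_0,d_1)$. This relation does not put $\Lambda-d_0\al_0-d_1\al_1$, with $\Lambda=2m\omega_0+2l\omega_1$, into the shifted Weyl orbit. For $T_0=T_1=x^7$ it reads $(d_0-d_1)^2=8(d_0+d_1)$ and holds for $(d_0,d_1)=(15,3)$. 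But the maps $d_0\mapsto 2d_1+8-d_0$ and $d_1\mapsto 2d_0+8-d_1$ keep the orbit of $(0,0)$ inside $8\Z\times 8\Z$. So your ``chain'' argument for the existence of a degree-lowering direction has no basis.

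\emph{Second gap, more serious: the induction through critical points.} Inside $\langle y_i,\td y_i\rangle$ the element of lower degree is unique up to a scalar, since adding $c\,y_i$ with $c\neq 0$ restores degree $d_i$. So there is no constant to choose, and the lowered pair need not be generic. For example, take $T_0=T_1=x$ and $c\neq 0$. The pair $(x^6/24+c,\ x^2/2)$ lies in the population of $(1,1)$. A generic reproduction of it in direction $1$ is a critical point of degrees $(6,12)$. Its only degree-lowering reproduction leads back to $(x^6/24+c,\ x^2/2)$, where $y_1$ has a double zero at the zero of $T_1$. At that pair the Bethe equations, and hence your relation, are not available, so the next induction step is unjustified.

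\emph{How to repair it.} Stop re-deriving the relation at each step. A direct check gives
$Q(2d_1+2m+1-d_0,\,d_1)=Q(d_0,d_1)=Q(d_0,\,2d_0+2l+1-d_1)$; this is the invariance of $|\lambda+\rho|^2$ under the shifted action. The degree proposition of \cite{MV04} recalled in Section \ref{sec:populations} is a pure degree count, valid for every fertile pair. Hence $Q$ is constant along a population. Since it vanishes at a critical point, it vanishes at \emph{every} point of a population, generic or not.

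Now take a point of minimal $d_0+d_1$ in a given population. No reproduction lowers its degrees, so $a=d_0-d_1$ satisfies $-l-\frac12<a<m+\frac12$.
\begin{itemize}
\item If $a>0$, then $Q=a(a-2m-1)-(2m+2l+2)d_1<0$.
\item If $a<0$, the symmetric computation gives $Q<0$.
\item If $a=0$, then $Q=-(2m+2l+2)d_1=0$ forces $d_0=d_1=0$.
\end{itemize}
Thus every population contains $(1,1)$. Since reproduction is reversible, every population coincides with the population of $(1,1)$. Existence is clear because $(1,1)$ is a critical point. With these replacements, your degree-descent strategy becomes a complete and elementary proof.
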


In fact, the $\sLh$-population is unique if $T_0=(x-a)^{2m}$ and $T_1=(x-a)^{2l}$, where $a\in\C$, and in all other cases we expect to have infinitely many populations.

\begin{theorem}\label{thm:op_clas}
    Let $L$ be a $\la$-monodromy free operator and let $m_0$ be the modified exponent at $x=0$. Let $m \in \frac{1}{2}\Zp,\; m \leq \frac{k}{4}$,  be the unique number such that either $m_0 - m \in \left(\frac{k}{2}+1\right)\Zp$ or $m_0 - (\frac{k}{2}-m) \in \left(\frac{k}{2}+1\right)\Zp$. Then $L$ is obtained  from the operator $x^{-k}\left(\pd^2 - \frac{m(m+1)}{x^2}\right)$  by a finite sequence of Darboux transformations.
\end{theorem}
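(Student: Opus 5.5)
Combine Theorem \ref{thm:DO_to_populations} with Theorem \ref{thm:MV_crit}. Since $P=x^k$, Theorem \ref{thm:DO_to_populations} gives $L=L_1(\bs y,\bs T)$ with $\bs y$ in a population for $\bs T=(T_0,T_1)$ and $T_0T_1=x^k$. Poles of $T_i$ are only at $0$, so $T_0=x^{2l'}$, $T_1=x^{2m'}$ with $2l'+2m'=k$. By Theorem \ref{thm:MV_crit} this population is the population of $(1,1)$, and $\bs y$ is reached from $(1,1)$ by finitely many reproductions. Corollary \ref{cor:Darboux_rep} and Lemma \ref{lemma:Darboux_switch} identify each reproduction with a Darboux transformation of the corresponding operator. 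Hence $L$ is obtained by finitely many Darboux transformations from $L_1((1,1),\bs T)$ or $L_0((1,1),\bs T)$, depending on the parity of the path in the picture after Proposition \ref{prop:AM_gen_recc}.

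Next compute the base operators. For $\bs y=(1,1)$ we have $\phi_1=T_1^{-1/2}=x^{-m'}$. Then
\[
L_1=x^{-k}\Big(\pd-\tfrac{m'}{x}\Big)\Big(\pd+\tfrac{m'}{x}\Big)=x^{-k}\Big(\pd^2-\tfrac{m'(m'+1)}{x^2}\Big).
\]
Similarly $L_0=L_1^{\phi_1}=x^{-k}(\pd^2-l'(l'+1)/x^2)$, with $l'=\frac k2-m'$. So every such $L$ comes by Darboux transformations from $x^{-k}(\pd^2-\mu(\mu+1)/x^2)$ with $\mu\in\{m',\frac k2-m'\}$.

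The operators for $\mu$ and $\frac k2-\mu$ are Darboux transforms of each other via $\psi=x^{-\mu}$, by Lemma \ref{lemma:Darboux_switch}. Formula \eqref{eq:exponents_DT} confirms this directly: $\max(\frac k2-\mu,-1-\frac k2+\mu)=\frac k2-\mu$. We may therefore replace $\mu$ by $\min(\mu,\frac k2-\mu)=:m\le\frac k4$.

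It remains to show that this $m$ is the number described in the statement, namely that $m_0\equiv m$ or $m_0\equiv\frac k2-m$ modulo $(\frac k2+1)$, with $m_0-m\ge0$ (or $m_0-(\frac k2-m)\ge0$) a multiple of $\frac k2+1$. By \eqref{eq:exponents_DT}, a Darboux transformation sends $m_0$ either to $\frac k2-m_0$ or to $m_0+\frac k2+1$ (via $\mu_0=m_0+1$), or to $m_0-\frac k2-1$ when $m_0$ is large (via $\mu_0=-m_0$). Every move preserves the set $\{m_0,\frac k2-m_0\}$ modulo $\frac k2+1$, up to the sign flip $x\mapsto\frac k2-x$. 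Starting from $m$, the value $m_0$ therefore lies in $m+(\frac k2+1)\Z$ or in $\frac k2-m+(\frac k2+1)\Z$. Since $m_0\ge0$ and $m,\frac k2-m\in[0,\frac k2]$, the appropriate difference is a nonnegative multiple.

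Uniqueness of $m$ follows because the residues of $m$ and $\frac k2-m$ modulo $\frac k2+1$ determine $\{m,\frac k2-m\}$, and hence $m\le\frac k4$. The step needing care is the bookkeeping in the $\max$ branch of \eqref{eq:exponents_DT}, together with the edge cases $k=0$ and $m=\frac k4$. Lemma \ref{lemma:exp_obst} excludes the problematic half-integer values, so each residue class is hit consistently.
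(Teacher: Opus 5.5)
Your proof is correct and follows the paper's route. Theorems~\ref{thm:DO_to_populations} and \ref{thm:MV_crit} place $L$ in the population of $(1,1)$, Corollary~\ref{cor:Darboux_rep} turns reproductions into Darboux transformations, and one further transformation by $x^{-\mu}$ replaces $\mu$ with $\frac k2-\mu$ to reach $m\le\frac k4$. Your extra step, that the moves allowed by \eqref{eq:exponents_DT} preserve the residues $\{m_0,\frac k2-m_0\}$ modulo $\frac k2+1$, is a real improvement: the paper only gets existence of $m$ from Lemma~\ref{lemma:exp_obst} and silently identifies it with $\min(m',l')$ coming from $\bs T$, while your argument proves that identification.
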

\begin{proof}
    The number $m$ exists by Lemma \ref{lemma:exp_obst}. Clearly it is unique.

    By Theorem \ref{thm:DO_to_populations} there exists a point $\bs{y}$ such that $L = L_1(\bs{y}, \bs{T})$. Here $\bs{y}$ is in a population corresponding to $\bs{T} = (T_0,T_1)$ such that $T_0T_1 = P = x^k$. Therefore, we have $T_0(x) = x^{2m^{\prime}},\; T_1(x) = x^{2l^{\prime}}$, where $2m^{\prime}, 2l^{\prime}\in \Zp$ and $2m^{\prime}+2l^{\prime} =k$. By Theorem \ref{thm:MV_crit} $\bs{y}$ is in the population of the trivial critical point  $(1,1)$. Therefore by Corollary \ref{cor:Darboux_rep} the operator $L = L_1(\bs{y},\bs{T})$ is obtained by a finite number of Darboux transformations from $L_1((1,1),\bs{T}) = \pd^2 - \frac{m^{\prime}(m^{\prime}+1)}{x^2}$.
    
    If $m^{\prime} \leq \frac{k}{4}$ we set $m = m^{\prime}$. Otherwise we have $l^{\prime} = \frac{k}{2}-m^{\prime} \leq \frac{k}{4}$. In this case we apply one more Darboux transformation to $L_{1}((1,1),\bs{T})$ with respect to $\psi = \frac{1}{\sqrt{T_1}} = x^{-l^{\prime}}$.
    The result is the operator $L_0((1,1),\bs{T}) = \pd^2 - \frac{l^{\prime}(l^{\prime}+1)}{x^2}$ (see Lemma \ref{lemma:Darboux_switch}). Then we set $m = l^{\prime}$.
    
\end{proof}

The case $P(x)=1$ of Theorem \ref{thm:op_clas} can be deduced from \cite{DG86}.

\section{Populations of trivial critical point and change of variables in \text{KdV} hierarchy}\label{sec:KdV}

\subsection{Adler--Moser polynomials}
In the case of $\bs{T}=(1,1)$, the unique population of critical points has been extensively studied, see \cite{AM78, dV17}. The corresponding sequence of polynomials $\{\theta_{n}(x)\}$ (cf. Section \ref{subsec:triv_pop}) is known as Adler--Moser polynomials. 

The following celebrated result of Adler and Moser connects Adler--Moser polynomials to tau-functions of the KdV hierarchy.

Denote by $c_{n+1}$ an integration constant appearing from resolving recurrence \eqref{eq:AM_gen_recc} for $\theta_{n}=\theta_{n+1}^{(0)}$ with $T_{n} \equiv 1$ for all $n$. Then the polynomial $\theta_{n}$ depends on $n$ parameters $c_1,\dots,c_n$.

\begin{theorem}\label{thm:AM}\cite{AM78}
There exists a change of variables $c_j = a_j t_j + p_j(t_1,\dots, t_{j-1})$, where $a_j \in \C^\times$ and $p_j$ are polynomials such that $\theta_n(t_1,\dots, t_n)$ is a tau function of the KdV hierarchy.
\end{theorem}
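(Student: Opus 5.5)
The plan is to realize the Adler--Moser polynomials as Wronskians of Schur-type polynomials and then use Sato theory. The starting point is Proposition \ref{prop:AM_gen_recc} in the case $\bs T=(1,1)$. There $L_i=\pd^2$ and Proposition \ref{prop:phi_rec} gives $\phi_{n+1}=\iint\phi_n$, so $\phi_{n}''=\phi_{n-1}$ and $\theta_n=\Wr(\phi_1,\dots,\phi_n)$. Since $a_n$ only enters through powers of $T_i=1$, no normalization is needed.

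\textbf{Step 1: a distinguished choice of the $\phi_j$.} Introduce the KdV times $\bs t=(t_1,t_3,t_5,\dots)$ with $t_1=x$, and the generating function
$$e^{\xi(\bs t,k)}=\sum_{m\ge0}p_m(\bs t)k^m,\qquad \xi(\bs t,k)=\sum_{j\ \mathrm{odd}}t_jk^j .$$
The elementary Schur polynomials $p_m$ satisfy $\pd_x p_m=p_{m-1}$ and $\pd_{t_j}p_m=p_{m-j}=\pd_x^{\,j}p_m$. Hence $\phi_j:=p_{2j-2}(\bs t)$ satisfies $\phi_j''=\phi_{j-1}$, with $\phi_1=1$. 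Moreover, the full freedom in the integration constants is obtained by shifting the times $t_3,t_5,\dots$ by arbitrary constants and adding lower $\phi$'s. As noted after \eqref{eq:theta_Wr}, the latter do not change the Wronskian. Thus, up to the parametrization, every $\theta_n$ equals
$$\tau_n(\bs t)=\Wr_x\big(p_0,p_2,\dots,p_{2n-2}\big)(\bs t),$$
which is the Schur polynomial of the staircase partition $(n,n-1,\dots,1)$, evaluated with the even times set to zero.

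\textbf{Step 2: $\tau_n$ is a KdV tau function.} Each $\phi_j$ satisfies the linear heat-type hierarchy $\pd_{t_j}\phi=\pd_x^{\,j}\phi$. A Wronskian of finitely many such functions is a KP tau function (Sato; equivalently the Wronskian form of the KP Hirota equations). For the KdV reduction one needs the $2$-reduction: the span $\langle\phi_1,\dots,\phi_n\rangle$ is mapped by $\pd_x^2$ into itself. Consequently $\tau_n$ does not depend on the even times, since $\pd_{t_{2m}}$ acts on the Wronskian as a multiple of it, which here is zero. So $\tau_n$ is a KdV tau function, and $u=-2\pd_x^2\ln\tau_n$ solves all KdV flows.

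\textbf{Step 3: the change of variables.} Fix $t_3,t_5,\dots,t_{2n-1}$ as parameters. Expand $\tau_n(x,t_3,\dots)$ as a polynomial in $x$ and compare it with $\theta_n(x;c_1,\dots,c_n)$ obtained from the recursion \eqref{eq:AM_gen_recc}. The induction in the proof of Proposition \ref{prop:AM_gen_recc} shows that the new constant $c_{n}$ enters as $\theta_n\mapsto\theta_n+c\,\theta_{n-2}$. On the Schur side, increasing the highest relevant time $t_{2n-1}$ contributes through $\pd_{t_{2n-1}}\tau_n$. Its leading part is a nonzero multiple of $\theta_{n-2}$, which one checks using \eqref{eq:Wr_th_pf_2} and the fact that $\pd_{t_{2n-1}}p_{2n-2}=p_{-1}+\dots$ involves only lower terms. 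Inductively this yields $c_j=a_jt_{2j-1}+p_j(t_1,\dots,t_{2j-3})$ with $a_j\ne0$ and $p_j$ polynomial. After relabelling $t_{2j-1}\to t_j$, this is the triangular change of variables in the statement.

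\textbf{Main obstacle.} I expect the hardest part to be the triangularity with nonzero leading coefficient $a_j$. One has to show that the $t_{2j-1}$-derivative of $\tau_j$ is exactly a nonzero constant times $\tau_{j-2}$ modulo the span already accounted for. I would first try to obtain this from the Wronskian identity \eqref{eq:Wr_th_pf_1} together with $\pd_{t_{2j-1}}=\pd_x^{2j-1}$ on each $\phi_i$. If that becomes messy, I would fall back on the explicit Schur-function expansion of the staircase partition, where removing a border strip of length $2j-1$ gives the staircase of size $j-2$ with sign $\pm1$.
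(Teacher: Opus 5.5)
The paper gives no proof of this theorem. It simply quotes \cite{AM78}, whose argument predates Sato theory. Your proposal is therefore a genuinely different route, and it is essentially correct. You identify $\theta_n$ for $\bs T=(1,1)$ with $\Wr(p_0,p_2,\dots,p_{2n-2})$ evaluated at the odd times. You then use Sato's theorem that a Wronskian of solutions of $\pd_{t_k}\phi=\pd_x^k\phi$ is a KP tau function. The KdV reduction comes from $\pd_x^2\langle\phi_1,\dots,\phi_n\rangle\subset\langle\phi_1,\dots,\phi_n\rangle$: this gives $\pd_{t_{2m}}\tau_n=\mathrm{tr}(A^m)\,\tau_n=0$ with $A$ nilpotent, or equivalently $K\pd^2K^{-1}$ is a differential operator for the monic $K$ with that kernel. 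Compared with a bare citation, this yields an explicit picture: $\theta_n$ becomes a staircase Schur polynomial in the odd times, with an explicit unitriangular change of variables, and it meshes with the paper's own Wronskian identities. The price is that Sato's theorem and the KP-to-KdV reduction are used as black boxes.

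Several index slips need fixing, and the third one breaks Step 3 as written.
\begin{enumerate}
\item $\Wr(p_0,\dots,p_{2n-2})$ is, up to sign, the Schur polynomial of $(n-1,\dots,1)$, not $(n,\dots,1)$. For $n=3$ one gets $x^3/3-t_3=s_{(2,1)}$. This is the shift $\theta_n=\theta^{(0)}_{n+1}$ used in the paper.
\item You must also shift $t_1$, i.e.\ take $x\mapsto x+s_1$. Otherwise $\Wr(1,\phi_2)=x+\alpha$ with $\alpha\neq0$ is never reached.
\item $\tau_n$ depends only on $t_1,t_3,\dots,t_{2n-3}$, because $p_{2n-2}$ has weight $2n-2$. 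In particular $\pd_{t_{2n-1}}p_{2n-2}=p_{-1}=0$, so your derivative in Step 3 vanishes identically.
\end{enumerate}
The relevant time is $t_{2n-3}$. It enters only the last entry, and linearly (through the monomial $t_1t_{2n-3}$), with $\pd_{t_{2n-3}}p_{2n-2}=p_1=x+s_1$. Then \eqref{eq:Wr_th_pf_2} with $T\equiv1$ and $\chi=x$ gives exactly $\pd_{t_{2n-3}}\tau_n=(-1)^n\tau_{n-2}$; the constant $s_1$ drops because $p_0=1$ is already an entry. So $\tau_n$ is affine in its top time with slope $\pm\tau_{n-2}$. This is precisely the freedom $\theta_n\mapsto\theta_n+c\,\theta_{n-2}$, and your anticipated ``main obstacle'' (leading parts, border strips) disappears.

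Even more directly, the coefficient of $x$ in $p_{2j-2}(x+s_1,s_3,\dots)$ is $p_{2j-3}(s_1,s_3,\dots)=s_{2j-3}+(\text{polynomial in } s_1,\dots,s_{2j-5})$. Normalizing away the constant integration constants, which do not affect the Wronskian, leaves this top term unchanged. This gives the triangular change of variables with nonzero leading coefficients, and it also justifies your claim in Step 1 that every choice of integration constants is reached by the times.
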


Explicitly Theorem \ref{thm:AM} means the following. Let $L = \pd^2 - u(x,t_1,t_2,t_3,\dots)$. The flows of KdV hierarchy are given by
\begin{equation}\label{eq:Kdv_H}
    \pd_{t_{j}}(L) = [(L^{\frac{2j-1}{2}})_{+}, L].
\end{equation}

Then Theorem \ref{thm:AM} asserts that the Adler Moser polynomials give solutions of \eqref{eq:Kdv_H} of the form 
\begin{equation}\label{eqn:AM_ops}
L_{n}(t,x)= L_{0}((\theta_{n+1},\theta_{n}),(1,1)) = \pd^2 - 2\ln''(\theta_n).
\end{equation}

The earlier result of \cite{AMM77} implies that any solution $u(x,t_1,t_2,t_3,\dots)$ of KdV that stays rational as a function of $x$ for any $t =(t_1,t_2,\dots)$ has the form of the potential of an operator \eqref{eqn:AM_ops} for some $n$. 
 
\subsection{Change of variables in populations and in KdV}
Note that equations \eqref{eq:cp_wr} possess a nice behavior with respect to polynomial changes of variable.
\begin{lemma}
    Let $f\in \C[x]$ be a non-constant polynomial. Let $\mathcal{P}$ be a population of critical points corresponding to $\bs{T} = (T_0,T_1)$. Then the set $\{(y_0( f(x)),y_1(f(x)))\ 
    |\ (y_0,y_1)\in\mathcal{P}\}$ is a population corresponding to the pair of polynomials $(f^{\prime}(x)T_0(f(x)), f^{\prime}(x)T_1(f(x)))$.
\end{lemma}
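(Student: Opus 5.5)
The natural route is the chain rule for Wronskians under a change of variable. For any functions $g,h$ and $F(x)=g(f(x))$, $H(x)=h(f(x))$ we have
$$\Wr(F,H)(x)=f'(x)\,\Wr(g,h)(f(x)),$$
since $F'=f'\cdot g'(f)$ and $H'=f'\cdot h'(f)$. So if $\Wr(y_0,\tilde y_0)=T_0y_1^2$, then
$$\Wr\big(y_0(f),\tilde y_0(f)\big)=f'\,T_0(f)\,y_1(f)^2,$$
and similarly in direction $1$. This shows that the pair $\bs y^f:=(y_0(f),y_1(f))$ is fertile with respect to $\bs T^f:=(f'T_0(f),f'T_1(f))$ whenever $\bs y$ is fertile with respect to $\bs T$.

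The second step is to compare reproductions on both sides. Every $\tilde y_0$ with $\Wr(y_0,\tilde y_0)=T_0y_1^2$ gives a reproduction $\tilde y_0(f)$ of $\bs y^f$. Conversely, the solutions $Y$ of $\Wr(y_0(f),Y)=f'T_0(f)y_1(f)^2$ form the affine line $\tilde y_0(f)+c\,y_0(f)$, because the homogeneous equation $\Wr(y_0(f),Y)=0$ forces $Y$ to be a multiple of $y_0(f)$. Hence reproductions of $\bs y^f$ correspond bijectively to reproductions of $\bs y$, composed with $f$. Iterating, the image under $f$ of the population of $\bs y$ equals the set of all pairs obtained from $\bs y^f$ by iterated reproductions; in particular every point is super-fertile.

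It remains to check that this set is a population of critical points for $\bs T^f$, i.e. that it contains a generic pair (coprime, multiplicity free, $y_i$ coprime to $T^f_i$). This is the main obstacle, since composing with $f$ can create multiple roots: if $y_0(a)=0$ and $f'(b)=0$ with $f(b)=a$, then $y_0(f)$ has a multiple root at $b$, and $b$ is also a zero of $T^f_0$. My plan is to use density of critical points in $\mathcal P$ (see \cite{MV04}) together with the fact that the roots of elements of the population move as one varies the reproduction parameters $c$. I would choose $\bs y\in\mathcal P$ generic for $\bs T$ and, in addition, with no root of $y_0y_1$ among the finitely many critical values of $f$ or among the values $f(\text{zeros of } T_i)$. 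This is achievable because these are finitely many closed conditions, each failing on a proper subvariety of the irreducible components of the (closure of the) population. For such a $\bs y$, the pair $\bs y^f$ is multiplicity free, since $f$ is unramified over the roots. It is coprime, since $y_0$ and $y_1$ have distinct roots. And it avoids the zeros of $f'T_i(f)$. Proposition \ref{prop:cp_wr} then makes $\bs y^f$ a critical point, and its population is the image described above.

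If the properness of those avoidance conditions is hard to justify directly, I would fall back on Proposition \ref{prop:population_from_sf}. Since $\bs y^f$ is super-fertile, that proposition gives some population, possibly for a modified $\bs T$. I would then argue that the gcd of all first coordinates of the image set is $1$, because $\gcd(y_0(f),\tilde y_0(f)+cy_0(f))$ over all $c$ reduces to the pullback of the gcd over $\mathcal P$, which is $1$ by the argument in Lemma \ref{divide fertile lem}. Hence $f_0=f_1=1$ and $\bs T$ is unchanged.
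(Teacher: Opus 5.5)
Your proposal is correct and follows the paper's proof. Both use the chain rule $\Wr(g(f),h(f))=f'\,\Wr(g,h)(f)$ and then choose a generic point of $\mathcal P$ whose roots avoid the critical values of $f$. The paper only asserts that such a point exists; either your gcd fallback via Proposition \ref{prop:population_from_sf}, or two generic reproductions (directions $0$ then $1$) starting from any critical point, justifies it. You also spell out the bijection between reproductions of $\bs y$ and of $(y_0(f),y_1(f))$, which the paper leaves implicit.
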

\begin{proof}
    Observe that 
    $$
    \Wr(y_i(f(x)), \tilde{y}_i(f(x)))  = f'(x)T_{i}(f(x))y_{i+1}(f(x))^2.
    $$
    There exists a generic pair $\bs{y}= (y_0,y_1)$ in $\mathcal{P}$ such that $y_0$ and $y_1$ do not vanish at $f(z_j), f(\bar{z}_j)$ where $z_j$ are zeroes of $T_0T_1$ and $\bar{z}_j$ are zeroes of $f'(x)$. Then the pair $(y_0(f(x)),y_1(f(x)))$ is generic with respect to $(f^{\prime}(x)T_0(f(x)), f^{\prime}(x)T_1(f(x)))$.
\end{proof}

\begin{cor}\label{cor:diag_pop_from_AM}
    For any $T(x)\in \C[x]$ the population of critical points of $(1,1)$ corresponding to $(T(x),T(x))$ is given by $\{\theta_{n}(\int T(x))\}$, where $\theta_{n}(x)$ are Adler--Moser polynomials.\qed
\end{cor}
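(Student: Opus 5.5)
The plan is to apply the change of variables lemma just proved with $f(x)=\int T(x)\,dx$, a polynomial primitive of $T$, so that $f'=T$. We use it on the unique population of $(1,1)$ corresponding to $\bs T=(1,1)$. By definition, that population consists of the pairs of consecutive Adler--Moser polynomials: the pairs $(\theta_{n+1},\theta_n)$ and $(\theta_n,\theta_{n+1})$, which are the pairs from Proposition \ref{prop:AM_gen_recc} with $T_0=T_1=1$. By the lemma, the set of pairs $(\theta_{n+1}(f(x)),\theta_n(f(x)))$ and $(\theta_n(f(x)),\theta_{n+1}(f(x)))$ is a population corresponding to $(f'\cdot 1, f'\cdot 1)=(T,T)$.

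It remains to check that this population is the population of $(1,1)$ and not some other one. This is immediate: $\theta_0=\theta_1=1$, so the pair $(1,1)=(\theta_1(f),\theta_0(f))$ belongs to the image population. Populations are connected components, namely the orbits under reproduction of a critical point, so the image is the population containing $(1,1)$. One also has to check that $(1,1)$ is genuinely a critical point for $(T,T)$. It is generic, since $1$ is coprime to everything, and it is fertile, since $\Wr(1,\int T)=T$. Alternatively, genericity of some image point is already guaranteed by the proof of the lemma.

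The one point that needs care is matching the free parameters. The pairs obtained by reproduction from $(1,1)$ for $(T,T)$ are, by Proposition \ref{prop:AM_gen_recc}, the $\theta^{(i)}_n$ built from $\phi_{n+1}=\int T\int T\phi_n$. Substituting $u=f(x)$ gives $\int T(x)g(f(x))\,dx=G(f(x))$ with $G'=g$, so $\phi_n(x)=\Phi_n(f(x))$, where $\Phi_n$ is the $T\equiv1$ sequence. The integration constants correspond bijectively. One could then verify the Wronskian formula \eqref{eq:theta_Wr} directly via $\Wr_x(g_1\circ f,\dots,g_n\circ f)=(f')^{n(n-1)/2}\,(\Wr_u(g_1,\dots,g_n))\circ f$ and $a_n+a_{n-1}=n(n-1)/2$.

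This direct check is optional, because the lemma already identifies the whole population set-theoretically. There is therefore no real obstacle; the only step requiring attention is confirming that the image contains $(1,1)$, which pins it down as the population of the trivial critical point.
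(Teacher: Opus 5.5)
Your proposal is correct and is essentially the paper's argument: the corollary is recorded as an immediate consequence of the change-of-variables lemma, applied with $f=\int T$ (so $f'=T$) to the Adler--Moser population for $\bs T=(1,1)$. Your check that $(1,1)=(\theta_1(f),\theta_0(f))$ lies in the image and is a critical point for $(T,T)$ is exactly what identifies the image as the population of $(1,1)$. Your optional direct verification is also correct and gives an independent proof via Proposition \ref{prop:AM_gen_recc}; it uses $\phi_n=\Phi_n\circ f$ with matching integration constants, $\Wr_x(g_1\circ f,\dots,g_n\circ f)=(f')^{n(n-1)/2}\,\Wr_u(g_1,\dots,g_n)\circ f$, and $a_n+a_{n-1}=n(n-1)/2$.
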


The change of variables can also be done on the side of KdV hierarchy. This gives a family of potentials solving a non-monic version of equations \eqref{eq:Kdv_H}.

For a non-zero function $g(x)$ in a variable $x$ and a pseudo-differential operator $D$ denote by $\mathrm{Ad}_{g(x)}(D) = g(x)Dg(x)^{-1}$.

Let $L$ be a differential operator and let $f(\xi)$ be any non-constant rational function. Let
\begin{equation}
\tilde{L} =\mathrm{Ad}_{f'(\xi)^{-1/2}}(L|_{x=f(\xi)}) = \frac{1}{f'(\xi)^2}\left(\pd_\xi^2 - (f'(\xi)^2u(f(\xi))-\frac{1}{2}Sf(\xi))\right).
\end{equation}

Here $Sf(\xi) = \frac{f'''(\xi)}{f'(\xi)}-\frac{3 f''(\xi)^2}{2 f'(\xi)^2}$ is Schwarz derivative of $f$.

\begin{lemma}\label{lemma:KdV_var_change}
    Assume that $L$ satisfies the KdV hierarchy \eqref{eq:Kdv_H}. Then $\tilde{L}$ satisfies
    \begin{equation}\label{eq:nmKdV}
    \pd_{t_j}(\tilde{L}) = [(\tilde{L}^{\frac{2j-1}{2}})_{+},\tilde{L}],\qquad j = 1,2,\dots.
    \end{equation}
\end{lemma}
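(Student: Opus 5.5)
The plan is to treat $\tilde L$ as a conjugation of $L$ by an invertible operation that intertwines the calculus of pseudo-differential operators, and then transport the KdV hierarchy equations term by term.

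Denote by $\Phi$ the map sending a function $g(x)$ to $f'(\xi)^{-1/2}\, g(f(\xi))$. Under the substitution $x=f(\xi)$ we have $\pd_x = f'(\xi)^{-1}\pd_\xi$. Hence pulling back followed by conjugation by $f'(\xi)^{-1/2}$ gives an algebra homomorphism
$$D\mapsto \tilde D=\mathrm{Ad}_{f'(\xi)^{-1/2}}(D|_{x=f(\xi)}).$$
It is defined on the algebra of differential operators in $x$ and extends to pseudo-differential operators in $\pd_x$ with coefficients rational in $x$. To see the extension, expand $\pd_x^{-1}=(f'^{-1}\pd_\xi)^{-1}=\pd_\xi^{-1}f'$ formally as a pseudo-differential operator in $\pd_\xi$.

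I will check three properties in turn.

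1. The map is a homomorphism of associative algebras. Both pullback by a change of variables and conjugation by a fixed function are homomorphisms, and the extension to $\pd_x^{-1}$ is compatible with products because it is defined through the inverse of the image of $\pd_x$.

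2. It commutes with taking roots. Since $\tilde L$ has leading term $f'^{-2}\pd_\xi^2$, and $(L^{1/2})^{\sim}$ squares to $\tilde L$ with leading term $f'^{-1}\pd_\xi$, uniqueness of the square root with prescribed leading symbol gives $(L^{1/2})^{\sim}=\tilde L^{1/2}$. Consequently $(L^{(2j-1)/2})^{\sim}=\tilde L^{(2j-1)/2}$.

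3. It preserves the differential part. A differential operator in $x$ maps to a differential operator in $\xi$. The key point is that a purely negative operator $\sum_{i<0}a_i\pd_x^{i}$ maps to a purely negative operator in $\pd_\xi$. This holds because $\pd_x^{-1}=\pd_\xi^{-1}f'$ has order $-1$, conjugation by a function preserves order, and products of negative-order operators stay negative. Therefore $(D_+)^{\sim}=(\tilde D)_+$.

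Combining these, apply the map to $\pd_{t_j}L=[(L^{(2j-1)/2})_+,L]$. The left side becomes $\pd_{t_j}\tilde L$, since $f$ does not depend on $t$ and the map is linear in the coefficients. The right side becomes $[(\tilde L^{(2j-1)/2})_+,\tilde L]$. This is exactly \eqref{eq:nmKdV}.

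It remains to confirm the displayed formula for $\tilde L$ as a Schwarzian computation. Conjugating $f'^{-2}\pd_\xi^2-f'^{-3}f''\pd_\xi - u(f)$ by $f'^{-1/2}$ kills the first-order term and produces the $-\tfrac12 Sf$ correction.

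The main obstacle I expect is making step 3 rigorous: the algebra of pseudo-differential operators in $\pd_x$ with coefficients in $\C(x)$ must embed into the one in $\pd_\xi$, with $\pd_x^{-1}\mapsto\pd_\xi^{-1}f'$ well defined. I would handle this by noting that $f'^{-1}\pd_\xi$ has an inverse in the $\xi$-pseudo-differential algebra, namely $\pd_\xi^{-1}f'$. Then every symbol in $\pd_x$ of the form $\sum_{i\le N}a_i(x)\pd_x^i$ maps to a convergent formal series, because the order drops by one with each power of $\pd_x^{-1}$, so each coefficient of $\pd_\xi^{m}$ receives only finitely many contributions.
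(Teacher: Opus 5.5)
Your proposal is correct and follows essentially the same route as the paper. Both arguments transport the Lax equation through the pullback $x=f(\xi)$ followed by conjugation by $f'(\xi)^{-1/2}$, using that these operations respect products, fractional powers and the differential part $(\cdot)_+$. The differences are minor: you package the two operations into one homomorphism and justify its extension to $\pd_x^{-1}$ and the uniqueness of the square root more explicitly, and your argument covers $j=1$ uniformly, whereas the paper checks $j=1$ by a separate direct computation with $(\tilde L^{1/2})_+=f'(\xi)^{-1}\pd_\xi+\frac{f''(\xi)}{2f'(\xi)^2}$.
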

\begin{proof}
    Let $\varphi(\xi) =(f'(\xi))^{-1/2}$. For $j>1$ we have 
    \begin{equation}
    \pd_{t_j}(\tilde{L}) = \mathrm{Ad}_{\varphi(\xi)}((\pd_{t_j}(L))|_{x=f(\xi)}) =  \mathrm{Ad}_{\varphi(\xi)}(([(L^{\frac{2j-1}{2}})_+, L])|_{x=f(\xi)}) = [\mathrm{Ad}_{\varphi(\xi)}\big((L^{\frac{2j-1}{2}})_+|_{x=f(\xi)}\big), \tilde{L}].
    \end{equation}

    Note that  for any pseudo-differential operator $D$ we have $D_{+} |_{x=f(\xi)} = (D|_{x=f(\xi)})_{+}$ since 
    $$\left(\left(c(x)\pd_x^j\right)|_{x=f(\xi)}\right)_{+} = \left(c(f(\xi))\left(\frac{1}{f'(\xi)}\pd_\xi\right)^{j}\right)_{+} = \begin{cases}
        0,\hspace{82pt} j<0,\\
        c(f(\xi))(\frac{1}{f'(\xi)}\pd_\xi)^{j},\;\; j\geq 0
    \end{cases} = \left(c(x)\pd_x^j\right)_{+}\big|_{x=f(\xi)}.$$

    Moreover, for any pseudo-differential operator $D$ in variable $\xi$ and any non-zero rational function $\varphi(\xi)$ we have $\mathrm{Ad}_{\varphi(\xi)}(\xi)D_{+}= (\mathrm{Ad}_{\varphi(\xi)}D)_{+}$. This implies $\mathrm{Ad}_{\varphi(\xi)}((L^{\frac{2j-1}{2}})_+|_{x=f(\xi)}) = (\mathrm{Ad}_{\varphi(\xi)}(L^{\frac{2j-1}{2}}|_{x=f(\xi)}))_+$. 
    
    It remains to note that $L^{\frac{2j-1}{2}}|_{x=f(\xi)} = (L|_{x=f(\xi)})^{\frac{2j-1}{2}}$ and $\mathrm{Ad}_{\varphi(\xi)}((L|_{x=f(\xi)})^{\frac{2j-1}{2}}) = \tilde{L}^{\frac{2j-1}{2}}$.

    For $j = 1$ the statement follows by a direct computation using 
    $$(\tilde{L}^{\frac{1}{2}})_+ = \frac{1}{f'(\xi)}\pd_\xi +  \frac{f''(\xi)}{2f'(\xi)^2},\qquad \pd_{t_1}(\tilde{L}) = -u'(x)|_{x=f(\xi)}.$$
\end{proof}
Note that equation \eqref{eq:nmKdV}  is different from \eqref{eq:Kdv_H}. In \eqref{eq:Kdv_H} the operator $L$ has the form $\partial^2-u(x,t)$ while in \eqref{eq:nmKdV} the operator $\tilde L$ has the form $\frac{1}{P(x)}\pd^2-u(x,t)$.

Let $T(x)$ be an arbitrary polynomial and $f(x)$ be such that $f'(x)=T(x)$. Let $\mathcal{P}(T)$ be the population generated by the trivial critical point $(1,1)$ with respect to $\bs{T} = (T(x),T(x))$.

We apply Lemma \ref{lemma:KdV_var_change} to show that operators $L_0(\bs{y},\bs{T})$ corresponding to the points of the population $\mathcal{P}(T)$ satisfy KdV-type equations.

We fix the KdV times or the integration constants by connecting the points $\bs y\in\mathcal{P}(T)$ to Adler--Moser polynomials $\theta_n$ by 
$\bs y=(\theta_n(f(x)),\theta_{n+1}(f(x)))$, see 
Corollary \ref{cor:diag_pop_from_AM}.

\begin{cor}\label{cor:diag_sol_via_AM}
     Let $\bs{y}\in\mathcal{P}(T)$. Let $\hat{L} = L_0(\bs{y},\bs{T})$. Then for any $j \in \Zp$ we have
    \begin{equation}
    \partial_{t_{j}}(\hat{L}) = \Big[\Big(\hat{L}^{\frac{2j-1}{2}}\Big)_{+}, \hat{L}\Big].
    \end{equation}
\end{cor}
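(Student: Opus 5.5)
The plan is to show that $\hat L$ coincides with the operator $\tilde L$ obtained from an Adler--Moser operator by the change of variables $x=f(\xi)$, and then to apply Lemma \ref{lemma:KdV_var_change}.

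By Corollary \ref{cor:diag_pop_from_AM} we may write $\bs y=(\theta_n(f(\xi)),\theta_{n+1}(f(\xi)))$ with $f'=T$, where the KdV times enter through the Adler--Moser parameters as in Theorem \ref{thm:AM}. (If $\bs y$ has the other parity, i.e.\ $(\theta_{n+1}\circ f,\theta_n\circ f)$, the argument is the same after swapping roles, using $T_0=T_1=T$.) The claim to prove is the identity
\begin{equation}
L_0(\bs y,\bs T)=\mathrm{Ad}_{f'(\xi)^{-1/2}}\big(L_n|_{x=f(\xi)}\big),
\end{equation}
where $L_n=\pd_x^2-2\ln''(\theta_n)=L_0((\theta_{n+1},\theta_n),(1,1))$ as in \eqref{eqn:AM_ops}. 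Once this holds, Theorem \ref{thm:AM} says that $L_n$ satisfies \eqref{eq:Kdv_H}, and Lemma \ref{lemma:KdV_var_change} transfers this to the right-hand side, which gives the corollary.

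To verify the identity, I compare kernels and leading terms. Both sides have leading term $T(\xi)^{-2}\pd_\xi^2=P^{-1}\pd_\xi^2$, since $P=T_0T_1=T^2=f'^2$. Neither side has a first-order term: the left side by definition \eqref{eq:LfromYT}, and the right side by the explicit formula for $\tilde L$. A second-order operator of this shape is therefore determined by its kernel. The kernel of $L_0(\bs y,\bs T)$ contains
\begin{equation}
\phi_0=\frac{y_0}{\sqrt{T_0}\,y_1}=\frac{\theta_n(f)}{f'^{1/2}\,\theta_{n+1}(f)}.
\end{equation}
The kernel of $L_n$ contains $\theta_{n+1}/\theta_n$ (up to the choice of which pair is used). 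Substituting $x=f(\xi)$ and conjugating by $f'^{-1/2}$ gives the kernel element $f'^{-1/2}\,\theta_{n+1}(f)/\theta_n(f)$. The two candidate kernel elements are reciprocal in the $\theta$'s, so I will match the correct pair carefully: use $L_n=L_0((\theta_{n+1},\theta_n),(1,1))$, whose kernel contains $\theta_{n+1}/\theta_n$, against $L_0$ or $L_1$ of the transformed pair as appropriate. Lemma \ref{lemma:Darboux_switch} lets us pass between $L_0$ and $L_1$ and between $(y_0,y_1)$ and $(y_1,y_0)$, and since $T_0=T_1$ this matching is consistent.

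Alternatively, a direct computation works. One has $L_0=P^{-1}(\pd^2-\phi_0''/\phi_0)$. With $\phi_0=f'^{-1/2}g(f)$ and $g$ a kernel element of $\pd_x^2-u$, the chain rule gives
\begin{equation}
\frac{\phi_0''}{\phi_0}=f'^2u(f)-\tfrac12 Sf,
\end{equation}
which matches the formula for $\tilde L$. This is a routine Schwarzian computation.

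The main obstacle is bookkeeping rather than any single estimate. One must make sure that the time dependence (the integration constants) on the population side agrees with the Adler--Moser parametrization of Theorem \ref{thm:AM} after composing with $f$; this is handled by the convention fixed just before the corollary. One must also check the $j=1$ case and the index/parity convention for which pair of $\bs y$ is used, using Lemma \ref{lemma:Darboux_switch} and the symmetry $T_0=T_1$.
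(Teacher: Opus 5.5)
Your proposal is correct and follows the paper's own argument: identify $\hat L$ with $\mathrm{Ad}_{f'^{-1/2}}$ of a pulled-back Adler--Moser operator, then apply Theorem \ref{thm:AM} together with Lemma \ref{lemma:KdV_var_change}. The only difference is in the identification step, where the paper rewrites the factorization via $\frac{1}{T}(\pd\pm\ln'(y_1/y_0))=(\pd_x\pm\ln'_x(\theta_{n+1}/\theta_n))|_{x=f}$ instead of comparing kernels or computing the Schwarzian (equivalent); also, the reciprocal mismatch you noticed is just an index shift, since $\theta_n/\theta_{n+1}$ lies in the kernel of the Adler--Moser operator attached to $\theta_{n+1}$ because $\Wr(\theta_n,\theta_{n+2})=\theta_{n+1}^2$, so Lemma \ref{lemma:Darboux_switch} is not what resolves it.
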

\begin{proof}
    The statement follows by observing that there is $n\in \Zp$ such that

    \begin{multline}
    \hat{L} = \frac{1}{T(x)^2}\left(\pd_x + \ln'\left(\frac{y_1(x)}{\sqrt{T(x)}y_0(x)}\right)\right)\left(\pd_{x} - \ln'\left(\frac{y_1(x)}{\sqrt{T(x)}y_0(x)}\right)\right) = \\
    = \mathrm{Ad}_{\frac{1}{\sqrt{T(x)}}}\left[\frac{1}{T(x)}\left(\pd_{x} + \ln'\left(\frac{y_1(x)}{y_0(x)}\right)\right) \frac{1}{T(x)}\left(\pd_{x} - \ln'\left(\frac{y_1(x)}{y_0(x)}\right)\right)\right] =\\= \mathrm{Ad}_{\frac{1}{\sqrt{f'(x)}}}(L_0((\theta_{n+1},\theta_{n}),(1,1))|_{x\to f(x)}).
    \end{multline}
    Here $f(x)$ is such that $f'(x)=T(x)$. Therefore the corollary follows from Lemma \ref{lemma:KdV_var_change}.
\end{proof}

We do not know if Corollary \ref{cor:diag_sol_via_AM}  describes all rational solutions of \eqref{eq:nmKdV}.

We do not know if there are natural integrable hierarchies corresponding to other populations of critical points.

\medskip

{\bf Acknowledgments.\ }
We thank V. Tarasov for valuable comments and discussions. The first author thanks SISSA and BIMSA for hospitality.

The second author is partially supported by Simons Foundation grant number \#709444.

\end{document}